\documentclass[11pt]{article}

\usepackage[a4paper,
            margin=1in]{geometry}

\usepackage{amsmath,amssymb,amsthm,mathtools}
\usepackage{bm}
\usepackage{mathrsfs}

\usepackage{graphicx}
\usepackage{subcaption}
\usepackage{booktabs}
\usepackage{float}

\usepackage{algorithm}
\usepackage{algpseudocode}
\usepackage[ruled,vlined,linesnumbered,algo2e]{algorithm2e}

\usepackage{enumitem}

\usepackage{microtype}

\usepackage{xcolor}

\usepackage[
colorlinks=true,
linkcolor=blue,
citecolor=blue,
urlcolor=blue
]{hyperref}

\usepackage[nameinlink,capitalize]{cleveref}

\usepackage[toc,page]{appendix}

\usepackage{comment}

\numberwithin{equation}{section}

\theoremstyle{plain}
\newtheorem{theorem}{Theorem}[section]
\newtheorem{lemma}[theorem]{Lemma}
\newtheorem{proposition}[theorem]{Proposition}
\newtheorem{corollary}[theorem]{Corollary}

\theoremstyle{definition}
\newtheorem{definition}[theorem]{Definition}
\newtheorem{assumption}[theorem]{Assumption}

\theoremstyle{remark}
\newtheorem{remark}[theorem]{Remark}

\newcommand{\Omf}{\Omega_f}
\newcommand{\Omp}{\Omega_p}
\newcommand{\Gfp}{\Gamma_{fp}}

\newcommand{\Tn}{T_{\bm n}}
\newcommand{\Tt}{T_{\bm \tau}}
\newcommand{\Dt}[1]{D_t#1}

\newcommand{\norm}[2]{\left\|#1\right\|_{#2}}

\newcommand{\inner}[3]{\left(#1,#2\right)_{#3}}
\newcommand{\dualp}[3]{\left\langle#1,#2\right\rangle_{#3}}

\title{
A Second-Order Monolithic Scheme for the Coupled
Stokes--Biot Model Using Total Pressure
}

\author{
Talal Alshehri$^{1}$
\and
Mingchao Cai$^{1}$
\and
Xiaoqin Shen$^{2}$
}

\date{}

\begin{document}

\maketitle 

\begin{center}
$^{1}$Department of Mathematics, Morgan State University,\\
1700 East Cold Spring Lane, Baltimore, MD 21251, USA

\vspace{0.6em}

$^{2}$School of Sciences, Xi'an University of Technology,\\
Xi'an 710054, China
\end{center}


\begin{abstract}
	\sloppy
	We develop and analyze a second-order,
	fully implicit, monolithic time-discretization
	for the coupled time-dependent Stokes and
	quasi-static Biot system. The Biot subsystem
	is formulated in a three-field total-pressure
	formulation, in which the total pressure is
	introduced as an additional unknown together
	with the solid displacement and pore pressure.
	This reformulation improves robustness in
	nearly incompressible regimes and suppresses volumetric locking.
	All variables are discretized in time using
	the BDF2 scheme, and the coupled problem is
	solved monolithically without operator
	splitting. Discrete energy stability is
	established using the BDF2 $G$-stability
	identity. The resulting estimates exhibit
	robustness with respect to the Lam\'e
	parameter, Biot--Willis coefficient, and
	storage coefficient.
	A consistency--stability argument is then
	used to derive second-order convergence in
	time together with optimal-order convergence
	in space.
	Numerical experiments confirm the theoretical
	results and demonstrate second-order temporal
	accuracy in the corresponding energy norms.
\end{abstract}

\noindent\textbf{Keywords:}
Time-dependent Stokes--Biot system;
fluid--poroelastic interaction;
total-pressure formulation;
monolithic finite element method;
Backward Euler scheme;
stability;
error analysis;
Beavers--Joseph--Saffman interface condition.

\pagenumbering{arabic}


\section{Introduction}
\label{sec:intro}

Fluid--poroelastic structure interaction (FPSI) models describe the
coupled dynamics of a viscous incompressible fluid and a neighboring
saturated poroelastic medium separated by a permeable interface. Such
systems arise in a wide range of applications, including hydrocarbon
recovery, groundwater management, industrial filtration, membrane
separation, and biomedical processes such as aqueous humor flow,
cerebrospinal fluid motion, and cartilage mechanics under load
\cite{RuizBaier2022}. Mathematically, the model couples the Stokes
equations in the fluid region with Biot's consolidation equations in
the poroelastic region through interface conditions enforcing mass
conservation, balance of normal stresses, and a tangential slip law of
Beavers--Joseph type \cite{BukacYotov2015,Ambartsumyan2018}.

A major difficulty in the numerical approximation of fluid--poroelastic interaction (FPSI) systems is the volumetric locking phenomenon. In the classical two-field displacement--pressure formulation of the Biot equations, standard finite element discretizations may lose robustness when the Lam\'e parameter becomes large, corresponding to a nearly incompressible porous skeleton, or when the permeability is small. Both regimes are frequently encountered in applications: biological soft tissues are typically nearly incompressible, while fine-grained geological materials often exhibit very low permeability. In such situations, locking manifests itself through spurious pressure oscillations and an artificially stiff displacement response, and the resulting loss of accuracy generally cannot be eliminated by mesh refinement alone.

To address these locking effects, Lee, Mardal, and Winther \cite{Lee2017} introduced a three-field formulation of the Biot system \cite{Lee2017, HeGuoFeng2024, gu2023iterative, gu2025convergence, OyarzuaRuizBaier2016} in which the total pressure is treated as an additional primary unknown together with the solid displacement and pore pressure. This reformulation yields a parameter-robust saddle-point structure and suppresses locking at both the continuous and discrete levels \cite{Lee2017, gu2023iterative, OyarzuaRuizBaier2016}. Ruiz-Baier, Taffetani, Westermeyer, and Yotov \cite{RuizBaier2022} subsequently extended this framework to coupled Stokes--Biot systems and developed a five-field mixed finite element formulation based on backward Euler time discretization. Time discretization methods for FPSI systems have also attracted significant attention. Monolithic backward Euler schemes were analyzed in \cite{CesmeliogluChidyagwai2020,RuizBaier2022}, while interface formulations based on Lagrange multipliers and Nitsche coupling were investigated in \cite{Ambartsumyan2018,BukacYotov2015}. Partitioned and loosely coupled algorithms, in which the fluid and poroelastic subproblems are solved separately, were studied in \cite{OyekoleBukac2020}. Although such approaches reduce computational complexity, they introduce interface splitting errors that must be carefully controlled. Higher-order time discretizations are considerably more challenging because of the strong fluid--poroelastic coupling, the coexistence of multiple physical scales, and the differential--algebraic structure of the coupled problem. Discontinuous Galerkin methods for coupled Stokes--Biot systems were proposed in \cite{ZhouLiChen2024}, while combined discontinuous and continuous Galerkin discretizations for fractured poroelastic media on polytopic meshes were developed in \cite{ZhouLiZhangChen2026}. More recently, Guo and Li \cite{GuoLi2026} developed several algorithms for the coupled system of time-dependent Stokes equations and a fully dynamic Biot model in the classical two-field Biot formulation and established stability and optimal error estimates. However, their stability analysis requires a time-step restriction depending on the Beavers--Joseph--Saffman friction parameter and interface trace constants, while the convergence constants depend on the Lam\'e parameter through the coercivity of the elastic bilinear form. Consequently, the analysis becomes increasingly restrictive in the nearly incompressible regime, precisely where a total-pressure formulation is most advantageous.

In this paper, we develop and analyze a fully implicit monolithic BDF2
scheme for the coupled Stokes--Biot system formulated with total
pressure. All interface conditions are treated implicitly at each time
step, and the coupled problem is solved as a single linear system,
without operator splitting or interface extrapolation. Hence, no
splitting error is introduced.
The main contributions are as follows. First, we prove 
energy stability using the classical BDF2 $G$-stability identity
\cite{Heywood1990}. The resulting estimate holds for arbitrary time-step sizes and requires no CFL-type or parameter-dependent restriction.
Second, the analysis exhibits robustness with respect to the Lam\'e parameter, Biot--Willis coefficient, and storage coefficient in the sense that these parameters enter the estimates only through nonnegative energy contributions and not through parameter-dependent stability restrictions. This parameter robustness
is achieved through the total-pressure variable and the associated combined pressure structure, which yields a uniformly bounded energy in both compressible and nearly incompressible regimes. Third, the fully discrete formulation is analyzed
within a mixed variational framework based on
discrete inf--sup stability. Finally, we establish
second-order convergence in time and optimal-order
convergence in space.  
The resulting error estimate is of order
$O(\Delta t^2+h^k)$ in the corresponding energy
norms, where $k$ depends on the polynomial degree
of the finite element spaces; see
Theorem~\ref{thm:BDF2_convergence}.

The paper is organized as follows.
Section~\ref{sec:model} introduces the coupled
Stokes--Biot model and the total-pressure
reformulation. Section~\ref{sec:weak}
develops the weak formulation and presents its
operator structure. The monolithic BDF2
time-discretization is introduced in
Section~\ref{sec:BDF2}. Stability and related
analytical properties are established in
Section~\ref{sec:analysis}, while
Section~\ref{sec:convergence} presents the fully discrete
finite element approximation together with the
convergence analysis. Numerical experiments
illustrating temporal and spatial accuracy,
as well as robustness with respect to physical
parameters, are reported in
Section~\ref{sec:numerical}. Final conclusions
are given in the last section.
 
\section{Mathematical model}
\label{sec:model}
We consider a coupled fluid--poroelastic system posed on a bounded
Lipschitz domain $\Omega \subset \mathbb{R}^d$, $d \in \{2,3\}$,
which is decomposed into two non-overlapping subdomains:
the fluid region $\Omf$ and the poroelastic region $\Omp$.
These two subdomains are separated by the interface
\[
  \Gfp := \partial \Omf \cap \partial \Omp .
\]
The evolution is studied on the time interval $(0,T]$.

In the fluid region $\Omf$, the motion is described by the
time-dependent Stokes equations. Let $\bm{u}_f$ and $p_f$ denote,
respectively, the fluid velocity and the fluid pressure. Then the
governing equations read
\begin{subequations}\label{eq:stokes}
\begin{align}
  \rho_f \,\partial_t \bm{u}_f
  - \nabla \!\cdot\! \bm{\sigma}_f(\bm{u}_f,p_f) &= \bm{f}_f
  && \text{in } \Omf \times (0,T], \\
 - \nabla \!\cdot\! \bm{u}_f &= S_f
  && \text{in } \Omf \times (0,T].
\end{align}
\end{subequations}
Here, $\rho_f>0$ denotes the fluid density, $\mu_f>0$ is the dynamic
viscosity, $\bm{f}_f$ is a body force term, and $S_f$ is a volumetric
fluid source term. The associated Cauchy stress tensor is given by
\[
  \bm{\sigma}_f(\bm{u}_f,p_f)
  := 2\mu_f \bm{D}(\bm{u}_f) - p_f \mathbb{I}.
\]
Here and hereafter, for any vector field $\bm{v}$, we denote the symmetric gradient by
\[
  \bm{D}(\bm{v}) := \frac{1}{2}\bigl(\nabla \bm{v} + \nabla \bm{v}^T\bigr).
\]

In the poroelastic region $\Omp$, the solid deformation and pore-fluid flow are governed by Biot's consolidation model. Let $\bm{\eta}$ denote the solid displacement and $p_p$ the pore pressure. In the classical displacement--pressure formulation, the model reads
\begin{subequations}\label{eq:biot_classical}
\begin{align}
 -\,\nabla \!\cdot\! \bm{\sigma}_e (\bm{\eta}, p_p)
  &= \bm{f}_s
  && \text{in } \Omp \times (0,T], \\
  \partial_t\!\bigl(s_0 p_p + \alpha \nabla \!\cdot\! \bm{\eta}\bigr)
  - \nabla \!\cdot\! \Bigl(\frac{\kappa}{\mu_f} \nabla p_p\Bigr)
  &= S_p
  && \text{in } \Omp \times (0,T].
\end{align}
\end{subequations}
The first equation expresses conservation of linear momentum in the poroelastic medium, while the second represents conservation of pore-fluid mass. Here, $s_0 \ge 0$ is the storage coefficient, $\kappa>0$ is a scalar permeability, 
$(\lambda_p,\mu_p)$ are the Lam\'e coefficients of the solid skeleton,
\[
  \lambda_p = \frac{E\,\nu_p}{(1+\nu_p)(1-2\nu_p)},
  \qquad
  \mu_p = \frac{E}{2(1+\nu_p)},
\]
with Young's modulus $E$ and Poisson's ratio $\nu_p$. The effective stress tensor is
\[
 \bm{\sigma}_e (\bm{\eta}, p_p)
 := 2\mu_p \bm{D}(\bm{\eta})
  + \lambda_p (\nabla \!\cdot\! \bm{\eta}) \bm{I}
  - \alpha p_p \bm{I},
\]
where $\alpha \in (0, 1]$ is the Biot--Willis coefficient.

We introduce the total pressure
\begin{equation}\label{eq:xi_definition}
  \xi := \alpha p_p - \lambda_p \nabla \!\cdot\! \bm{\eta}.
\end{equation}
Then the Biot system is equivalently written in the three-field form \cite{Lee2017, HeGuoFeng2024, gu2023iterative, RuizBaier2022} for
$(\bm{\eta},\xi,p_p)$:
\begin{subequations}\label{eq:biot_total}
\begin{align}
  -\,\nabla \!\cdot\! \bm{\sigma}_p(\bm{\eta}, \xi)
  &= \bm{f}_s
  && \text{in } \Omp \times (0,T], \\
  -\,\nabla \!\cdot\! \bm{\eta}
  + \frac{\alpha}{\lambda_p} p_p
  - \frac{1}{\lambda_p} \xi
  &= 0
  && \text{in } \Omp \times (0,T], \\
  \partial_t\!\left[c_p p_p
  - \frac{\alpha}{\lambda_p}\xi\right]
  - \nabla \!\cdot\! \Bigl(\frac{\kappa}{\mu_f} \nabla p_p\Bigr)
  &= S_p
  && \text{in } \Omp \times (0,T].
\end{align}
\end{subequations}
Here and throughout the paper, we denote 
$$
c_p := s_0 + \dfrac{\alpha^2}{\lambda_p}.
$$
The poroelastic Cauchy stress tensor is
\[
  \bm{\sigma}_p(\bm{\eta}, \xi)
  := 2\mu_p \bm{D}(\bm{\eta}) - \xi \bm{I}.
\]
This reformulation yields a parameter-robust structure in which the
Lam\'e parameter $\lambda_p$ is properly balanced. It is particularly
suitable for nearly incompressible materials and helps avoid volumetric
locking. Therefore, we use the three-field formulation
\eqref{eq:biot_total} throughout this work.


The coupling between the fluid and poroelastic regions is enforced by
interface conditions on $\Gfp$. 
\noindent
Let $\bm{n}:=\bm{n}_f$ denote the unit normal vector on $\Gfp$ pointing
from $\Omf$ into $\Omp$, so that $\bm{n}_p=-\bm{n}$.
Let $\{\bm{\tau}_j\}_{j=1}^{d-1}$ be an orthonormal basis of tangent
vectors on $\Gfp$. For any vector field $\bm{v}$, we define the normal
and tangential traces by
\[
  \Tn \bm{v} := \bm{v}\cdot\bm{n},
  \qquad
  \Tt \bm{v} := \bm{v}\cdot\bm{\tau}_j,
  \quad j=1,\ldots,d-1.
\]
\noindent
On the interface $\Gfp \times (0,T)$, we impose continuity of normal
fluxes, balance of traction, balance of normal stress, and the
Beavers--Joseph--Saffman tangential slip condition:
\begin{subequations}\label{eq:interface_conditions}
\begin{align}
  \Tn \bm{u}_f
  &=
  \Tn\!\Big(\partial_t \bm{\eta}
  - \frac{\kappa}{\mu_f} \nabla p_p \Big),
  && \text{on } \Gfp \times (0,T),
  \label{eq:ic_mass} \\[4pt]
  \bm{\sigma}_f(\bm{u}_f,p_f)\bm{n}
  &=
  \bm{\sigma}_p(\bm{\eta},\xi)\bm{n},
  && \text{on } \Gfp \times (0,T),
  \label{eq:ic_traction} \\[4pt]
  -\,\Tn\bigl(\bm{\sigma}_f(\bm{u}_f,p_f)\,\bm{n}\bigr)
  &=
  p_p,
  && \text{on } \Gfp \times (0,T),
  \label{eq:ic_normal} \\[4pt]
  -\,\Tt\!\big(\bm{\sigma}_f(\bm{u}_f,p_f)\bm{n}\big)
  &=
  \frac{\gamma \mu_f}{\sqrt{\kappa}}\,
  \Tt\!\big(\bm{u}_f - \partial_t \bm{\eta}\big),
  && \text{on } \Gfp \times (0,T).
  \label{eq:ic_bjs}
\end{align}
\end{subequations}
Here, $\gamma\ge 0$ is the Beavers–Joseph–Saffman constant. 

The exterior boundaries are decomposed as
\[
\partial\Omf \setminus \Gfp = \Gamma_f^{D} \cup \Gamma_f^{N},
\qquad
\partial\Omp \setminus \Gfp = \Gamma_p^{D} \cup \Gamma_p^{N},
\]
and we denote by $\Gamma_p^{p} \subset \partial\Omp \setminus \Gfp$
the portion where the pore pressure is prescribed. We assume that
$|\Gamma_f^{D}|>0$ and $|\Gamma_p^{D}|>0$.
\noindent
On the fluid boundary, we impose
\begin{subequations}\label{eq:fluid_bc}
\begin{align}
  \bm{u}_f &= \bm{0},
  && \text{on } \Gamma_f^{D} \times (0,T], \\
  \bm{\sigma}_f(\bm{u}_f,p_f)\bm{n}_f
  &= \bm{0},
  && \text{on } \Gamma_f^{N} \times (0,T].
\end{align}
\end{subequations}
\noindent
On the poroelastic boundary, we prescribe
\begin{subequations}\label{eq:poro_bc}
\begin{align}
  \bm{\eta} &= \bm{0},
  && \text{on } \Gamma_p^{D} \times (0,T], \\
  \bm{\sigma}_p(\bm{\eta},\xi)\bm{n}_p
  &= \bm{0},
  && \text{on } \Gamma_p^{N} \times (0,T], \\
  p_p &= 0,
  && \text{on } \Gamma_p^{p} \times (0,T].
\end{align}
\end{subequations}
Here, we assume that $\Gamma_p^{p}= \partial \Omega_p /\  \Gamma_{fp} $ and $|\Gamma_p^{p}| >0$. Moreover, we impose homogeneous boundary conditions for ease of discussion. The initial data are prescribed as follows:
\begin{equation}\label{eq:initial_conditions}
\bm{u}_f(0) = \bm{u}_f^0,
\qquad
\bm{\eta}(0) = \bm{\eta}^0,
\qquad
p_p(0) = p_p^0.
\end{equation}
The initial total pressure $\xi(0)$ is determined from the algebraic
relation \eqref{eq:xi_definition}.

\section{Weak formulation}
\label{sec:weak}
\noindent
We introduce the spaces
\begin{align*}
& \bm{V}_f=\left\{\bm{v}_f \in \bm{H}^1\left(\Omega_f\right):\left.\bm{v}_f\right|_{\Gamma_f^D}=\bm{0}\right\}, \quad
\bm{V}_p=\left\{\bm{v}_p \in \bm{H}^1\left(\Omega_p\right):\left.\bm{v}_p\right|_{\Gamma_p^D}=\bm{0}\right\}, \\
& W_p=\left\{w_p \in H^1\left(\Omega_p\right):\left.w_p\right|_{\Gamma_p^{p}}=0\right\},
\qquad
Q_f:=L^2(\Omega_f), \quad
Q_p:=L^2(\Omega_p),
\end{align*}
equipped with their standard norms. Here, $\bm{H}^1$ denotes vector-valued $H^1$ space. The corresponding product space is
$
  \bm{H} := \bm{V}_f \times \bm{V}_p \times Q_f \times Q_p \times W_p.
$
We next derive the weak formulation associated with the coupled Stokes--Biot system. 
For clarity of notation, we use bold symbols to denote vector-valued quantities. 
We denote by $(\cdot,\cdot)_\Omega$ the $L^2(\Omega)$ inner product, and by 
$\langle \cdot,\cdot \rangle_\Gamma$ the duality pairing on a boundary or interface $\Gamma$. 
The norms $\|\cdot\|_{0,\Omega}$ and $\|\cdot\|_{s,\Omega}$ represent the standard $L^2(\Omega)$ and $H^s(\Omega)$ norms, respectively.

Multiplying the equations in \eqref{eq:stokes} and \eqref{eq:biot_total} by appropriate test functions 
$(\boldsymbol{v}_f, \boldsymbol{v}_p, w_f, w_s, w_p) \in \boldsymbol{H}$, integrating over the corresponding subdomains, and applying integration by parts, we obtain the weak formulation.
\begin{equation}\label{eq:weak_form}
\begin{aligned}
&\rho_f(\partial_t\bm{u}_f, \bm{v}_f)_{\Omega_f}
+2\mu_f(\bm{D}(\bm{u}_f), \bm{D}(\bm{v}_f))_{\Omega_f}
-(p_f, \nabla\!\cdot\!\bm{v}_f)_{\Omega_f}
-\langle\boldsymbol{\sigma}_f\bm{n}, \bm{v}_f\rangle_{\Gamma_{fp}}
= (\bm{f}_f, \bm{v}_f)_{\Omega_f}, \\
&2\mu_p(\bm{D}(\bm{\eta}), \bm{D}(\bm{v}_p))_{\Omega_p}
-(\xi, \nabla\!\cdot\!\bm{v}_p)_{\Omega_p}
+\langle\boldsymbol{\sigma}_p\bm{n}, \bm{v}_p\rangle_{\Gamma_{fp}}
= (\bm{f}_s, \bm{v}_p)_{\Omega_p}, \\
&-(\nabla\!\cdot\!\bm{u}_f,w_f)_{\Omf}
= (S_f,w_f)_{\Omf}, \\
&-(\nabla\!\cdot\!\bm{\eta}, w_s)_{\Omega_p}
+ \frac{1}{\lambda_p}(\alpha p_p-\xi, w_s)_{\Omega_p}
= 0, \\
&\partial_t\!\left[
c_p (p_p, w_p)_{\Omega_p}
- \frac{\alpha}{\lambda_p}(\xi, w_p)_{\Omega_p}
\right] 
+ \frac{\kappa}{\mu_f}(\nabla p_p, \nabla w_p)_{\Omega_p}
+ \frac{\kappa}{\mu_f} \left\langle \nabla p_p \cdot \bm{n}, w_p \right\rangle_{\Gamma_{fp}}
= (S_p, w_p)_{\Omega_p}.
\end{aligned}
\end{equation}
\noindent
We now rewrite the interface terms by using the coupling conditions
\eqref{eq:ic_mass}--\eqref{eq:ic_bjs}.
\noindent
First, combining traction continuity
\eqref{eq:ic_traction}, the normal stress condition
\eqref{eq:ic_normal}, and the Beavers--Joseph--Saffman condition
\eqref{eq:ic_bjs}, we obtain
\begin{equation}\label{eq:stokes_interface}
\begin{aligned}
-\langle \boldsymbol{\sigma}_f\bm{n}, \bm{v}_f\rangle_{\Gamma_{fp}}
&=
\langle p_p, \Tn \bm{v}_f\rangle_{\Gamma_{fp}}
+ \frac{\gamma\mu_f}{\sqrt{\kappa}}
\langle \Tt(\bm{u}_f - \partial_t\bm{\eta}),
        \Tt \bm{v}_f\rangle_{\Gamma_{fp}}.
\end{aligned}
\end{equation}
\noindent
Next, using again \eqref{eq:ic_traction}, together with
\eqref{eq:ic_normal} and \eqref{eq:ic_bjs}, we rewrite the
poroelastic-side boundary contribution as
\begin{equation}\label{eq:biot_interface}
\begin{aligned}
\langle \boldsymbol{\sigma}_p\bm{n}, \bm{v}_p\rangle_{\Gamma_{fp}}
&=
\langle \boldsymbol{\sigma}_f\bm{n}, \bm{v}_p\rangle_{\Gamma_{fp}} \\
&=
-\langle p_p, \Tn \bm{v}_p\rangle_{\Gamma_{fp}}
- \frac{\gamma\mu_f}{\sqrt{\kappa}}
\langle \Tt(\bm{u}_f - \partial_t\bm{\eta}),
        \Tt \bm{v}_p\rangle_{\Gamma_{fp}}.
\end{aligned}
\end{equation}
\noindent
Finally, from the mass conservation condition \eqref{eq:ic_mass}, we obtain
\begin{equation}\label{eq:biot_mass_interface}
\begin{aligned}
-\frac{\kappa}{\mu_f} \left\langle \nabla p_p \cdot \bm{n}, w_p \right\rangle_{\Gamma_{fp}}
=
\left\langle \Tn(\bm{u}_f - \partial_t\bm{\eta}), w_p \right\rangle_{\Gamma_{fp}}.
\end{aligned}
\end{equation}

\noindent
Substituting \eqref{eq:stokes_interface}--\eqref{eq:biot_mass_interface}
into \eqref{eq:weak_form}, we arrive at the following coupled weak formulation: find $(\bm{u}_f, \bm{\eta}, p_f, \xi, p_p) \in \bm{H}$
such that, for all
$(\bm{v}_f, \bm{v}_p, w_f, w_s, w_p) \in \bm{H}$ and for all
$t \in (0, T]$,
\begin{subequations}\label{eq:weak_form_combined}
\begin{align}
&\rho_f(\partial_t\bm{u}_f,\bm{v}_f)_{\Omf}
+2\mu_f(\bm{D}(\bm{u}_f),\bm{D}(\bm{v}_f))_{\Omf}
-(p_f,\nabla\!\cdot\!\bm{v}_f)_{\Omf} \notag\\
&\quad
+ \frac{\gamma\mu_f}{\sqrt{\kappa}}
\langle \Tt(\bm{u}_f - \partial_t\bm{\eta}), \Tt\bm{v}_f \rangle_{\Gamma_{fp}}
+ \langle p_p, \Tn\bm{v}_f \rangle_{\Gamma_{fp}}
= (\bm{f}_f,\bm{v}_f)_{\Omf}, \\
&2\mu_p(\bm{D}(\bm{\eta}),\bm{D}(\bm{v}_p))_{\Omp}
-(\xi,\nabla\!\cdot\!\bm{v}_p)_{\Omp}
- \langle p_p, \Tn\bm{v}_p \rangle_{\Gamma_{fp}} \notag\\
&\quad
+ \frac{\gamma\mu_f}{\sqrt{\kappa}}
\langle \Tt(\partial_t\bm{\eta} - \bm{u}_f), \Tt\bm{v}_p \rangle_{\Gamma_{fp}}
= (\bm{f}_s,\bm{v}_p)_{\Omp}, \\
&-(\nabla\!\cdot\!\bm{u}_f,w_f)_{\Omf}
= (S_f,w_f)_{\Omf}, \\
&-(\nabla\!\cdot\!\bm{\eta},w_s)_{\Omp}
		+\frac{1}{\lambda_p}
		(\alpha p_p -\xi, w_s)_{\Omp}
		=0,     \\
&\left(
\partial_t\!\Bigl[
c_p p_p
- \frac{\alpha}{\lambda_p}\xi
\Bigr],
w_p
\right)_{\Omega_p} 
- \langle \Tn(\bm{u}_f - \partial_t\bm{\eta}), w_p \rangle_{\Gamma_{fp}}
+ \frac{\kappa}{\mu_f}(\nabla p_p,\nabla w_p)_{\Omp}
= (S_p,w_p)_{\Omp}.
\end{align}
\end{subequations}

\subsection{DAE system: operator formulation}

We next recast the coupled weak formulation
\eqref{eq:weak_form_combined} in an abstract operator form.
With the ordering
\[
\bm{U}(t)
=
[\bm{u}_f(t),\,
 \bm{\eta}(t),\,
 p_f(t),\,
 \xi(t),\,
 p_p(t)]^T,
\]
the system can be written as
\begin{equation}\label{eq:DAE_sys}
  \mathcal{N}\,\partial_t \bm{U}(t)
  + \mathcal{M}\,\bm{U}(t)
  = \mathcal{F}(t).
\end{equation}

Here, $\mathcal{N}$ contains all terms involving time derivatives, whereas
$\mathcal{M}$ contains the elliptic, algebraic-constraint, and nondifferential
interface contributions. The block operators are
\begin{equation}\label{eq:DAE_operators}
\mathcal{N}
=
\begin{bmatrix}
 M_f
  & -\mathcal{B}_{fs}^{\bm{\tau},T}
  & 0 & 0 & 0
\\
0
  & \mathcal{A}_s^{\bm{\tau}}
  & 0 & 0 & 0
\\
0 & 0 & 0 & 0 & 0
\\
0 & 0 & 0 & 0 & 0
\\
0
  & \mathcal{B}_{sp}^{\bm{n}}
  & 0
  & -\mathcal{B}_{p \xi}
  & M_p
\end{bmatrix},
\qquad
\mathcal{M}
=
\begin{bmatrix}
\widehat{\mathcal{A}}_f
  & 0
  & -\mathcal{B}_{f}^{T}
  & 0
  & \mathcal{B}_{fp}^{\bm{n},T}
\\
-\mathcal{B}_{fs}^{\bm{\tau}}
  & \mathcal{A}_s^{\mathrm{elas}}
  & 0
  & -\mathcal{B}_{s\xi}^{T}
  & -\mathcal{B}_{sp}^{\bm{n},T}
\\
-\mathcal{B}_{f}
  & 0 & 0 & 0 & 0
\\
0
  & -\mathcal{B}_{s\xi}
  & 0
  & -M_\xi
  & \mathcal{B}_{p \xi}^T
\\
-\mathcal{B}_{fp}^{\bm{n}}
  & 0 & 0 & 0
  & \mathcal{A}_p^{\mathrm{Darcy}}
\end{bmatrix}.
\end{equation}
Recalling the definition of $c_p$,
we see that the constituent operators are defined by the following bilinear forms:
\[
\begin{array}{ll}
(M_f\bm u_f,\bm v_f)
:=
\rho_f(\bm u_f,\bm v_f)_{\Omega_f},
&
(\mathcal{A}_s^{\bm\tau}\bm\eta,\bm v_p)
:=
\dfrac{\gamma\mu_f}{\sqrt{\kappa}}
\langle \Tt\bm\eta,\Tt\bm v_p\rangle_{\Gamma_{fp}},
\\[10pt]
(\mathcal{B}_{fs}^{\bm\tau,T}\bm\eta,\bm v_f)
:=
\dfrac{\gamma\mu_f}{\sqrt{\kappa}}
\langle \Tt\bm\eta,\Tt\bm v_f\rangle_{\Gamma_{fp}},
&
(\mathcal{B}_f^Tp_f,\bm v_f)
:=
(p_f,\nabla\!\cdot\!\bm v_f)_{\Omega_f}. 
\\[10pt]
(\mathcal{B}_{sp}^{\bm n}\bm\eta,w_p)
:=
\langle \Tn\bm\eta,w_p\rangle_{\Gamma_{fp}},
&
(M_pp_p,w_p)
:=
(c_p p_p,w_p)_{\Omega_p},
\\[10pt]
(\widehat{\mathcal{A}}_f\bm u_f,\bm v_f)
:=
2\mu_f
(\bm D(\bm u_f),\bm D(\bm v_f))_{\Omega_f}
+
\dfrac{\gamma\mu_f}{\sqrt{\kappa}}
\langle \Tt\bm u_f,\Tt\bm v_f\rangle_{\Gamma_{fp}},
&
(\mathcal{B}_f\bm u_f,w_f)
:=
(\nabla\!\cdot\!\bm u_f,w_f)_{\Omega_f},
\\[10pt]
(\mathcal{A}_s^{\mathrm{elas}}\bm\eta,\bm v_p)
:=
2\mu_p
(\bm D(\bm\eta),\bm D(\bm v_p))_{\Omega_p},
&
(\mathcal{B}_{s\xi}\bm\eta,w_s)
:=
(\nabla\!\cdot\!\bm\eta,w_s)_{\Omega_p},
\\[10pt]
(\mathcal{A}_p^{\mathrm{Darcy}}p_p,w_p)
:=
\dfrac{\kappa}{\mu_f}
(\nabla p_p,\nabla w_p)_{\Omega_p},
&
(\mathcal{B}_{s\xi}^{T}\xi,\bm v_p)
:=
(\xi,\nabla\!\cdot\!\bm v_p)_{\Omega_p},
\\[10pt]
(\mathcal{B}_{fp}^{\bm n,T}p_p,\bm v_f)
:=
\langle p_p,\Tn\bm v_f\rangle_{\Gamma_{fp}},
&
(\mathcal{B}_{fp}^{\bm n}\bm u_f,w_p)
:=
\langle \Tn\bm u_f,w_p\rangle_{\Gamma_{fp}},
\\[10pt]
(\mathcal{B}_{fs}^{\bm\tau}\bm u_f,\bm v_p)
:=
\dfrac{\gamma\mu_f}{\sqrt{\kappa}}
\langle \Tt\bm u_f,\Tt\bm v_p\rangle_{\Gamma_{fp}},
&
(\mathcal{B}_{p \xi}\xi,w_p)
:=
\dfrac{\alpha}{\lambda_p}
(\xi,w_p)_{\Omega_p},
\\[10pt]
(M_\xi\xi,w_s)
:=
\dfrac{1}{\lambda_p}
(\xi,w_s)_{\Omega_p},
&
(\mathcal{B}_{sp}^{\bm n,T}p_p,\bm v_p)
:=
\langle p_p,\Tn\bm v_p\rangle_{\Gamma_{fp}}.
\end{array}
\]
The superscript $T$ denotes the adjoint operator with respect to the
corresponding volume or interface duality pairing. In particular,
\[
\mathcal{B}_{fs}^{\bm\tau,T}
=
\bigl(\mathcal{B}_{fs}^{\bm\tau}\bigr)^T,
\qquad
\mathcal{B}_{fp}^{\bm n,T}
=
\bigl(\mathcal{B}_{fp}^{\bm n}\bigr)^T,
\]
\[
\mathcal{B}_{sp}^{\bm n,T}
=
\bigl(\mathcal{B}_{sp}^{\bm n}\bigr)^T,
\qquad
\mathcal{B}_{p \xi}^T
=
(\mathcal{B}_{p \xi})^T.
\]
The right-hand side is
\[
\mathcal{F}(t)
=
[\bm f_f(t),\bm f_s(t),S_f(t),0,S_p(t)]^T,
\]
where each component is identified with its corresponding load functional.

The representation \eqref{eq:DAE_sys} separates the differential and algebraic parts of the coupled system. The fluid pressure $p_f$ is
determined by the incompressibility constraint, while $\xi$ has no
independent evolution equation but enters the differentiated storage
combination $c_p p_p-\frac{\alpha}{\lambda_p}\xi$. 
Accordingly, the singular operator $\mathcal N$ reflects the
differential--algebraic structure of the problem. This formulation is
used to display the monolithic block coupling, whereas the stability
and convergence analyses are carried out directly at the variational
level.

\section{A monolithic BDF2 scheme}
\label{sec:BDF2}
We now introduce a fully coupled, second-order time discretisation of
the differential--algebraic system \eqref{eq:DAE_sys} using the
backward differentiation formula of order two (BDF2).

Let $0 = t^0 < t^1 < \cdots < t^N = T$ be a uniform partition of
$[0,T]$ with time step $\Delta t = T/N$ and $t^n = n\Delta t$.
For a sequence $\{z^n\}_{n\ge 0}$, the BDF2 difference operator is
\begin{equation}\label{eq:BDF2_difference_operator}
  D_t z^{n+1}
  := \frac{3z^{n+1} - 4z^n + z^{n-1}}{2\Delta t},
  \qquad n \ge 1.
\end{equation}
The scheme is initialized at $t^1$ by the backward Euler step $\delta_t z^1 := (z^1 - z^0)/\Delta t$.
We assume that the initial data and the backward Euler startup step satisfy
the algebraic constraints \eqref{eq:weak_form_combined} at both
$t^0$ and $t^1$. Namely, 
the incompressibility condition and the
total-pressure relation, at the initial
time levels. 

For $n\ge 1$, given previous states
$(\bm{u}_f^n,\bm{\eta}^n,p_f^n,\xi^n,p_p^n)$ and
$(\bm{u}_f^{n-1},\bm{\eta}^{n-1},p_f^{n-1},\xi^{n-1},p_p^{n-1})$,
we seek
\[
(\bm{u}_f^{n+1},\bm{\eta}^{n+1},p_f^{n+1},\xi^{n+1},p_p^{n+1})
\in \bm{V}_f \times \bm{V}_p \times Q_f \times Q_p \times W_p
\]
such that, for all
$(\bm{v}_f,\bm{v}_p,w_f,w_s,w_p)
\in \bm{V}_f \times \bm{V}_p \times Q_f \times Q_p \times W_p$:
\begin{subequations}\label{eq:BDF2_monolithic_weak}
\begin{align}
&\rho_f \inner{D_t\bm{u}_f^{n+1}}{\bm{v}_f}{\Omf}
+ \inner{2\mu_f\bm{D}(\bm{u}_f^{n+1})}{\bm{D}(\bm{v}_f)}{\Omf}
- \inner{p_f^{n+1}}{\nabla\!\cdot\!\bm{v}_f}{\Omf}
\notag\\
&\quad
+ \frac{\gamma\mu_f}{\sqrt{\kappa}}
  \dualp{\Tt(\bm{u}_f^{n+1} - D_t\bm{\eta}^{n+1})}
        {\Tt\bm{v}_f}{\Gfp}
+ \dualp{p_p^{n+1}}{\Tn\bm{v}_f}{\Gfp}
= \inner{\bm{f}_f^{n+1}}{\bm{v}_f}{\Omf},
\label{eq:BDF2_a}
\\[4pt]
&\inner{2\mu_p\bm{D}(\bm{\eta}^{n+1})}{\bm{D}(\bm{v}_p)}{\Omp}
- \inner{\xi^{n+1}}{\nabla\!\cdot\!\bm{v}_p}{\Omp}
- \dualp{p_p^{n+1}}{\Tn\bm{v}_p}{\Gfp}
\notag\\
&\quad
+ \frac{\gamma\mu_f}{\sqrt{\kappa}}
  \dualp{\Tt(D_t\bm{\eta}^{n+1} - \bm{u}_f^{n+1})}
        {\Tt\bm{v}_p}{\Gfp}
= \inner{\bm{f}_s^{n+1}}{\bm{v}_p}{\Omp},
\label{eq:BDF2_b}
\\[4pt]
&-\inner{\nabla\!\cdot\!\bm{u}_f^{n+1}}{w_f}{\Omf}
= (S_f^{n+1},w_f)_{\Omf},
\label{eq:BDF2_c}
\\[4pt]
&-\inner{\nabla\!\cdot\!\bm{\eta}^{n+1}}{w_s}{\Omp}
+ \frac{1}{\lambda_p}
  \inner{\alpha p_p^{n+1} - \xi^{n+1}}{w_s}{\Omp}
= 0,
\label{eq:BDF2_d}
\\[4pt]
&\inner{D_t\!\left[
  c_p p_p^{n+1}
  - \frac{\alpha}{\lambda_p}\xi^{n+1}
\right]}{w_p}{\Omp}
\notag\\
&\quad
- \dualp{\Tn(\bm{u}_f^{n+1} - D_t\bm{\eta}^{n+1})}
        {w_p}{\Gfp}
+ \frac{\kappa}{\mu_f}
  \inner{\nabla p_p^{n+1}}{\nabla w_p}{\Omp}
= \inner{S_p^{n+1}}{w_p}{\Omp}.
\label{eq:BDF2_e}
\end{align}
\end{subequations}
The scheme \eqref{eq:BDF2_monolithic_weak} is fully implicit and monolithic at each time step. All interface conditions are enforced at the new time level $t^{n+1}$, and the fluid and poroelastic subproblems are solved simultaneously. Consequently, no operator splitting error is introduced.

We now express the above time-discrete BDF2 scheme
in compact block-operator form. Using the
operators $\mathcal N$ and $\mathcal M$
introduced previously, the system at each time
level $t^{n+1}$ can be written as follows:
\begin{equation}\label{eq:BDF2_operator_form}
\mathcal A_{\mathrm{BDF2}}\bm U^{n+1}
=
\bm b^{n+1},
\end{equation}
where
\begin{equation}\label{eq:BDF2_matrix}
\mathcal A_{\mathrm{BDF2}}
:=
\frac{3}{2\Delta t}\mathcal N+\mathcal M,
\end{equation}
and
\begin{equation}\label{eq:BDF2_rhs}
\bm b^{n+1}
:=
\mathcal F^{n+1}
+\frac{2}{\Delta t}\mathcal N\bm U^n
-\frac{1}{2\Delta t}\mathcal N\bm U^{n-1}.
\end{equation}

More explicitly, the assembled operator matrix is
\[
\small
\mathcal A_{\mathrm{BDF2}}
=
\begin{bmatrix}
\dfrac{3}{2\Delta t}M_f+\widehat{\mathcal A}_f
  & -\dfrac{3}{2\Delta t}\mathcal B_{fs}^{\bm\tau,T}
  & -\mathcal B_f^T
  & 0
  & \mathcal B_{fp}^{\bm n,T}
\\[6pt]
-\mathcal B_{fs}^{\bm\tau}
  & \dfrac{3}{2\Delta t}\mathcal A_s^{\bm\tau}
    +\mathcal A_s^{\mathrm{elas}}
  & 0
  & -\mathcal B_{s\xi}^{T}
  & -\mathcal B_{sp}^{\bm n,T}
\\[6pt]
-\mathcal B_f
  & 0 & 0 & 0 & 0
\\[6pt]
0
  & -\mathcal B_{s\xi}
  & 0
  & -M_\xi
  & \mathcal{B}_{p \xi}^{T}
\\[6pt]
-\mathcal B_{fp}^{\bm n}
  & \dfrac{3}{2\Delta t}\mathcal B_{sp}^{\bm n}
  & 0
  & -\dfrac{3}{2\Delta t}\mathcal{B}_{p \xi}
  & \dfrac{3}{2\Delta t}M_p
    +\mathcal A_p^{\mathrm{Darcy}}
\end{bmatrix}.
\]
The corresponding right-hand side is
\[
\small
\bm b^{n+1}
=
\begin{bmatrix}
\bm f_f^{n+1}
+\dfrac{2}{\Delta t}M_f\bm u_f^n
-\dfrac{1}{2\Delta t}M_f\bm u_f^{n-1}
-\dfrac{2}{\Delta t}
 \mathcal B_{fs}^{\bm\tau,T}\bm\eta^n
+\dfrac{1}{2\Delta t}
 \mathcal B_{fs}^{\bm\tau,T}\bm\eta^{n-1}
\\[8pt]
\bm f_s^{n+1}
+\dfrac{2}{\Delta t}\mathcal A_s^{\bm\tau}\bm\eta^n
-\dfrac{1}{2\Delta t}\mathcal A_s^{\bm\tau}\bm\eta^{n-1}
\\[8pt]
S_f^{n+1}
\\[8pt]
0
\\[8pt]
S_p^{n+1}
+\dfrac{2}{\Delta t}\mathcal B_{sp}^{\bm n}\bm\eta^n
-\dfrac{1}{2\Delta t}\mathcal B_{sp}^{\bm n}\bm\eta^{n-1}
-\dfrac{2}{\Delta t}\mathcal{B}_{p \xi}\xi^n
+\dfrac{1}{2\Delta t}\mathcal{B}_{p \xi} \xi^{n-1}
+\dfrac{2}{\Delta t}M_pp_p^n
-\dfrac{1}{2\Delta t}M_pp_p^{n-1}
\end{bmatrix}.
\]

\begin{algorithm}[t]
\caption{A Monolithic BDF2 Algorithm for the Coupled Stokes--Biot System}
\label{alg:monolithic_BDF2}
\small
\SetAlgoVlined

\KwIn{Physical parameters $(\rho_f,\mu_f,\mu_p,\lambda_p,\alpha,s_0,\kappa,\gamma)$; final time $T$; time step $\Delta t$; \\
initial data $\bm{U}^0 = (\bm{u}_f^0,\bm{\eta}^0,p_f^0,\xi^0,p_p^0)^T$.}

\KwOut{Discrete solution $\{\bm{U}^n\}_{n=0}^{N}$, where $\bm{U}^n = (\bm{u}_f^n,\bm{\eta}^n,p_f^n,\xi^n,p_p^n)^T$.}

Set $N =   T/\Delta t  $\;

\BlankLine
\textbf{Step 0 (Initialization):}\\
Compute $\bm{U}^1$ using the backward Euler scheme:
\[
\mathcal{A}_{\mathrm{BE}}\,\bm{U}^1
=
\mathcal{F}^1 + \frac{1}{\Delta t}\mathcal{N}\bm{U}^0, \quad  \mbox{where}~ \mathcal{A}_{\mathrm{BE}} = \dfrac{1}{\Delta t}\mathcal{N} + \mathcal{M};
\]

\BlankLine
\textbf{Step 1 (Time-stepping):}\\
\For{$n = 1$ \KwTo $N-1$}{
  Assemble the system matrix
  \[
  \mathcal{A}_{\mathrm{BDF2}} = \frac{3}{2\Delta t}\mathcal{N} + \mathcal{M}.
  \]

  \BlankLine
  Assemble the right-hand side
  \[
  \bm{b}^{n+1}
  =
  \mathcal{F}^{n+1}
  + \frac{2}{\Delta t}\mathcal{N}\bm{U}^n
  - \frac{1}{2\Delta t}\mathcal{N}\bm{U}^{n-1}.
  \]

  \BlankLine
  Solve the monolithic system
  \[
  \mathcal{A}_{\mathrm{BDF2}}\,\bm{U}^{n+1}
  = \bm{b}^{n+1}.
  \]
}
\end{algorithm}

\begin{remark}[Differential--algebraic structure]
\label{rem:DAE_block}
The BDF2 operator $D_t$ acts on $\bm u_f$, $\bm\eta$, and the
storage combination $c_p p_p-\frac{\alpha}{\lambda_p}\xi$,
whereas \eqref{eq:BDF2_c} and \eqref{eq:BDF2_d} remain algebraic
constraints at each time level. Accordingly, the third and fourth
block rows of $\mathcal N$ vanish. The fluid pressure $p_f$ is
therefore purely algebraic, while $\xi$ has no separate evolution
equation but enters through the differentiated storage term in the
fifth row of $\mathcal N$.
\end{remark}

\section{Stability analysis}
\label{sec:analysis}
In this section, we establish stability properties of the monolithic
BDF2 scheme \eqref{eq:BDF2_monolithic_weak}. The analysis is based on a discrete energy argument, which relies on the intrinsic $G$-stability of the BDF2 method. For ease of analysis and presentation, throughout this section we assume that $s_0 > 0$, $S_f \equiv 0$, and $\bm{f}_s =\bm{0}$. 

\subsection{The BDF2 energy identity and preliminary inequalities}
We begin with a fundamental identity satisfied by the BDF2 difference operator, which plays a key role in the derivation of energy estimates.
\begin{lemma}[BDF2 $G$-stability identity]\label{lem:BDF2_identity}
Let $H$ be a Hilbert space with inner product $(\cdot,\cdot)_H$ and associated norm $\|\cdot\|_H$. For any sequence $\{z^n\}_{n\ge 0}\subset H$,
the BDF2 operator \eqref{eq:BDF2_difference_operator} satisfies
\begin{align}\label{eq:BDF2_identity}
  \inner{\Dt{z^{n+1}}}{z^{n+1}}{H}
  &= \frac{1}{4\Delta t}
    \Bigl(
      \|z^{n+1}\|_H^2 + \|2z^{n+1} - z^n\|_H^2 
      - \|z^n\|_H^2 - \|2z^n - z^{n-1}\|_H^2
    \Bigr) \notag\\
  &\quad + \frac{1}{4\Delta t}
      \|z^{n+1} - 2z^n + z^{n-1}\|_H^2.
\end{align}
\end{lemma}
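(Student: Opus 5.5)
This is a purely algebraic identity in a Hilbert space, so the plan is to verify it by direct expansion of inner products. Multiplying both sides by $4\Delta t$ and using the definition \eqref{eq:BDF2_difference_operator}, the left-hand side becomes
\[
2\,(3z^{n+1}-4z^n+z^{n-1},\,z^{n+1})_H
= 6\|z^{n+1}\|_H^2 - 8(z^n,z^{n+1})_H + 2(z^{n-1},z^{n+1})_H .
\]
It therefore suffices to show that the right-hand side, after multiplication by $4\Delta t$, reduces to exactly this quadratic form in $(z^{n+1},z^n,z^{n-1})$.

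To carry this out, I will write $a=z^{n+1}$, $b=z^n$, $c=z^{n-1}$ and expand each of the five squared norms by bilinearity:
\begin{align*}
\|2a-b\|_H^2 &= 4\|a\|_H^2-4(a,b)_H+\|b\|_H^2,\\
\|2b-c\|_H^2 &= 4\|b\|_H^2-4(b,c)_H+\|c\|_H^2,\\
\|a-2b+c\|_H^2 &= \|a\|_H^2+4\|b\|_H^2+\|c\|_H^2-4(a,b)_H+2(a,c)_H-4(b,c)_H.
\end{align*}
Summing $\|a\|_H^2+\|2a-b\|_H^2-\|b\|_H^2-\|2b-c\|_H^2+\|a-2b+c\|_H^2$, I then collect coefficients monomial by monomial. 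The coefficient of $\|a\|_H^2$ is $1+4+1=6$. For $\|b\|_H^2$ it is $1-1-4+4=0$, and for $\|c\|_H^2$ it is $-1+1=0$. The cross term $(a,b)_H$ gets $-4-4=-8$, $(a,c)_H$ gets $2$, and $(b,c)_H$ gets $4-4=0$. This matches $6\|a\|_H^2-8(a,b)_H+2(a,c)_H$ exactly, which proves \eqref{eq:BDF2_identity}.

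There is no genuine obstacle. The only point requiring care is the bookkeeping of signs in the cross terms, in particular that the $(b,c)_H$ contributions from $-\|2b-c\|_H^2$ and from the numerical dissipation term $\|a-2b+c\|_H^2$ cancel. I would present the expansion compactly as above and remark that the identity holds for every $n\ge1$ and for arbitrary elements of $H$, since only bilinearity and symmetry of the inner product are used. This is the form needed later, where $H$ will be taken as the weighted $L^2$ spaces associated with $\rho_f$, $c_p$, and the BJS interface term.
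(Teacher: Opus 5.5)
Your expansion is correct and is exactly the direct algebraic verification the paper relies on; the paper gives no further detail beyond citing Heywood--Rannacher and Chen et al. Your coefficient bookkeeping checks out, including the cancellation of the $(b,c)_H$ terms, and the identity uses only bilinearity and symmetry of a real inner product. As a minor aside unrelated to the proof, the paper later applies the lemma with the inner products $\rho_f(\cdot,\cdot)_{\Omf}$, $2\mu_p(\bm D(\cdot),\bm D(\cdot))_{\Omp}$, $\lambda_p^{-1}(\cdot,\cdot)_{\Omp}$ and $s_0(\cdot,\cdot)_{\Omp}$, not with a $c_p$-weighted or Beavers--Joseph--Saffman interface inner product.
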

\noindent
This identity follows from a direct algebraic expansion of the BDF2 difference operator and is classical in the analysis of second-order time-stepping schemes; see, e.g., \cite{Heywood1990, Chen2013}. It expresses the $G$-stability of the BDF2 method and provides a discrete counterpart of an energy balance.
\noindent
Motivated by \eqref{eq:BDF2_identity}, we introduce suitable notions of discrete energy and dissipation adapted to the coupled Stokes--Biot system. To simplify the notation, we define
\begin{equation}\label{eq:Y_definition}
Y^{n+1}
:=
\alpha p_p^{n+1}
-
\xi^{n+1}.
\end{equation}


\begin{definition}[Discrete energy and dissipation]
\label{def:energy_dissipation}
Recall the combined-pressure variable $Y^{n+1}$
defined in \eqref{eq:Y_definition} and the elastic
energy norm $\|\bm{v}\|_{a_p}^2 :=
2\mu_p\|\bm{D}(\bm{v})\|_{0,\Omp}^2$.
The discrete energy and dissipation associated with
the BDF2 scheme \eqref{eq:BDF2_monolithic_weak} are
defined as follows.

\noindent\textbf{Discrete energy:}
\begin{align}\label{eq:discrete_energy}
  E^{n+1} &:=
  \frac{\rho_f}{4}\Bigl(
    \|\bm{u}_f^{n+1}\|_{0,\Omf}^2
    + \|2\bm{u}_f^{n+1} - \bm{u}_f^n\|_{0,\Omf}^2
  \Bigr)
  + \frac{1}{4}\Bigl(
    \|\bm{\eta}^{n+1}\|_{a_p}^2
    + \|2\bm{\eta}^{n+1} - \bm{\eta}^n\|_{a_p}^2
  \Bigr)
  \notag\\
  &\quad
  + \frac{s_0}{4}\Bigl(
    \|p_p^{n+1}\|_{0,\Omp}^2
    + \|2p_p^{n+1} - p_p^n\|_{0,\Omp}^2
  \Bigr)
  + \frac{1}{4\lambda_p}\Bigl(
    \|Y^{n+1}\|_{0,\Omp}^2
    + \|2Y^{n+1} - Y^n\|_{0,\Omp}^2
  \Bigr).
\end{align}

\noindent\textbf{Discrete dissipation:}
\begin{align}\label{eq:discrete_dissipation}
  \mathcal{D}^{n+1} &:=
  2\mu_f\norm{\bm{D}(\bm{u}_f^{n+1})}{0,\Omf}^2
  + \frac{\kappa}{\mu_f}
    \norm{\nabla p_p^{n+1}}{0,\Omp}^2
  + \frac{\gamma\mu_f}{\sqrt{\kappa}}
    \norm{\Tt(\bm{u}_f^{n+1}
    - D_t\bm{\eta}^{n+1})}{0,\Gfp}^2
  \notag\\
  &\quad
  + \frac{\rho_f}{4\Delta t}
    \norm{\bm{u}_f^{n+1}
    - 2\bm{u}_f^n + \bm{u}_f^{n-1}}{0,\Omf}^2
  + \frac{1}{4\Delta t}
    \norm{\bm{\eta}^{n+1}
    - 2\bm{\eta}^n + \bm{\eta}^{n-1}}{a_p}^2
  \notag\\
  &\quad
  + \frac{s_0}{4\Delta t}
    \norm{p_p^{n+1}
    - 2p_p^n + p_p^{n-1}}{0,\Omp}^2
  + \frac{1}{4\lambda_p\Delta t}
    \norm{Y^{n+1} - 2Y^n + Y^{n-1}}{0,\Omp}^2.
\end{align}
\end{definition}
 
\noindent
The discrete energy $E^{n+1}$ captures the contributions of the fluid velocity, solid displacement, pore pressure, and total pressure, while the dissipation $\mathcal{D}^{n+1}$ contains both physical dissipation mechanisms (viscosity, Darcy flow, and interface friction) and numerical dissipation induced by the BDF2 scheme.
\noindent
In the forthcoming analysis, we repeatedly use the following standard inequalities, which hold true by classical theory \cite{GiraultRaviart1986, BoffiBrezziFortin2013}.
 
\noindent\textbf{Poincar\'e inequality.}
There exists a constant $c_P > 0$ such that
\[
\|\nabla q\|_{0,\Omp}
\ge c_P \|q\|_{0,\Omp},
\qquad \forall q \in W_p.
\]
 
\noindent\textbf{Korn's inequalities.}
There exist constants $c_K^f > 0$ and $c_K^p > 0$ such that
\[
\|\bm{D}(\bm{v})\|_{0,\Omf}
\ge c_K^f \|\bm{v}\|_{1,\Omf},
\qquad
\|\bm{D}(\bm{w})\|_{0,\Omp}
\ge c_K^p \|\bm{w}\|_{1,\Omp},
\]
for all $\bm{v} \in \bm{V}_f$ and $\bm{w} \in \bm{V}_p$.
 
\noindent\textbf{Trace inequalities.}
There exist constants $C_{\Gamma}^f > 0$ and $C_{\Gamma}^p > 0$ such that
\[
\|\bm{v}\|_{0,\Gfp}
\le C_{\Gamma}^f \|\bm{v}\|_{1,\Omf},
\qquad
\|q\|_{0,\Gfp}
\le C_{\Gamma}^p \|q\|_{1,\Omp}.
\]
 
\noindent\textbf{Young's inequality.}
For all $a,b \ge 0$ and $\varepsilon > 0$,
\[
ab \le \varepsilon a^2 + \frac{1}{4\varepsilon} b^2.
\]
 
\noindent
\textbf{Inf--sup condition for the Stokes problem.}
There exists a constant $\beta_f > 0$ such that
\begin{equation}\label{eq:infsup_stokes}
  \inf_{0 \neq q_f \in Q_f}
  \sup_{0 \neq \bm{v}_f \in \bm{V}_f}
  \frac{-(\nabla\!\cdot\!\bm{v}_f,\, q_f)_{\Omf}}
       {\|\bm{v}_f\|_{1,\Omf}\,\|q_f\|_{0,\Omf}}
  \ge \beta_f.
\end{equation}
 
\noindent
\textbf{Inf--sup condition for displacement, total pressure pair in the Biot system.}
There exists a constant $\beta_p > 0$ such that
\begin{equation}\label{eq:infsup_biot}
  \inf_{0 \neq \zeta \in Q_p}
  \sup_{0 \neq \bm{v}_p \in \bm{V}_p}
  \frac{-(\nabla\!\cdot\!\bm{v}_p,\, \zeta)_{\Omp}}
       {\|\bm{v}_p\|_{1,\Omp}\,\|\zeta\|_{0,\Omp}}
  \ge \beta_p.
\end{equation}


\subsection{Discrete energy stability}
\label{subsec:energy_stability}

\begin{theorem}[Energy stability of the BDF2 scheme]
\label{thm:BDF2_energy_stability}
Assume that the initial and startup states satisfy the incompressibility
and total-pressure constraints at $t^0$ and $t^1$. Let
$\bm f_f^{n+1}\in \bm V_f'$ and $S_p^{n+1}\in W_p'$. Then, for
$1\le m\le N-1$,
\begin{equation}
\label{eq:BDF2_energy_estimate_clean}
\begin{aligned}
E^{m+1}
+\Delta t\sum_{n=1}^{m}D^{n+1}
\le
C\Bigg[
E^1
+\Delta t\sum_{n=1}^{m}
\left(
\frac{1}{\mu_f}
\norm{\bm f_f^{n+1}}{\bm V_f'}^2
+
\frac{\mu_f}{\kappa}
\norm{S_p^{n+1}}{W_p'}^2
\right)
\Bigg].
\end{aligned}
\end{equation}
The constant $C>0$ is independent of $\Delta t$, $\lambda_p$,
$\alpha$, and $s_0$. 
\end{theorem}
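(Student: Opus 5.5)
Testing the scheme with its own unknowns will make the coupling terms cancel and produce a BDF2 energy identity. In \eqref{eq:BDF2_monolithic_weak}, at each level $n\ge1$ take $\bm v_f=\bm u_f^{n+1}$, $\bm v_p=D_t\bm\eta^{n+1}$, $w_f=p_f^{n+1}$, $w_s=-D_t\xi^{n+1}$ (more precisely, the $D_t$ of \eqref{eq:BDF2_d} tested with $\xi^{n+1}$), and $w_p=p_p^{n+1}$, then add the equations.

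\textbf{Cancellations.} Because $D_t$ is linear and \eqref{eq:BDF2_d} holds at every level (at $t^0,t^1$ by assumption), we may apply $D_t$ to \eqref{eq:BDF2_d} and obtain
\[
-(\nabla\!\cdot\! D_t\bm\eta^{n+1},w_s)+\tfrac1{\lambda_p}(D_tY^{n+1},w_s)=0 .
\]
The terms then cancel as follows.
\begin{itemize}
\item Testing this with $w_s=\xi^{n+1}$ cancels $-(\xi^{n+1},\nabla\!\cdot\! D_t\bm\eta^{n+1})$ from \eqref{eq:BDF2_b}.
\item The pressure term $-(p_f^{n+1},\nabla\!\cdot\!\bm u_f^{n+1})$ vanishes by \eqref{eq:BDF2_c}, since $S_f=0$.
\item The interface normal terms $\langle p_p^{n+1},\Tn\bm u_f^{n+1}\rangle$ and $-\langle p_p^{n+1},\Tn D_t\bm\eta^{n+1}\rangle$ cancel exactly against the interface term in \eqref{eq:BDF2_e}.
\item The two BJS terms combine into $\frac{\gamma\mu_f}{\sqrt\kappa}\|\Tt(\bm u_f^{n+1}-D_t\bm\eta^{n+1})\|^2_{0,\Gfp}$.
\end{itemize}
For the storage term, write $c_pp_p-\frac{\alpha}{\lambda_p}\xi=s_0p_p+\frac{\alpha}{\lambda_p}Y$. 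Testing \eqref{eq:BDF2_e} with $p_p^{n+1}$ gives $s_0(D_tp_p^{n+1},p_p^{n+1})+\frac{\alpha}{\lambda_p}(D_tY^{n+1},p_p^{n+1})$. From the differentiated constraint tested with $\xi^{n+1}$ we get $-\frac1{\lambda_p}(D_tY^{n+1},\xi^{n+1})$. Adding the two yields $\frac1{\lambda_p}(D_tY^{n+1},Y^{n+1})$, because $\alpha p_p-\xi=Y$.

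\textbf{Energy inequality.} Applying Lemma~\ref{lem:BDF2_identity} to $\rho_f(D_t\bm u_f,\bm u_f)$, $(\bm D(\bm\eta),\bm D(D_t\bm\eta))$ in the $a_p$ inner product, $s_0(D_tp_p,p_p)$ and $\frac1{\lambda_p}(D_tY,Y)$ gives exactly $\frac{1}{\Delta t}(E^{n+1}-E^n)$ plus the numerical dissipation terms in $\mathcal D^{n+1}$. The elastic term needs care. Since $\bm v_p=D_t\bm\eta^{n+1}$, it reads $(\bm D(\bm\eta^{n+1}),\bm D(D_t\bm\eta^{n+1}))$, and the identity must be applied in the symmetric form $(D_tz,z)$; this is fine by symmetry of the inner product. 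The result is
\[
E^{n+1}-E^n+\Delta t\,\mathcal D^{n+1}=\Delta t\bigl[(\bm f_f^{n+1},\bm u_f^{n+1})+(S_p^{n+1},p_p^{n+1})\bigr].
\]
We bound the right-hand side by duality, Korn, Poincar\'e and Young:
\[
|(\bm f_f,\bm u_f)|\le\|\bm f_f\|_{\bm V_f'}\|\bm u_f\|_1\le \tfrac{C}{\mu_f}\|\bm f_f\|^2_{\bm V_f'}+\mu_f\|\bm D(\bm u_f)\|^2,
\]
\[
|(S_p,p_p)|\le \tfrac{C\mu_f}{\kappa}\|S_p\|^2_{W_p'}+\tfrac{\kappa}{2\mu_f}\|\nabla p_p\|^2 .
\]
The fluid Korn inequality can be used here because $|\Gamma_f^D|>0$. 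Absorbing half the physical dissipation and summing over $n=1,\dots,m$ telescopes to the claimed bound. The constant $C$ depends only on $c_K^f$ and $c_P$, so it is independent of $\Delta t,\lambda_p,\alpha,s_0$. No Gronwall argument is needed.

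\textbf{Main obstacle.} The delicate point is the treatment of the algebraic total-pressure constraint: choosing the test functions so that the $\xi$-terms and the $\alpha/\lambda_p$ coupling assemble into the perfect BDF2 form $\frac1{\lambda_p}(D_tY,Y)$ with no leftover cross terms. The whole argument relies on applying $D_t$ to \eqref{eq:BDF2_d}, which requires the constraint at the startup levels $t^0,t^1$; this is where the hypothesis of the theorem enters. A related subtlety is that $E^1$ involves the level $n=0$ (via $2\bm u_f^1-\bm u_f^0$, etc.), so $E^1$ is understood with the startup data as defined. If the BJS term produced a mismatch with $D_t\bm\eta$, we would fall back on the explicit algebraic combination already verified above. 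That combination works because the scheme itself uses $D_t\bm\eta^{n+1}$ consistently in both interface terms.
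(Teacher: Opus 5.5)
Your proposal is correct and follows the paper's proof essentially step for step: the same test functions ($\bm u_f^{n+1}$, $D_t\bm\eta^{n+1}$, $p_p^{n+1}$), the BDF2-differenced total-pressure constraint turning $-(\xi^{n+1},\nabla\!\cdot\!D_t\bm\eta^{n+1})$ plus the storage term into $s_0(D_tp_p^{n+1},p_p^{n+1})+\lambda_p^{-1}(D_tY^{n+1},Y^{n+1})$, the $G$-stability identity, and duality/Korn/Poincar\'e/Young on the loads. The only cosmetic difference is that you test the differenced constraint weakly, whereas the paper sets $w_s=0$ and substitutes the differenced constraint as a pointwise identity; in your version the admissible choice is $w_s=-\xi^{n+1}$, so your stated $w_s=\xi^{n+1}$ and the phrase $w_s=-D_t\xi^{n+1}$ should be tidied, although your final leftover $-\lambda_p^{-1}(D_tY^{n+1},\xi^{n+1})$ is correct.
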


\begin{proof}
For the time-discrete formulation
\eqref{eq:BDF2_monolithic_weak}, we choose
\[
\bm v_f=4\Delta t\,\bm u_f^{n+1},
\qquad
\bm v_p=4\Delta t\,D_t\bm\eta^{n+1},
\qquad
w_f=-4\Delta t\,p_f^{n+1},
\]
\[
w_s=0,
\qquad
w_p=4\Delta t\,p_p^{n+1}.
\]
The factor $4\Delta t$ is introduced only to match the normalization
of the BDF2 $G$-stability identity in
Lemma~\ref{lem:BDF2_identity}. The total-pressure constraint
\eqref{eq:BDF2_d} will be used separately below.

Substituting these test functions into
\eqref{eq:BDF2_a}--\eqref{eq:BDF2_e} and summing the resulting
relations, the fluid-pressure terms cancel because $S_f\equiv0$.
We obtain
\begin{align}
&4\Delta t\,\rho_f
\inner{D_t\bm u_f^{n+1}}{\bm u_f^{n+1}}{\Omf}
+
8\mu_f\Delta t
\norm{\bm D(\bm u_f^{n+1})}{0,\Omf}^{2}
\notag\\
&\quad
+
8\mu_p\Delta t
\inner{\bm D(\bm\eta^{n+1})}
{\bm D(D_t\bm\eta^{n+1})}{\Omp}
+
4\Delta t
\inner{D_t\Theta^{n+1}}{p_p^{n+1}}{\Omp}
\notag\\
&
-
4\Delta t
\inner{\xi^{n+1}}
{\nabla\!\cdot D_t\bm\eta^{n+1}}{\Omp}
+
\frac{4\kappa\Delta t}{\mu_f}
\norm{\nabla p_p^{n+1}}{0,\Omp}^{2}
+
\mathcal I_\Gamma^{n+1}
=
4\Delta t\,\mathcal R^{n+1},
\label{eq:energy_identity_pre}
\end{align}
where
\begin{equation}
\label{eq:Theta_definition}
\Theta^{n+1}
:=
c_p p_p^{n+1}
-\frac{\alpha}{\lambda_p}\xi^{n+1},
\end{equation}
and
\[
\mathcal R^{n+1}
=
\inner{\bm f_f^{n+1}}{\bm u_f^{n+1}}{\Omf}
+
\inner{\bm f_s^{n+1}}{D_t\bm\eta^{n+1}}{\Omp}
+
\inner{S_p^{n+1}}{p_p^{n+1}}{\Omp}.
\]

The normal interface contributions may be written in the same pairing
order. Therefore,
\[
\begin{aligned}
\mathcal I_\Gamma^{n+1}
&=
4\Delta t
\dualp{p_p^{n+1}}
{T_n(\bm u_f^{n+1}-D_t\bm\eta^{n+1})}{\Gfp}
\\
&\quad
-
4\Delta t
\dualp{p_p^{n+1}}
{T_n(\bm u_f^{n+1}-D_t\bm\eta^{n+1})}{\Gfp}
\\
&\quad
+
\frac{4\gamma\mu_f\Delta t}{\sqrt{\kappa}}
\norm{
T_\tau(\bm u_f^{n+1}-D_t\bm\eta^{n+1})
}{0,\Gfp}^{2}.
\end{aligned}
\]
Hence, the normal terms cancel exactly and
\begin{equation}
\label{eq:interface_dissipation}
\mathcal I_\Gamma^{n+1}
=
\frac{4\gamma\mu_f\Delta t}{\sqrt{\kappa}}
\norm{
T_\tau(\bm u_f^{n+1}-D_t\bm\eta^{n+1})
}{0,\Gfp}^{2}
\ge 0.
\end{equation}

Recall that
\[
Y^j:=\alpha p_p^j-\xi^j.
\]
The total-pressure constraint at each time level is
\[
-\nabla\!\cdot\bm\eta^j
+\frac{1}{\lambda_p}Y^j=0.
\]
Because this relation holds at the three BDF2 levels
$j=n+1,n,n-1$, including the compatible initial and startup levels,
its BDF2 difference satisfies
\[
-\nabla\!\cdot D_t\bm\eta^{n+1}
+\frac{1}{\lambda_p}D_tY^{n+1}=0.
\]
Consequently,
\begin{equation}
\label{eq:differentiated_constraint}
\nabla\!\cdot D_t\bm\eta^{n+1}
=
\frac{1}{\lambda_p}D_tY^{n+1}.
\end{equation}

Using \eqref{eq:Theta_definition},
$c_p=s_0+\alpha^2/\lambda_p$, and
$Y^{n+1}=\alpha p_p^{n+1}-\xi^{n+1}$, we obtain
\begin{align}
&-\frac{1}{\lambda_p}
\inner{\xi^{n+1}}{D_tY^{n+1}}{\Omp}
+
\inner{D_t\Theta^{n+1}}{p_p^{n+1}}{\Omp}
\notag\\
&\qquad
=
s_0
\inner{D_t p_p^{n+1}}{p_p^{n+1}}{\Omp}
+
\frac{1}{\lambda_p}
\inner{D_tY^{n+1}}{Y^{n+1}}{\Omp}.
\label{eq:combined_pressure_identity}
\end{align}
Substituting
\eqref{eq:interface_dissipation}--\eqref{eq:combined_pressure_identity}
into \eqref{eq:energy_identity_pre} yields
\begin{align}
&4\Delta t\,\rho_f
\inner{D_t\bm u_f^{n+1}}{\bm u_f^{n+1}}{\Omf}
+
8\mu_p\Delta t
\inner{\bm D(\bm\eta^{n+1})}
{\bm D(D_t\bm\eta^{n+1})}{\Omp}
\notag\\
&\quad
+
4\Delta t\,s_0
\inner{D_t p_p^{n+1}}{p_p^{n+1}}{\Omp}
+
\frac{4\Delta t}{\lambda_p}
\inner{D_tY^{n+1}}{Y^{n+1}}{\Omp}
+ 4\Delta t\,\widetilde D^{\,n+1}
= 4\Delta t\,\mathcal R^{n+1},
\label{eq:energy_before_G}
\end{align}
where the physical dissipation is
\[
\widetilde D^{\,n+1}
:=
2\mu_f
\norm{\bm D(\bm u_f^{n+1})}{0,\Omf}^{2}
+
\frac{\kappa}{\mu_f}
\norm{\nabla p_p^{n+1}}{0,\Omp}^{2}
+
\frac{\gamma\mu_f}{\sqrt{\kappa}}
\norm{
T_\tau(\bm u_f^{n+1}-D_t\bm\eta^{n+1})
}{0,\Gfp}^{2}.
\]

We apply Lemma~\ref{lem:BDF2_identity} to
$\bm u_f^{n+1}$, $\bm\eta^{n+1}$, and $Y^{n+1}$
with respect to the inner products
\[
\rho_f(\cdot,\cdot)_{\Omf},
\qquad
2\mu_p
\bigl(\bm D(\cdot),\bm D(\cdot)\bigr)_{\Omp},
\qquad
\lambda_p^{-1}(\cdot,\cdot)_{\Omp},
\]
respectively. When $s_0>0$, we also apply the lemma to
$p_p^{n+1}$ with respect to
$s_0(\cdot,\cdot)_{\Omp}$. When $s_0=0$, this term vanishes and
is simply omitted. Dividing \eqref{eq:energy_before_G} by four and
using Definition~\ref{def:energy_dissipation}, we obtain the exact
one-step identity
\begin{equation}
\label{eq:energy_with_num_diss}
E^{n+1}-E^n+\Delta t\,D^{n+1}
=
\Delta t\,\mathcal R^{n+1}.
\end{equation}

Since $\bm f_s\equiv\bm 0$,
\[
\mathcal R^{n+1}
=
\inner{\bm f_f^{n+1}}{\bm u_f^{n+1}}{\Omf}
+
\inner{S_p^{n+1}}{p_p^{n+1}}{\Omp}.
\]
By duality and Korn's inequality, for every $\varepsilon>0$,
\begin{align}
\left|
\inner{\bm f_f^{n+1}}{\bm u_f^{n+1}}{\Omf}
\right|
&\le
C_K
\norm{\bm f_f^{n+1}}{\bm V_f'}
\norm{\bm D(\bm u_f^{n+1})}{0,\Omf}
\notag\\
&\le
\varepsilon\,2\mu_f
\norm{\bm D(\bm u_f^{n+1})}{0,\Omf}^{2}
+
\frac{C_\varepsilon}{\mu_f}
\norm{\bm f_f^{n+1}}{\bm V_f'}^{2}.
\label{eq:fluid_force_bound}
\end{align}
Likewise, by duality and Poincar\'e's inequality,
\begin{align}
\left|
\inner{S_p^{n+1}}{p_p^{n+1}}{\Omp}
\right|
&\le
C_P
\norm{S_p^{n+1}}{W_p'}
\norm{\nabla p_p^{n+1}}{0,\Omp}
\notag\\
&\le
\varepsilon\,\frac{\kappa}{\mu_f}
\norm{\nabla p_p^{n+1}}{0,\Omp}^{2}
+
C_\varepsilon\frac{\mu_f}{\kappa}
\norm{S_p^{n+1}}{W_p'}^{2}.
\label{eq:pore_source_bound}
\end{align}
Define the parameter-weighted load norm
\begin{equation}
\label{eq:parameter_weighted_load_norm}
\|\mathcal F^{n+1}\|_{*,\mathrm{par}}^2
:=
\frac{1}{\mu_f}
\norm{\bm f_f^{n+1}}{\bm V_f'}^2
+
\frac{\mu_f}{\kappa}
\norm{S_p^{n+1}}{W_p'}^2.
\end{equation}
From \eqref{eq:fluid_force_bound}--\eqref{eq:pore_source_bound},
\[
|\mathcal R^{n+1}|
\le
\varepsilon D^{n+1}
+
C_\varepsilon
\|\mathcal F^{n+1}\|_{*,\mathrm{par}}^2.
\]
Substitution into \eqref{eq:energy_with_num_diss} gives
\[
E^{n+1}-E^n
+
(1-\varepsilon)\Delta t\,D^{n+1}
\le
C_\varepsilon\Delta t
\|\mathcal F^{n+1}\|_{*,\mathrm{par}}^2.
\]
Summing from $n=1$ to $m$ and choosing, for example,
$\varepsilon=\tfrac12$, we obtain
\[
E^{m+1}
+
\frac{\Delta t}{2}
\sum_{n=1}^{m}D^{n+1}
\le
E^1
+
C\Delta t
\sum_{n=1}^{m}
\|\mathcal F^{n+1}\|_{*,\mathrm{par}}^2.
\]
Absorbing the harmless factor $1/2$ into the generic constant yields
\eqref{eq:BDF2_energy_estimate_clean}. All dependence on
$\mu_f$ and $\kappa$ is displayed explicitly in
\eqref{eq:parameter_weighted_load_norm}.
\end{proof}

\begin{remark}
\label{rem:solid_force}
We remark here that the estimate (\ref{thm:BDF2_energy_stability}) remains valid for $s_0=0$, in which case all $s_0$-weighted terms in $E^{n+1}$ and $D^{n+1}$ are omitted.
The restriction $\bm f_s\equiv\bm 0$ is required by the present
one-step energy argument. Indeed, a nonzero solid force produces
\[
\inner{\bm f_s^{n+1}}{D_t\bm\eta^{n+1}}{\Omp},
\]
whereas $D^{n+1}$ controls only the BDF2 second difference
$\bm\eta^{n+1}-2\bm\eta^n+\bm\eta^{n-1}$, not
$D_t\bm\eta^{n+1}$. For example, the sequence
$\bm\eta^n=n\Delta t\,\bm v$ has zero second difference but
$D_t\bm\eta^{n+1}=\bm v\ne\bm 0$. Thus, this term cannot be absorbed
by the dissipation in \eqref{eq:energy_with_num_diss} without an
additional temporal estimate or a different summation argument.
\end{remark}

\section{Finite element discretization and convergence analysis}
\label{sec:convergence}

Let $\{\mathcal T_h\}_{h>0}$ be a shape-regular family of conforming
triangulations of
\[
\Omega=\Omega_f\cup\Gamma_{fp}\cup\Omega_p,
\qquad
h:=\max_{K\in\mathcal T_h}\operatorname{diam}(K).
\]
The meshes are assumed to match on $\Gamma_{fp}$. Their restrictions
to $\Omega_f$ and $\Omega_p$ are denoted by
$\mathcal T_h^f$ and $\mathcal T_h^p$, respectively.

For an integer $k\ge 2$, define
\begin{align*}
\bm V_{f,h}
&:=
\left\{
\bm v_h\in\bm V_f\cap\bm C^0(\overline{\Omega_f}):
\bm v_h|_K\in \bm P_{k}(K),\
K\in\mathcal T_h^f
\right\},\\
Q_{f,h}
&:=
\left\{
q_h\in Q_f\cap C^0(\overline{\Omega_f}):
q_h|_K\in P_{k-1}(K),\
K\in\mathcal T_h^f
\right\},\\
\bm V_{p,h}
&:=
\left\{
\bm v_h\in\bm V_p\cap\bm C^0(\overline{\Omega_p}):
\bm v_h|_K\in \bm P_{k}(K),\
K\in\mathcal T_h^p
\right\},\\
Q_{p,h}
&:=
\left\{
q_h\in Q_p\cap C^0(\overline{\Omega_p}):
q_h|_K\in P_{k-1}(K),\
K\in\mathcal T_h^p
\right\},\\
W_{p,h}
&:=
\left\{
w_h\in W_p\cap C^0(\overline{\Omega_p}):
w_h|_K\in P_{k}(K),\
K\in\mathcal T_h^p
\right\}.
\end{align*}
We comment here that the Taylor--Hood pairs
$(\bm V_{f,h},Q_{f,h})$ and $(\bm V_{p,h},Q_{p,h})$ satisfy
\begin{equation}
\label{eq:infsup_stokes_h}
\inf_{0\ne q_h\in Q_{f,h}}
\sup_{0\ne\bm v_h\in\bm V_{f,h}}
\frac{-(\nabla\!\cdot\!\bm v_h,q_h)_{\Omf}}
{\norm{\bm v_h}{1,\Omf}\norm{q_h}{0,\Omf}}
\ge\beta_f,
\end{equation}
and
\begin{equation}
\label{eq:infsup_biot_h}
\inf_{0\ne q_h\in Q_{p,h}}
\sup_{0\ne\bm v_h\in\bm V_{p,h}}
\frac{-(\nabla\!\cdot\!\bm v_h,q_h)_{\Omp}}
{\norm{\bm v_h}{1,\Omp}\norm{q_h}{0,\Omp}}
\ge\beta_p,
\end{equation}
where $\beta_f,\beta_p>0$ are independent of $h$. Set
\[
\bm H_h
:=
\bm V_{f,h}\times\bm V_{p,h}
\times Q_{f,h}\times Q_{p,h}\times W_{p,h}.
\]

For each $n\ge1$, given the discrete solutions at the two preceding
time levels, find
\[
(\bm u_{f,h}^{n+1},\bm\eta_h^{n+1},
p_{f,h}^{n+1},\xi_h^{n+1},p_{p,h}^{n+1})
\in\bm H_h
\]
such that, for every
$(\bm v_{f,h},\bm v_{p,h},w_{f,h},w_{s,h},w_{p,h})\in\bm H_h$,
\begin{subequations}
\label{eq:BDF2_FE_scheme}
\begin{align}
&\rho_f(\Dt{\bm u_{f,h}^{n+1}},\bm v_{f,h})_{\Omf}
+2\mu_f(\bm D(\bm u_{f,h}^{n+1}),
        \bm D(\bm v_{f,h}))_{\Omf}
-(p_{f,h}^{n+1},\nabla\!\cdot\!\bm v_{f,h})_{\Omf}
\notag\\
&\quad
+\frac{\gamma\mu_f}{\sqrt\kappa}
\langle
\Tt(\bm u_{f,h}^{n+1}-\Dt{\bm\eta_h^{n+1}}),
\Tt\bm v_{f,h}
\rangle_{\Gfp}
+\langle p_{p,h}^{n+1},\Tn\bm v_{f,h}\rangle_{\Gfp}
=(\bm f_f^{n+1},\bm v_{f,h})_{\Omf},
\label{eq:FE_a}\\[2mm]
&2\mu_p(\bm D(\bm\eta_h^{n+1}),
        \bm D(\bm v_{p,h}))_{\Omp}
-(\xi_h^{n+1},\nabla\!\cdot\!\bm v_{p,h})_{\Omp}
-\langle p_{p,h}^{n+1},\Tn\bm v_{p,h}\rangle_{\Gfp}
\notag\\
&\quad
+\frac{\gamma\mu_f}{\sqrt\kappa}
\langle
\Tt(\Dt{\bm\eta_h^{n+1}}-\bm u_{f,h}^{n+1}),
\Tt\bm v_{p,h}
\rangle_{\Gfp}
=(\bm f_s^{n+1},\bm v_{p,h})_{\Omp},
\label{eq:FE_b}\\[2mm]
&-(\nabla\!\cdot\!\bm u_{f,h}^{n+1},w_{f,h})_{\Omf}
=(S_f^{n+1},w_{f,h})_{\Omf},
\label{eq:FE_c}\\[2mm]
&-(\nabla\!\cdot\!\bm\eta_h^{n+1},w_{s,h})_{\Omp}
+\frac1{\lambda_p}
(\alpha p_{p,h}^{n+1}-\xi_h^{n+1},w_{s,h})_{\Omp}
=0,
\label{eq:FE_d}\\[2mm]
&(\Dt{\Theta_h^{n+1}},w_{p,h})_{\Omp}
-\langle
\Tn(\bm u_{f,h}^{n+1}-\Dt{\bm\eta_h^{n+1}}),
w_{p,h}
\rangle_{\Gfp}
\notag\\
&\quad
+\frac{\kappa}{\mu_f}
(\nabla p_{p,h}^{n+1},\nabla w_{p,h})_{\Omp}
=(S_p^{n+1},w_{p,h})_{\Omp},
\label{eq:FE_e}
\end{align}
\end{subequations}
where
\[
\Theta_h^{n+1}
:=
c_p p_{p,h}^{n+1}
-\frac{\alpha}{\lambda_p}\xi_h^{n+1}.
\]
This is the conforming Galerkin discretization of
\eqref{eq:BDF2_monolithic_weak}.

\subsection{Convergence analysis}
\label{subsec:convergence}
In this section we derive convergence estimates
for the fully discrete monolithic BDF2 finite
element approximation. The analysis is based on
the standard decomposition of the total error
into projection and discrete components, combined
with consistency estimates and discrete energy
arguments.


The use of independent elliptic and $L^2$ projections leaves
uncontrolled pressure residuals in the discrete energy identity.
We therefore employ a coupled projection that preserves the two
algebraic constraints.
We denote
\[
\begin{aligned}
	a_f(\bm u,\bm v)
	&:=
	2\mu_f(\bm D(\bm u),\bm D(\bm v))_{\Omf}
	+\frac{\gamma\mu_f}{\sqrt\kappa}
	\langle\Tt\bm u,\Tt\bm v\rangle_{\Gfp},\\
	a_p(\bm\eta,\bm v)
	&:=
	2\mu_p(\bm D(\bm\eta),\bm D(\bm v))_{\Omp},\\
	d_p(p,w)
	&:=
	\frac{\kappa}{\mu_f}(\nabla p,\nabla w)_{\Omp}.
\end{aligned}
\]

At each $t\in[0,T]$, define
\[
\Pi_h(\bm u_f,\bm\eta,p_f,\xi,p_p)
=
(\widehat{\bm u}_{f,h},\widehat{\bm\eta}_h,
\widehat p_{f,h},\widehat\xi_h,\widehat p_{p,h})
\in\bm H_h
\]
as follows. First, $\widehat p_{p,h}\in W_{p,h}$ is the Darcy
projection:
\begin{equation}
	\label{eq:coupled_proj_pp}
	d_p(p_p-\widehat p_{p,h},w_{p,h})=0
	\qquad
	\forall w_{p,h}\in W_{p,h}.
\end{equation}
The fluid variables satisfy
\begin{align}
	a_f(\bm u_f-\widehat{\bm u}_{f,h},\bm v_{f,h})
	-(p_f-\widehat p_{f,h},\nabla\!\cdot\!\bm v_{f,h})_{\Omf}
	+\langle p_p-\widehat p_{p,h},\Tn\bm v_{f,h}\rangle_{\Gfp}
	&=0,
	\label{eq:coupled_proj_fluid_a}\\
	(\nabla\!\cdot(\bm u_f-\widehat{\bm u}_{f,h}),q_{f,h})_{\Omf}
	&=0,
	\label{eq:coupled_proj_fluid_b}
\end{align}
for every $(\bm v_{f,h},q_{f,h})\in\bm V_{f,h}\times Q_{f,h}$.
The poroelastic variables satisfy
\begin{align}
	a_p(\bm\eta-\widehat{\bm\eta}_h,\bm v_{p,h})
	-(\xi-\widehat\xi_h,\nabla\!\cdot\!\bm v_{p,h})_{\Omp}
	-\langle p_p-\widehat p_{p,h},\Tn\bm v_{p,h}\rangle_{\Gfp}
	&=0,
	\label{eq:coupled_proj_biot_a}\\
	-(\nabla\!\cdot(\bm\eta-\widehat{\bm\eta}_h),w_{s,h})_{\Omp}
	+\frac1{\lambda_p}
	\bigl(\alpha(p_p-\widehat p_{p,h})
	-(\xi-\widehat\xi_h),w_{s,h}\bigr)_{\Omp}
	&=0
	\label{eq:coupled_proj_biot_b}
\end{align}
for every $(\bm v_{p,h},w_{s,h})\in\bm V_{p,h}\times Q_{p,h}$.

Under some regularity assumptions, we have the following approximation properties for the coupled projection. 

\begin{proposition}[Approximation properties of the coupled projection]
	\label{prop:coupled_projection}
	Assume the discrete inf--sup conditions
	\eqref{eq:infsup_stokes_h}--\eqref{eq:infsup_biot_h}.
	Let
	\[
	(\widehat{\bm u}_{f,h},\widehat{\bm\eta}_h,
	\widehat p_{f,h},\widehat\xi_h,\widehat p_{p,h})
	=
	\Pi_h(\bm u_f,\bm\eta,p_f,\xi,p_p)
	\]
	be defined by
	\eqref{eq:coupled_proj_pp}--\eqref{eq:coupled_proj_biot_b}.
	Then
	\begin{align}
		&\norm{\bm u_f-\widehat{\bm u}_{f,h}}{1,\Omf}
		+\norm{p_f-\widehat p_{f,h}}{0,\Omf}
		+\norm{\bm\eta-\widehat{\bm\eta}_h}{1,\Omp}
		\notag\\
		&\qquad
		+\norm{\xi-\widehat\xi_h}{0,\Omp}
		+\norm{p_p-\widehat p_{p,h}}{1,\Omp}
		\notag\\
		&\quad\le
		Ch^k
		\Bigl(
		\norm{\bm u_f}{k+1,\Omf}
		+\norm{p_f}{k,\Omf}
		+\norm{\bm\eta}{k+1,\Omp}
		+\norm{\xi}{k,\Omp}
		+\norm{p_p}{k+1,\Omp}
		\Bigr).
		\label{eq:coupled_projection_estimate}
	\end{align}
	The constant $C$ is independent of $h$. For
	$\alpha\in(0,1]$ and $\lambda_p\ge\lambda_{\min}>0$, it may be
	chosen uniformly as $\lambda_p\to\infty$. It may depend on the
	fixed coefficients $\mu_f,\mu_p,\kappa,\gamma$ and on the domain.
\end{proposition}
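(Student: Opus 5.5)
The projection is block triangular, so I would estimate the blocks in the order they are determined: first $\widehat p_{p,h}$, then the fluid Stokes block with the pore-pressure error as data, then the poroelastic block with the pore-pressure error as data. Throughout I write $e_p:=p_p-\widehat p_{p,h}$.

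\textbf{Darcy block.} By \eqref{eq:coupled_proj_pp}, $\widehat p_{p,h}$ is the $d_p$-orthogonal projection onto $W_{p,h}$. Céa's lemma and Poincar\'e's inequality give
\[
\norm{e_p}{1,\Omp}\le C\inf_{w_h\in W_{p,h}}\norm{p_p-w_h}{1,\Omp}\le Ch^k\norm{p_p}{k+1,\Omp},
\]
using standard Lagrange interpolation of degree $k$.

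\textbf{Fluid block.} Equations \eqref{eq:coupled_proj_fluid_a}--\eqref{eq:coupled_proj_fluid_b} form a discrete Stokes problem for $(\widehat{\bm u}_{f,h},\widehat p_{f,h})$ with the extra functional $\bm v_{f,h}\mapsto\langle e_p,\Tn\bm v_{f,h}\rangle_{\Gfp}$ on the right. Its norm in $\bm V_f'$ is at most $C_\Gamma^fC_\Gamma^p\norm{e_p}{1,\Omp}$ by the trace inequalities. The form $a_f$ is coercive on $\bm V_f$ by Korn's inequality, since the BJS term is nonnegative, and it is bounded by the trace inequality. Together with the discrete inf--sup condition \eqref{eq:infsup_stokes_h}, Babu\v{s}ka--Brezzi theory gives a quasi-optimality bound. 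Concretely, I take interpolants $(I_h\bm u_f,J_hp_f)$, subtract, and apply the discrete stability of the saddle-point operator to $(\widehat{\bm u}_{f,h}-I_h\bm u_f,\widehat p_{f,h}-J_hp_f)$. This yields
\[
\norm{\bm u_f-\widehat{\bm u}_{f,h}}{1}+\norm{p_f-\widehat p_{f,h}}{0}\le C\bigl(\inf\norm{\bm u_f-\bm v_h}{1}+\inf\norm{p_f-q_h}{0}+\norm{e_p}{1,\Omp}\bigr)\le Ch^k(\cdots).
\]

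\textbf{Poroelastic block.} This is the main obstacle, because the constants must be uniform as $\lambda_p\to\infty$. The system \eqref{eq:coupled_proj_biot_a}--\eqref{eq:coupled_proj_biot_b} is a perturbed saddle-point problem in $(\widehat{\bm\eta}_h,\widehat\xi_h)$ with penalty $-\lambda_p^{-1}(\xi,w_s)$. Its data are the interface functional $\langle e_p,\Tn\cdot\rangle$ and the volume term $\frac{\alpha}{\lambda_p}(e_p,w_s)$, whose size is at most $\lambda_{\min}^{-1}\norm{e_p}{0}$ because $\alpha\le1$.

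I would invoke the Brezzi theory for penalized saddle-point problems, as in the Lee--Mardal--Winther analysis. Since $a_p$ is coercive on all of $\bm V_p$, $(\xi,\xi)/\lambda_p\ge0$, and \eqref{eq:infsup_biot_h} holds, the discrete operator is stable in $\bm H^1\times L^2$ with a constant depending only on $\mu_p$, $c_K^p$, $\beta_p$ and an upper bound on $1/\lambda_p\le1/\lambda_{\min}$, hence independent of large $\lambda_p$. I would verify this directly by testing with $(\bm v,-\zeta)$ plus a multiple of the inf--sup supremizer for $\zeta$.

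I then apply this stability to the discrete error $(\widehat{\bm\eta}_h-I_h\bm\eta,\widehat\xi_h-J_h\xi)$. The consistency residual involves $a_p(\bm\eta-I_h\bm\eta,\cdot)$, $(\xi-J_h\xi,\nabla\!\cdot\cdot)$, $(\nabla\!\cdot(\bm\eta-I_h\bm\eta),\cdot)$, $\lambda_p^{-1}(\xi-J_h\xi,\cdot)$, and the $e_p$ terms. Each is bounded by $Ch^k$ times the listed norms uniformly in $\lambda_p$. The triangle inequality then finishes the estimate.
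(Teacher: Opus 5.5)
Your proposal is correct and follows the same route as the paper's proof. You apply C\'ea's lemma to the Darcy projection, then mixed quasi-optimality to the fluid Stokes block and to the penalized poroelastic block, with the pore-pressure projection error entering as data through the interface trace term and, in the Biot block, through the term $\frac{\alpha}{\lambda_p}\norm{e_p}{0,\Omp}$. One addition: you explicitly verify that the penalized saddle-point problem is stable uniformly in $\lambda_p$, by testing with $(\bm v,-\zeta)$ plus a multiple of the supremizer, whereas the paper only asserts this step.
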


\begin{proof}
	The Darcy projection satisfies
	\[
	\norm{p_p-\widehat p_{p,h}}{1,\Omp}
	\le
	C\inf_{w_{p,h}\in W_{p,h}}
	\norm{p_p-w_{p,h}}{1,\Omp}.
	\]
	The trace theorem therefore gives
	\[
	\norm{p_p-\widehat p_{p,h}}{0,\Gfp}
	\le
	C\norm{p_p-\widehat p_{p,h}}{1,\Omp}.
	\]
	Applying the mixed quasi-optimality estimate to
	\eqref{eq:coupled_proj_fluid_a}--%
	\eqref{eq:coupled_proj_fluid_b} yields
	\[
	\begin{aligned}
		&\norm{\bm u_f-\widehat{\bm u}_{f,h}}{1,\Omf}
		+\norm{p_f-\widehat p_{f,h}}{0,\Omf}
		\\
		&\qquad\le C\left[
		\inf_{\bm v_{f,h}}\norm{\bm u_f-\bm v_{f,h}}{1,\Omf}
		+\inf_{q_{f,h}}\norm{p_f-q_{f,h}}{0,\Omf}
		+\norm{p_p-\widehat p_{p,h}}{0,\Gfp}
		\right].
	\end{aligned}
	\]
	Similarly, the mixed stability of
	\eqref{eq:coupled_proj_biot_a}--%
	\eqref{eq:coupled_proj_biot_b} gives
	\[
	\begin{aligned}
		&\norm{\bm\eta-\widehat{\bm\eta}_h}{1,\Omp}
		+\norm{\xi-\widehat\xi_h}{0,\Omp}
		\\
		&\qquad\le C\left[
		\inf_{\bm v_{p,h}}\norm{\bm\eta-\bm v_{p,h}}{1,\Omp}
		+\inf_{w_{s,h}}\norm{\xi-w_{s,h}}{0,\Omp}
		+\norm{p_p-\widehat p_{p,h}}{0,\Gfp}
		+\frac{\alpha}{\lambda_p}
		\norm{p_p-\widehat p_{p,h}}{0,\Omp}
		\right].
	\end{aligned}
	\]
	The result follows from the finite element approximation
	properties  \cite{BoffiBrezziFortin2013}. Since the projection is linear and time independent,
	it commutes with time differentiation.
\end{proof}

The same estimate holds for the time derivatives required below.
The constant in \eqref{eq:coupled_projection_estimate} is independent
of $h$; it may depend on the fixed coefficients
$\mu_f,\mu_p,\kappa,\gamma$ and on the regularity norms of the exact solution.




\begin{assumption}[Regularity assumption]
	\label{ass:regularity}
	The exact solution is sufficiently regular so that
	\begin{align*}
		\bm u_f
		&\in W^{3,\infty}(0,T;\bm L^2(\Omf))
		\cap W^{1,\infty}(0,T;\bm H^{k+1}(\Omf))
		\cap L^\infty(0,T;\bm H^{k+2}(\Omf)),\\
		\bm\eta
		&\in W^{4,\infty}(0,T;\bm H^1(\Omp))
		\cap W^{2,\infty}(0,T;\bm H^{k+1}(\Omp))
		\cap L^\infty(0,T;\bm H^{k+2}(\Omp)),\\
		p_f
		&\in W^{1,\infty}(0,T;H^k(\Omf)),\\
		\xi
		&\in W^{3,\infty}(0,T;L^2(\Omp))
		\cap W^{1,\infty}(0,T;H^k(\Omp)),\\
		p_p
		&\in W^{3,\infty}(0,T;L^2(\Omp))
		\cap W^{1,\infty}(0,T;H^{k+1}(\Omp)).
	\end{align*}
\end{assumption}

For a function $g\in W^{3,\infty}(0,T;X)$, define
\[
\tau_g^{n+1}
:=
\partial_tg(t^{n+1})-D_tg^{n+1}.
\]
The BDF2 consistency estimate gives
\begin{equation}
	\label{eq:BDF2_consistency}
	\norm{\bm\tau_{\bm u}^{n+1}}{0,\Omf}
	+\norm{\bm\tau_{\bm\eta}^{n+1}}{1,\Omp}
	+\norm{\tau_\Theta^{n+1}}{0,\Omp}
	\le C\Delta t^2,
\end{equation}
where
\[
\bm\tau_{\bm u}^{n+1}
:=\partial_t\bm u_f(t^{n+1})-D_t\bm u_f^{n+1},
\quad
\bm\tau_{\bm\eta}^{n+1}
:=\partial_t\bm\eta(t^{n+1})-D_t\bm\eta^{n+1},
\]
and
\[
\tau_\Theta^{n+1}
:=\partial_t\Theta(t^{n+1})-D_t\Theta^{n+1}.
\]
Such an estimate follows from a Taylor expansion
of the BDF2 difference operator about
$t^{n+1}$; see, for example, \cite{HairerWanner1996}.

\subsection{Error decomposition and error equations}

For each variable $r$ (one component of $(\bm{u}_f, \bm{\eta}, p_f, \xi, p_p)$), write
\[
r^n-r_h^n
=
\underbrace{r^n-\widehat r_h^n}_{\rho_r^n}
+
\underbrace{\widehat r_h^n-r_h^n}_{\theta_r^n}.
\]
Set
\[
\rho_Y^n:=\alpha\rho_{p_p}^n-\rho_\xi^n,
\qquad
\theta_Y^n:=\alpha\theta_{p_p}^n-\theta_\xi^n,
\]
and
\[
\rho_\Theta^n
:=c_p\rho_{p_p}^n-\frac{\alpha}{\lambda_p}\rho_\xi^n,
\qquad
\theta_\Theta^n
:=c_p\theta_{p_p}^n-\frac{\alpha}{\lambda_p}\theta_\xi^n.
\]

We initialize the spatially discrete problem by
$\bm U_h^0=\Pi_h\bm U(0)$, so that $\theta_r^0=0$ for every
component. The value $\bm U_h^1$ is produced by a constraint-compatible
startup method; its error is retained explicitly through
$E_\theta^1$ below.

Subtracting \eqref{eq:BDF2_FE_scheme} from the exact equations at
$t^{n+1}$ and using
\eqref{eq:coupled_proj_pp}--\eqref{eq:coupled_proj_biot_b}, we obtain
\begin{subequations}
	\label{eq:error_system}
	\begin{align}
		&\rho_f(D_t\theta_{\bm u}^{n+1},\bm v_{f,h})_{\Omf}
		+a_f(\theta_{\bm u}^{n+1},\bm v_{f,h})
		-(\theta_{p_f}^{n+1},\nabla\!\cdot\!\bm v_{f,h})_{\Omf}
		\notag\\
		&\quad
		-\frac{\gamma\mu_f}{\sqrt\kappa}
		\langle\Tt D_t\theta_{\bm\eta}^{n+1},
		\Tt\bm v_{f,h}\rangle_{\Gfp}
		+\langle\theta_{p_p}^{n+1},\Tn\bm v_{f,h}\rangle_{\Gfp}
		=\mathcal F_{\bm u}^{n+1}(\bm v_{f,h}),
		\label{eq:error_a}\\
		&a_p(\theta_{\bm\eta}^{n+1},\bm v_{p,h})
		-(\theta_\xi^{n+1},\nabla\!\cdot\!\bm v_{p,h})_{\Omp}
		-\langle\theta_{p_p}^{n+1},\Tn\bm v_{p,h}\rangle_{\Gfp}
		\notag\\
		&\quad
		+\frac{\gamma\mu_f}{\sqrt\kappa}
		\langle
		\Tt(D_t\theta_{\bm\eta}^{n+1}-\theta_{\bm u}^{n+1}),
		\Tt\bm v_{p,h}
		\rangle_{\Gfp}
		=\mathcal F_{\bm\eta}^{n+1}(\bm v_{p,h}),
		\label{eq:error_b}\\
		&-(\nabla\!\cdot\!\theta_{\bm u}^{n+1},w_{f,h})_{\Omf}=0,
		\label{eq:error_c}\\
		&-(\nabla\!\cdot\!\theta_{\bm\eta}^{n+1},w_{s,h})_{\Omp}
		+\frac1{\lambda_p}
		(\alpha\theta_{p_p}^{n+1}-\theta_\xi^{n+1},w_{s,h})_{\Omp}=0,
		\label{eq:error_d}\\
		&(D_t\theta_\Theta^{n+1},w_{p,h})_{\Omp}
		-\langle
		\Tn(\theta_{\bm u}^{n+1}-D_t\theta_{\bm\eta}^{n+1}),
		w_{p,h}
		\rangle_{\Gfp}
		\notag\\
		&\quad
		+\frac{\kappa}{\mu_f}
		(\nabla\theta_{p_p}^{n+1},\nabla w_{p,h})_{\Omp}
		=\mathcal F_{p_p}^{n+1}(w_{p,h}),
		\label{eq:error_e}
	\end{align}
\end{subequations}
where
\begin{align}
	\mathcal F_{\bm u}^{n+1}(\bm v_{f,h})
	&:=
	-\rho_f(D_t\rho_{\bm u}^{n+1}
	+\bm\tau_{\bm u}^{n+1},\bm v_{f,h})_{\Omf}
	\notag\\
	&\quad
	+\frac{\gamma\mu_f}{\sqrt\kappa}
	\langle
	\Tt(D_t\rho_{\bm\eta}^{n+1}
	+\bm\tau_{\bm\eta}^{n+1}),
	\Tt\bm v_{f,h}
	\rangle_{\Gfp},
	\label{eq:F_u}\\
	\mathcal F_{\bm\eta}^{n+1}(\bm v_{p,h})
	&:=
	\frac{\gamma\mu_f}{\sqrt\kappa}
	\langle
	\Tt(\rho_{\bm u}^{n+1}
	-D_t\rho_{\bm\eta}^{n+1}
	-\bm\tau_{\bm\eta}^{n+1}),
	\Tt\bm v_{p,h}
	\rangle_{\Gfp},
	\label{eq:F_eta}\\
	\mathcal F_{p_p}^{n+1}(w_{p,h})
	&:=
	-(D_t\rho_\Theta^{n+1}
	+\tau_\Theta^{n+1},w_{p,h})_{\Omp}
	\notag\\
	&\quad
	+\langle
	\Tn(\rho_{\bm u}^{n+1}
	-D_t\rho_{\bm\eta}^{n+1}
	-\bm\tau_{\bm\eta}^{n+1}),
	w_{p,h}
	\rangle_{\Gfp}.
	\label{eq:F_pp}
\end{align}
The divergence and total-pressure residuals vanish because the coupled projection preserves both algebraic constraints.
We comment here that $\mathcal{F}_{\bm{u}}^{n+1}$ contains no elastic term
from $\rho_{\bm{u}}$, nor does its tangential residual retain
$\rho_{\bm u}^{n+1}$. Likewise, $\mathcal{F}_{\bm{\eta}}^{n+1}$ contains no elastic term from $\rho_{\bm\eta}$. Similarly, the 
term $(\kappa/\mu_f)(\nabla\rho_{p_p},\nabla w_{p,h})$ disappears from $\mathcal{F}_{p_p}^{n+1}$.

\subsection{Error estimates and convergence theorem}
\label{subsec:convergence_theorem}

Define
\begin{align}
	E_\theta^n
	&:=
	\frac{\rho_f}{4}
	\left(
	\norm{\theta_{\bm u}^n}{0,\Omf}^2
	+\norm{2\theta_{\bm u}^n-\theta_{\bm u}^{n-1}}{0,\Omf}^2
	\right)
	+\frac14
	\left(
	\norm{\theta_{\bm\eta}^n}{a_p}^2
	+\norm{2\theta_{\bm\eta}^n-\theta_{\bm\eta}^{n-1}}{a_p}^2
	\right)
	\notag\\
	&\quad
	+\frac{s_0}{4}
	\left(
	\norm{\theta_{p_p}^n}{0,\Omp}^2
	+\norm{2\theta_{p_p}^n-\theta_{p_p}^{n-1}}{0,\Omp}^2
	\right) 
	+\frac1{4\lambda_p}
	\left(
	\norm{\theta_Y^n}{0,\Omp}^2
	+\norm{2\theta_Y^n-\theta_Y^{n-1}}{0,\Omp}^2
	\right),
	\label{eq:def_Etheta}
\end{align}
where
\[
\norm{\bm v}{a_p}^2:=2\mu_p\norm{\bm D(\bm v)}{0,\Omp}^2.
\]
Let $D_\theta^{n+1}$ be the expression in
\eqref{eq:discrete_dissipation} with
$(\bm u_f,\bm\eta,p_p,Y)$ replaced by
$(\theta_{\bm u},\theta_{\bm\eta},\theta_{p_p},\theta_Y)$.

Testing \eqref{eq:error_a}--\eqref{eq:error_e} with
\[
\bm v_{f,h}=4\Delta t\,\theta_{\bm u}^{n+1},
\qquad
\bm v_{p,h}=4\Delta t\,D_t\theta_{\bm\eta}^{n+1},
\qquad
w_{f,h}=-4\Delta t\,\theta_{p_f}^{n+1},
\]
\[
w_{s,h}=0,
\qquad
w_{p,h}=4\Delta t\,\theta_{p_p}^{n+1},
\]
and using the homogeneous constraint \eqref{eq:error_d} at the three
BDF2 levels yields
\[
\nabla\!\cdot D_t\theta_{\bm\eta}^{n+1}
=
\frac1{\lambda_p}D_t\theta_Y^{n+1}.
\]
The same cancellations as in Theorem~\ref{thm:BDF2_energy_stability}
then give
\begin{equation}
	\label{eq:error_energy_identity}
	E_\theta^{n+1}-E_\theta^n
	+\Delta t\,D_\theta^{n+1}
	=
	\Delta t\,
	\mathcal R_\theta^{n+1},
\end{equation}
where
\[
\mathcal R_\theta^{n+1}
=
\mathcal F_{\bm u}^{n+1}(\theta_{\bm u}^{n+1})
+\mathcal F_{\bm\eta}^{n+1}
(D_t\theta_{\bm\eta}^{n+1})
+\mathcal F_{p_p}^{n+1}(\theta_{p_p}^{n+1}).
\]

Set
\[
\mathscr{A}_{h,\Delta t}:=\Delta t^2+h^k.
\]
Using \eqref{eq:coupled_projection_estimate},
\eqref{eq:BDF2_consistency}, the trace, Korn, Poincar\'e, and
Young inequalities, the residual terms not involving
$D_t\theta_{\bm\eta}^{n+1}$ satisfy
\begin{equation}
	\label{eq:nontime_residual_bound}
	\left|
	\mathcal F_{\bm u}^{n+1}(\theta_{\bm u}^{n+1})
	+
	\mathcal F_{p_p}^{n+1}(\theta_{p_p}^{n+1})
	\right|
	\le
	\frac12D_\theta^{n+1}+C \mathscr{A}_{h,\Delta t}^2,
\end{equation}
where $C$ is independent of $h$ and $\Delta t$. It remains to
estimate the term
$\mathcal F_{\bm\eta}^{n+1}(D_t\theta_{\bm\eta}^{n+1})$.

\begin{lemma}[BDF2 summation by parts]
	\label{lem:sbp_time}
	Let $\{G^{n+1}\}_{n\ge1}$ be bounded linear functionals and let
	$z^0=0$. Then
	\begin{align}
		\Delta t\sum_{n=1}^{m}G^{n+1}(D_tz^{n+1})
		&=
		\frac32G^{m+1}(z^{m+1})
		-\frac12G^{m+1}(z^m)
		-\frac32G^2(z^1)
		\notag\\
		&\quad
		-\frac32\sum_{n=2}^{m}
		(G^{n+1}-G^n)(z^n)
		+\frac12\sum_{n=1}^{m-1}
		(G^{n+2}-G^{n+1})(z^n).
		\label{eq:sbp_time_identity}
	\end{align}
\end{lemma}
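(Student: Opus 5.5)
The identity is purely algebraic, so I will prove it by discrete Abel summation (summation by parts), using only the linearity of each functional $G^{n+1}$ and the definition \eqref{eq:BDF2_difference_operator}. The first step is to write the BDF2 quotient as a combination of two first differences:
\[
\Delta t\,D_tz^{n+1}
=\frac32\bigl(z^{n+1}-z^n\bigr)-\frac12\bigl(z^n-z^{n-1}\bigr),
\]
which holds since $\frac32 z^{n+1}-2z^n+\frac12 z^{n-1}$ is exactly $\frac{3z^{n+1}-4z^n+z^{n-1}}{2}$. Applying $G^{n+1}$ and summing over $n=1,\dots,m$, the left-hand side of \eqref{eq:sbp_time_identity} splits as $\frac32 S_1-\frac12 S_2$, where
\[
S_1:=\sum_{n=1}^m G^{n+1}(z^{n+1}-z^n),
\qquad
S_2:=\sum_{n=1}^m G^{n+1}(z^n-z^{n-1}).
\]

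For $S_1$, I shift the index in $\sum_n G^{n+1}(z^{n+1})$ to $k=n+1$. I then pull out the top term $G^{m+1}(z^{m+1})$ and the bottom term $G^2(z^1)$ and pair the remaining terms at equal $z$-index. This gives
\[
S_1=G^{m+1}(z^{m+1})-G^2(z^1)-\sum_{n=2}^{m}(G^{n+1}-G^n)(z^n).
\]
For $S_2$, I shift $\sum_n G^{n+1}(z^{n-1})$ to $k=n-1$ and extract the top term $G^{m+1}(z^m)$ and the bottom term $G^2(z^0)$. This gives
\[
S_2=G^{m+1}(z^m)-G^2(z^0)+\sum_{n=1}^{m-1}(G^{n+1}-G^{n+2})(z^n).
\]
Here I use the hypothesis $z^0=0$ to drop $G^2(z^0)$. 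Combining $\frac32 S_1-\frac12 S_2$ and flipping the sign inside the last sum reproduces \eqref{eq:sbp_time_identity} term by term.

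There is no real analytic obstacle, since the functionals only need to be linear; boundedness matters later, when the identity is used. The main thing to get right is the index bookkeeping at the endpoints. In particular, the $n=1$ BDF2 step must be interpreted with $z^0$ (consistent with $\theta^0=0$ from the initialization $\bm U_h^0=\Pi_h\bm U(0)$). Also, the two sums start at different indices ($n=2$ versus $n=1$) because the boundary term $G^2(z^1)$ is extracted only from $S_1$. If $m=1$, both sums are empty, and the identity reduces to checking $\Delta t\,G^2(D_tz^2)=\frac32G^2(z^2)-\frac12G^2(z^1)-\frac32G^2(z^1)$, which follows directly from the first display with $z^0=0$.
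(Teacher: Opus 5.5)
Your proof is correct and takes the same route as the paper: you split $\Delta t\,D_tz^{n+1}$ into $\frac32(z^{n+1}-z^n)-\frac12(z^n-z^{n-1})$, apply Abel summation to each of the two sums, and use $z^0=0$ to remove the $G^2(z^0)$ boundary term. Your explicit endpoint bookkeeping, including the $m=1$ check, supplies the details that the paper's one-line proof leaves to the reader.
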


\begin{proof}
	Using
	\[
	D_tz^{n+1}
	=
	\frac32\delta_tz^{n+1}
	-\frac12\delta_tz^n,
	\]
	the result follows by applying Abel summation to the two
	backward-difference sums.
\end{proof}

\begin{lemma}[Time-differentiated interface residual]
	\label{lem:Feta_summed_bound}
	Let
	\[
	G^{n+1}:=\mathcal F_{\bm\eta}^{n+1}\in\bm V_p',
	\]
	where
	\[
	G^{n+1}(\bm v_{p,h})
	=
	\frac{\gamma\mu_f}{\sqrt{\kappa}}
	\left\langle
	\Tt\left(
	\rho_{\bm u}^{n+1}
	-D_t\rho_{\bm\eta}^{n+1}
	-\bm\tau_{\bm\eta}^{n+1}
	\right),
	\Tt\bm v_{p,h}
	\right\rangle_{\Gfp}.
	\]
	Under Assumption~\ref{ass:regularity} and
	Proposition~\ref{prop:coupled_projection}, for
	$1\le m\le N-1$ and every $\varepsilon>0$,
	\begin{equation}
		\label{eq:Feta_summed_bound}
		\left|
		\Delta t\sum_{n=1}^{m}
		\mathcal F_{\bm\eta}^{n+1}
		(D_t\theta_{\bm\eta}^{n+1})
		\right|
		\le
		\varepsilon
		\max_{1\le j\le m+1}E_\theta^j
		+
		C_\varepsilon\left(\Delta t^4+h^{2k}\right).
	\end{equation}
	Here $C_\varepsilon$ is independent of $h$ and $\Delta t$, but may
	depend on $T$, the fixed physical parameters, the domain constants,
	and the regularity norms of the exact solution.
\end{lemma}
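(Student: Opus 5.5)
The plan is to apply the BDF2 summation-by-parts identity of Lemma~\ref{lem:sbp_time} with $G^{n+1}:=\mathcal F_{\bm\eta}^{n+1}$ and $z^n:=\theta_{\bm\eta}^n$. The hypothesis $z^0=0$ holds because $\bm U_h^0=\Pi_h\bm U(0)$. This moves the discrete time derivative off $\theta_{\bm\eta}$, which the dissipation does not control (cf. Remark~\ref{rem:solid_force}). It places it instead on the data functional $G$, whose time differences are small. After the transfer, only $\theta_{\bm\eta}^j$ itself appears, and $\|\theta_{\bm\eta}^j\|_{a_p}^2\le 4E_\theta^j$ controls it.

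\textbf{Step 1: norm bounds on $G$.} By the trace inequality and Korn's inequality on $\bm V_p$,
\[
|G^{n+1}(\bm v)|\le C\,\bigl\|\Tt(\rho_{\bm u}^{n+1}-D_t\rho_{\bm\eta}^{n+1}-\bm\tau_{\bm\eta}^{n+1})\bigr\|_{0,\Gfp}\,\|\bm v\|_{a_p}.
\]
Proposition~\ref{prop:coupled_projection}, together with its time-differentiated version (the projection commutes with $\partial_t$ and hence with $D_t$), gives $\|\rho_{\bm u}^{n+1}\|_{1,\Omf}+\|D_t\rho_{\bm\eta}^{n+1}\|_{1,\Omp}\le Ch^k$. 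The consistency estimate \eqref{eq:BDF2_consistency} gives $\|\bm\tau_{\bm\eta}^{n+1}\|_{1,\Omp}\le C\Delta t^2$. Hence $\|G^{n+1}\|_{*}\le C\mathscr A_{h,\Delta t}$ uniformly in $n$.

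\textbf{Step 2: bounds on the differences $G^{n+1}-G^n$.} This is the main obstacle, since we need $\|G^{n+1}-G^n\|_*\le C\Delta t\,\mathscr A_{h,\Delta t}$.
\begin{itemize}
\item For the projection parts, write $\rho_{\bm u}^{n+1}-\rho_{\bm u}^n=\int_{t^n}^{t^{n+1}}\partial_t\rho_{\bm u}\,dt$, so its norm is $\le C\Delta t\,h^k$, using $\bm u_f\in W^{1,\infty}(0,T;\bm H^{k+1})$. Similarly, $D_t\rho_{\bm\eta}^{n+1}-D_t\rho_{\bm\eta}^n$ is a combination of second time derivatives of $\rho_{\bm\eta}$ integrated over $O(\Delta t)$ windows, bounded by $C\Delta t\,h^k$ via $\bm\eta\in W^{2,\infty}(0,T;\bm H^{k+1})$.
\item For the truncation part, use the Peano-kernel representation of $\bm\tau_{\bm\eta}^{n+1}$ in terms of $\partial_t^3\bm\eta$. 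Then $\bm\tau_{\bm\eta}^{n+1}-\bm\tau_{\bm\eta}^{n}$ involves $\partial_t^4\bm\eta$ and has norm $\le C\Delta t^3$ in $\bm H^1$, using $\bm\eta\in W^{4,\infty}(0,T;\bm H^1)$. This is exactly why Assumption~\ref{ass:regularity} asks for four time derivatives.
\end{itemize}
For the first index $n=1$, where the stencil touches the startup level, I would handle the term separately using only Step 1.

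\textbf{Step 3: assembly.} In \eqref{eq:sbp_time_identity}, the boundary terms $\tfrac32G^{m+1}(\theta_{\bm\eta}^{m+1})-\tfrac12G^{m+1}(\theta_{\bm\eta}^m)-\tfrac32G^2(\theta_{\bm\eta}^1)$ are bounded by
\[
C\mathscr A_{h,\Delta t}\max_{j\le m+1}\|\theta_{\bm\eta}^j\|_{a_p}\le \tfrac{\varepsilon}{2}\max_{j\le m+1}E_\theta^j+C_\varepsilon\mathscr A_{h,\Delta t}^2 .
\]
The two difference sums are bounded by
\[
\sum_{n\le m} C\Delta t\,\mathscr A_{h,\Delta t}\|\theta_{\bm\eta}^n\|_{a_p}\le CT\mathscr A_{h,\Delta t}\max_j E_\theta^{j\,1/2},
\]
and Young's inequality again gives $\tfrac{\varepsilon}{2}\max_jE_\theta^j+C_\varepsilon T^2\mathscr A_{h,\Delta t}^2$. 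Since $\mathscr A_{h,\Delta t}^2\le 2(\Delta t^4+h^{2k})$, this yields \eqref{eq:Feta_summed_bound}, with $C_\varepsilon$ depending on $T$, the physical parameters, the trace, Korn and Poincar\'e constants, and the regularity norms.
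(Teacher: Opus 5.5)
Your proposal is correct and follows the paper's proof essentially step for step. You apply the BDF2 summation-by-parts identity with $z^n=\theta_{\bm\eta}^n$ (using $\theta_{\bm\eta}^0=\bm 0$), prove the bounds $\|G^{n+1}\|\le C\mathscr A_{h,\Delta t}$ and $\|G^{n+1}-G^n\|\le C\Delta t\,\mathscr A_{h,\Delta t}$ (the latter via $\partial_t^4\bm\eta$), control $\theta_{\bm\eta}^j$ by $E_\theta^j$ through Korn, and finish with Young's inequality. Your separate treatment of a ``first index'' difference is unnecessary but harmless: $G^{n+1}$ involves only the exact solution and its projection, and every difference appearing in the identity starts at $G^3-G^2$, so the $C\Delta t\,\mathscr A_{h,\Delta t}$ bound already covers it.
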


\begin{proof}
	The trace inequality gives
	\begin{align}
		|G^{n+1}(\bm v_{p,h})|
		&\le
		C
		\Bigl(
		\norm{\rho_{\bm u}^{n+1}}{1,\Omf}
		+
		\norm{D_t\rho_{\bm\eta}^{n+1}}{1,\Omp}
		+
		\norm{\bm\tau_{\bm\eta}^{n+1}}{1,\Omp}
		\Bigr)
		\norm{\bm v_{p,h}}{1,\Omp}.
		\label{eq:G_pointwise_pre}
	\end{align}
	Because the coupled projection is linear and time independent,
	Proposition~\ref{prop:coupled_projection} and
	\eqref{eq:BDF2_consistency} imply
	\[
	\norm{\rho_{\bm u}^{n+1}}{1,\Omf}
	+
	\norm{D_t\rho_{\bm\eta}^{n+1}}{1,\Omp}
	\le Ch^k,
	\qquad
	\norm{\bm\tau_{\bm\eta}^{n+1}}{1,\Omp}
	\le C\Delta t^2.
	\]
	Hence,
	\begin{equation}
		\label{eq:G_pointwise}
		\norm{G^{n+1}}{\bm V_p'}
		\le C \mathscr{A}_{h,\Delta t}.
	\end{equation}
	
	For $n\ge2$,
	\begin{align}
		(G^{n+1}-G^n)(\bm v_{p,h})
		&=
		\frac{\gamma\mu_f}{\sqrt{\kappa}}
		\left\langle
		\Tt\left[
		\rho_{\bm u}^{n+1}-\rho_{\bm u}^{n}
		-
		\left(
		D_t\rho_{\bm\eta}^{n+1}
		-D_t\rho_{\bm\eta}^{n}
		\right)
		\right.\right.
		\notag\\
		&\hspace{3.8cm}\left.\left.
		-
		\left(
		\bm\tau_{\bm\eta}^{n+1}
		-\bm\tau_{\bm\eta}^{n}
		\right)
		\right],
		\Tt\bm v_{p,h}
		\right\rangle_{\Gfp}.
		\label{eq:G_difference_identity}
	\end{align}
	The temporal regularity and the projection estimate yield
	\[
	\norm{\rho_{\bm u}^{n+1}-\rho_{\bm u}^{n}}{1,\Omf}
	+
	\norm{
		D_t\rho_{\bm\eta}^{n+1}
		-D_t\rho_{\bm\eta}^{n}
	}{1,\Omp}
	\le
	C\Delta t\,h^k.
	\]
	Moreover, the fourth time derivative of $\bm\eta$ gives
	\begin{equation}
		\label{eq:tau_eta_difference}
		\norm{
			\bm\tau_{\bm\eta}^{n+1}
			-\bm\tau_{\bm\eta}^{n}
		}{1,\Omp}
		\le
		C\Delta t^3
		\max_{t\in[t^{n-2},t^{n+1}]}
		\norm{\partial_t^4\bm\eta(t)}{1,\Omp}.
	\end{equation}
	Consequently,
	\begin{equation}
		\label{eq:G_difference}
		\norm{G^{n+1}-G^n}{\bm V_p'}
		\le
		C\left(\Delta t\,h^k+\Delta t^3\right)
		=
		C\Delta t \mathscr{A}_{h,\Delta t}.
	\end{equation}
	
	Apply Lemma~\ref{lem:sbp_time} with
	$z^n=\theta_{\bm\eta}^n$. Since
	$\theta_{\bm\eta}^0=\bm0$,
	\begin{align}
		&\Delta t\sum_{n=1}^{m}
		G^{n+1}(D_t\theta_{\bm\eta}^{n+1})
		\notag\\
		&\quad=
		\frac32G^{m+1}(\theta_{\bm\eta}^{m+1})
		-\frac12G^{m+1}(\theta_{\bm\eta}^{m})
		-\frac32G^2(\theta_{\bm\eta}^{1})
		\notag\\
		&\qquad
		-\frac32\sum_{n=2}^{m}
		(G^{n+1}-G^n)(\theta_{\bm\eta}^{n})
		+\frac12\sum_{n=1}^{m-1}
		(G^{n+2}-G^{n+1})(\theta_{\bm\eta}^{n}).
		\label{eq:Feta_after_sbp}
	\end{align}
	
	Set
	\[
	M_m:=\max_{1\le j\le m+1}E_\theta^j.
	\]
	From the definition of $E_\theta^j$ and Korn's inequality,
	\begin{equation}
		\label{eq:theta_eta_energy_control}
		\norm{\theta_{\bm\eta}^{j}}{1,\Omp}
		\le
		C\sqrt{E_\theta^j}
		\le
		C\sqrt{M_m},
		\qquad
		1\le j\le m+1.
	\end{equation}
	Therefore, \eqref{eq:G_pointwise} gives the endpoint estimate
	\[
	\left|
	\frac32G^{m+1}(\theta_{\bm\eta}^{m+1})
	-\frac12G^{m+1}(\theta_{\bm\eta}^{m})
	-\frac32G^2(\theta_{\bm\eta}^{1})
	\right|
	\le
	C \mathscr{A}_{h,\Delta t}\sqrt{M_m}.
	\]
	Similarly, by \eqref{eq:G_difference},
	\eqref{eq:theta_eta_energy_control}, and $m\Delta t\le T$,
	\[
	\sum_{n=2}^{m}
	\left|
	(G^{n+1}-G^n)(\theta_{\bm\eta}^{n})
	\right|
	+
	\sum_{n=1}^{m-1}
	\left|
	(G^{n+2}-G^{n+1})(\theta_{\bm\eta}^{n})
	\right|
	\le
	C \mathscr{A}_{h,\Delta t}\sqrt{M_m}.
	\]
	Substitution into \eqref{eq:Feta_after_sbp} yields
	\[
	\left|
	\Delta t\sum_{n=1}^{m}
	\mathcal F_{\bm\eta}^{n+1}
	(D_t\theta_{\bm\eta}^{n+1})
	\right|
	\le
	C \mathscr{A}_{h,\Delta t}\sqrt{M_m}.
	\]
	Young's inequality and
	\[
	\mathscr{A}_{h,\Delta t}^2=(\Delta t^2+h^k)^2
	\le
	2\Delta t^4+2h^{2k}
	\]
	then prove \eqref{eq:Feta_summed_bound}.
\end{proof}

Combining
\eqref{eq:error_energy_identity},
\eqref{eq:nontime_residual_bound}, and
\eqref{eq:Feta_summed_bound}, and then choosing the index at which
$\max_{1\le j\le m+1}E_\theta^j$ is attained, yields the following
estimate.
\begin{theorem}[Discrete error estimate]
	\label{thm:discrete_error_estimate}
	Under Assumption~\ref{ass:regularity}, for $1\le m\le N-1$,
	\begin{equation}
		\label{eq:error_cumulative}
		\max_{1\le j\le m+1}E_\theta^j
		+\Delta t\sum_{n=1}^{m}D_\theta^{n+1}
		\le
		C\left(E_\theta^1+\Delta t^4+h^{2k}\right).
	\end{equation}
	The constant $C$ is independent of $h$ and $\Delta t$.
\end{theorem}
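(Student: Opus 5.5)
We sum the one-step error energy identity \eqref{eq:error_energy_identity} over $n=1,\dots,m'$ for an arbitrary $1\le m'\le m$. This telescopes to
\[
E_\theta^{m'+1}+\Delta t\sum_{n=1}^{m'}D_\theta^{n+1}
=E_\theta^1+\Delta t\sum_{n=1}^{m'}\Bigl(\mathcal F_{\bm u}^{n+1}(\theta_{\bm u}^{n+1})+\mathcal F_{p_p}^{n+1}(\theta_{p_p}^{n+1})\Bigr)
+\Delta t\sum_{n=1}^{m'}\mathcal F_{\bm\eta}^{n+1}(D_t\theta_{\bm\eta}^{n+1}).
\]
The first residual sum is bounded termwise by \eqref{eq:nontime_residual_bound}. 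This gives at most $\tfrac12\Delta t\sum_{n=1}^{m'}D_\theta^{n+1}+CT\mathscr A_{h,\Delta t}^2$, and the dissipation part is absorbed into the left-hand side. The second residual sum is handled by Lemma~\ref{lem:Feta_summed_bound} with $m$ replaced by $m'$. This gives $\varepsilon\max_{1\le j\le m'+1}E_\theta^j+C_\varepsilon(\Delta t^4+h^{2k})$. Since $m'\le m$, this is at most $\varepsilon M_m+C_\varepsilon(\Delta t^4+h^{2k})$, where $M_m:=\max_{1\le j\le m+1}E_\theta^j$. Using $\mathscr A_{h,\Delta t}^2\le 2\Delta t^4+2h^{2k}$, we obtain for every $1\le m'\le m$:
\[
E_\theta^{m'+1}+\tfrac12\Delta t\sum_{n=1}^{m'}D_\theta^{n+1}\le E_\theta^1+\varepsilon M_m+C_\varepsilon(\Delta t^4+h^{2k}).
\]

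Next we take the maximum over $m'$. Let $j^\ast\in\{1,\dots,m+1\}$ be an index with $E_\theta^{j^\ast}=M_m$. If $j^\ast=1$, then $M_m=E_\theta^1$ and there is nothing to prove. Otherwise we apply the inequality above with $m'=j^\ast-1$, which yields $M_m\le E_\theta^1+\varepsilon M_m+C_\varepsilon(\Delta t^4+h^{2k})$. We fix $\varepsilon=\tfrac12$ and absorb, which gives $M_m\le 2E_\theta^1+C(\Delta t^4+h^{2k})$. Then we return to the inequality with $m'=m$, bound the $\varepsilon M_m$ term using this estimate, and obtain the dissipation bound. Adding the two bounds gives \eqref{eq:error_cumulative}. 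No discrete Gronwall argument is needed, because the right-hand sides are free of $E_\theta^n$ except through $\varepsilon M_m$. Consequently $C$ depends only on $T$ and the constants of \eqref{eq:nontime_residual_bound} and Lemma~\ref{lem:Feta_summed_bound}, and not on $h$ or $\Delta t$.

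The main point requiring care is that Lemma~\ref{lem:Feta_summed_bound} controls the summed $\mathcal F_{\bm\eta}$ residual only by a maximum of the energy, not by the dissipation. The term $D_t\theta_{\bm\eta}^{n+1}$ is not controlled by $D_\theta^{n+1}$; see Remark~\ref{rem:solid_force}. This is why the argument must be carried out for every intermediate $m'$ and then closed at the maximizing index, rather than by a single absorption at the final time. One must also check that the constant in Lemma~\ref{lem:Feta_summed_bound} is uniform in $m'$. This holds because its proof uses only $m'\Delta t\le T$.

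A secondary point is \eqref{eq:nontime_residual_bound} itself. The interface term $\langle \Tn(\cdots),\theta_{p_p}^{n+1}\rangle_{\Gfp}$ is bounded via the trace and Poincar\'e inequalities by $\|\nabla\theta_{p_p}^{n+1}\|$. The $\rho_f(D_t\rho_{\bm u}+\bm\tau_{\bm u},\theta_{\bm u})$ and tangential terms are bounded via Korn's inequality by $\|\bm D(\theta_{\bm u}^{n+1})\|$. The volume term $(D_t\rho_\Theta+\tau_\Theta,\theta_{p_p}^{n+1})$ is likewise bounded via Poincar\'e by $\|\nabla\theta_{p_p}^{n+1}\|$. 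All of these are part of $D_\theta^{n+1}$, so the stated bound holds as assumed in the paper.
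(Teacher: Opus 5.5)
Your proof is correct and is essentially the paper's argument. You sum \eqref{eq:error_energy_identity}, bound the non-time residuals by \eqref{eq:nontime_residual_bound} and the summed $\mathcal F_{\bm\eta}^{n+1}(D_t\theta_{\bm\eta}^{n+1})$ term by Lemma~\ref{lem:Feta_summed_bound}, close at an index where $\max_j E_\theta^j$ is attained with $\varepsilon$ small, and substitute back to recover the dissipation sum. Your version is only more explicit than the paper's (applying the summed inequality at $m'=j^\ast-1$ with the maximum bounded by $M_m$, and tracking the factor $T$ from summing the pointwise residual bound).
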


\begin{proof}
	After summing \eqref{eq:error_energy_identity} from $n=1$ to $m$,
	\eqref{eq:nontime_residual_bound} and
	\eqref{eq:Feta_summed_bound} give
	\[
	E_\theta^{m+1}-E_\theta^1
	+\frac{\Delta t}{2}\sum_{n=1}^{m}D_\theta^{n+1}
	\le
	\varepsilon\max_{1\le j\le m+1}E_\theta^j
	+C_\varepsilon \mathscr{A}_{h,\Delta t}^2.
	\]
	Applying this inequality first at an index where the maximum energy is
	attained and choosing $\varepsilon$ sufficiently small gives
	\[
	\max_{1\le j\le m+1}E_\theta^j
	\le C(E_\theta^1+\mathscr{A}_{h,\Delta t}^2).
	\]
	Substitution into the preceding inequality proves
	\eqref{eq:error_cumulative}.
\end{proof}

We are now in a position to state the main
convergence result for the fully-discrete scheme.

\begin{theorem}[Convergence of the fully-discrete scheme]
	\label{thm:BDF2_convergence}
	Assume \eqref{eq:infsup_stokes_h}--\eqref{eq:infsup_biot_h},
	Assumption~\ref{ass:regularity}, and
	$\bm U_h^0=\Pi_h\bm U(0)$. Then, for $1\le n\le N-1$,
	\begin{align}
		&\norm{\bm u_f(t^{n+1})-\bm u_{f,h}^{n+1}}{0,\Omf}
		+\norm{\bm\eta(t^{n+1})-\bm\eta_h^{n+1}}{1,\Omp}
		\notag\\
		&\quad
		+\sqrt{s_0}\,
		\norm{p_p(t^{n+1})-p_{p,h}^{n+1}}{0,\Omp}
		+\frac1{\sqrt{\lambda_p}}
		\norm{Y(t^{n+1})-Y_h^{n+1}}{0,\Omp}
		\notag\\
		&\qquad
		\le
		C\left(\sqrt{E_\theta^1}+\Delta t^2+h^k\right),
		\label{eq:BDF2_convergence_final}
	\end{align}
	and
	\begin{equation}
		\label{eq:BDF2_convergence_pp_H1}
		\left(\frac{\kappa}{\mu_f}\right)^{1/2}
		\left(
		\Delta t\sum_{j=1}^{n}
		\norm{\nabla(p_p(t^{j+1})-p_{p,h}^{j+1})}{0,\Omp}^2
		\right)^{1/2}
		\le
		C\left(\sqrt{E_\theta^1}+\Delta t^2+h^k\right).
	\end{equation}
	The constant is independent of $h$ and $\Delta t$. For
	$0\le s_0 \le \overline s_0$, $\alpha\in(0,1]$, and
	$\lambda_p \ge \underline\lambda>0$, it is uniform as
	$s_0\to0$ and $\lambda_p\to\infty$ in the weighted norms displayed
	above. It may depend on the fixed coefficients
	$\mu_f,\mu_p,\kappa,\gamma$ and on the regularity norms.
\end{theorem}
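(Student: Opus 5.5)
The proof combines the error splitting $r(t^{n+1})-r_h^{n+1}=\rho_r^{n+1}+\theta_r^{n+1}$ with the triangle inequality. It bounds the projection parts by Proposition~\ref{prop:coupled_projection} and the discrete parts by Theorem~\ref{thm:discrete_error_estimate}. Set $Y:=\alpha p_p-\xi$ and $Y_h:=\alpha p_{p,h}-\xi_h$. Each quantity in \eqref{eq:BDF2_convergence_final} then splits as, for example,
\[
\norm{\bm\eta(t^{n+1})-\bm\eta_h^{n+1}}{1,\Omp}
\le
\norm{\rho_{\bm\eta}^{n+1}}{1,\Omp}+\norm{\theta_{\bm\eta}^{n+1}}{1,\Omp},
\]
and analogously for $\bm u_f$, $\sqrt{s_0}\,p_p$ and $\lambda_p^{-1/2}Y$, using $Y-Y_h=\rho_Y+\theta_Y$.

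\textbf{Projection parts.} Proposition~\ref{prop:coupled_projection} applied at $t^{n+1}$, together with the $L^\infty$-in-time regularity in Assumption~\ref{ass:regularity}, gives $\norm{\rho_{\bm u}^{n+1}}{0,\Omf}+\norm{\rho_{\bm\eta}^{n+1}}{1,\Omp}+\norm{\rho_{p_p}^{n+1}}{1,\Omp}+\norm{\rho_\xi^{n+1}}{0,\Omp}\le Ch^k$. For the weighted terms I use $\sqrt{s_0}\le\sqrt{\overline s_0}$ and
\[
\lambda_p^{-1/2}\norm{\rho_Y^{n+1}}{0,\Omp}\le \underline\lambda^{-1/2}\bigl(\norm{\rho_{p_p}^{n+1}}{0,\Omp}+\norm{\rho_\xi^{n+1}}{0,\Omp}\bigr),
\]
using $\alpha\le1$. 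Both constants remain bounded as $s_0\to0$ and $\lambda_p\to\infty$, since the projection constant itself is uniform in that regime.

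\textbf{Discrete parts.} Theorem~\ref{thm:discrete_error_estimate} gives $E_\theta^{n+1}\le C(E_\theta^1+\Delta t^4+h^{2k})$. By the definition \eqref{eq:def_Etheta}, $E_\theta^{n+1}$ dominates $\tfrac{\rho_f}{4}\norm{\theta_{\bm u}^{n+1}}{0,\Omf}^2$, $\tfrac14\norm{\theta_{\bm\eta}^{n+1}}{a_p}^2$, $\tfrac{s_0}{4}\norm{\theta_{p_p}^{n+1}}{0,\Omp}^2$ and $\tfrac1{4\lambda_p}\norm{\theta_Y^{n+1}}{0,\Omp}^2$. Korn's inequality converts the $a_p$-norm into the $H^1$ norm with a constant depending only on $\mu_p$ and $c_K^p$. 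Taking square roots gives the bound $C(\sqrt{E_\theta^1}+\Delta t^2+h^k)$ for every discrete part, with the $s_0$ and $\lambda_p$ weights matching exactly those in the statement. No inverse of $s_0$ or positive power of $\lambda_p$ appears, so uniformity is preserved.

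\textbf{Gradient estimate \eqref{eq:BDF2_convergence_pp_H1}.} The dissipation $D_\theta^{j+1}$ contains $\tfrac{\kappa}{\mu_f}\norm{\nabla\theta_{p_p}^{j+1}}{0,\Omp}^2$, so the cumulative part of \eqref{eq:error_cumulative} bounds $\tfrac{\kappa}{\mu_f}\Delta t\sum_j\norm{\nabla\theta_{p_p}^{j+1}}{0,\Omp}^2$. For the projection part, $\Delta t\sum_j\norm{\nabla\rho_{p_p}^{j+1}}{0,\Omp}^2\le T\,Ch^{2k}$. Combining the two via $(a+b)^2\le2a^2+2b^2$ and taking the square root finishes the estimate.

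\textbf{Main obstacle.} The only delicate point is bookkeeping: checking that every constant inherited from Proposition~\ref{prop:coupled_projection} and Theorem~\ref{thm:discrete_error_estimate} stays uniform in $s_0$ and $\lambda_p$. Within this argument the only new places where these parameters enter are the weights $\sqrt{s_0}$ and $\lambda_p^{-1/2}$ on the projection parts. They are handled by the bounds $s_0\le\overline s_0$ and $\lambda_p\ge\underline\lambda$, which is precisely why those hypotheses appear in the statement. Everything else is a direct triangle-inequality argument.
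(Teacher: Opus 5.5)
Your proposal is correct and follows the same route as the paper. You split each error as $\rho_r^{n+1}+\theta_r^{n+1}$, bound the projection parts by Proposition~\ref{prop:coupled_projection}, and bound the discrete parts by Theorem~\ref{thm:discrete_error_estimate} with Korn's inequality, handling the accumulated gradient through the Darcy term in $D_\theta^{n+1}$; you just spell out the parameter bookkeeping ($s_0\le\overline s_0$, $\lambda_p\ge\underline\lambda$, $\alpha\le1$) that the paper leaves implicit.
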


\begin{proof}
	The result follows from the decomposition
	$r^{n+1}-r_h^{n+1}=\rho_r^{n+1}+\theta_r^{n+1}$,
	the projection estimate \eqref{eq:coupled_projection_estimate},
	Theorem~\ref{thm:discrete_error_estimate}, and Korn's inequality.
	The accumulated pore-pressure gradient estimate follows from the
	Darcy term in $D_\theta^{n+1}$ and the corresponding projection
	estimate.
\end{proof}

\begin{corollary}[Second-order startup]
	\label{cor:second_order_startup}
	If the startup method satisfies
	\[
	E_\theta^1\le C(\Delta t^4+h^{2k}),
	\]
	then
	\[
	\begin{aligned}
		&\norm{\bm u_f(t^{n+1})-\bm u_{f,h}^{n+1}}{0,\Omf}
		+\norm{\bm\eta(t^{n+1})-\bm\eta_h^{n+1}}{1,\Omp}\\
		&\quad
		+\sqrt{s_0}\,
		\norm{p_p(t^{n+1})-p_{p,h}^{n+1}}{0,\Omp}
		+\frac1{\sqrt{\lambda_p}}
		\norm{Y(t^{n+1})-Y_h^{n+1}}{0,\Omp}
		\le C(\Delta t^2+h^k),
	\end{aligned}
	\]
	and the same rate holds in the accumulated Darcy norm
	\eqref{eq:BDF2_convergence_pp_H1}.
\end{corollary}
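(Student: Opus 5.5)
This corollary is a direct substitution into Theorem~\ref{thm:BDF2_convergence}. I would take the startup hypothesis $E_\theta^1\le C(\Delta t^4+h^{2k})$ and use $\sqrt{a+b}\le\sqrt a+\sqrt b$ to obtain $\sqrt{E_\theta^1}\le C(\Delta t^2+h^k)$. Inserting this into \eqref{eq:BDF2_convergence_final} bounds the right-hand side by $C(\sqrt{E_\theta^1}+\Delta t^2+h^k)\le C'(\Delta t^2+h^k)$, which gives the displayed pointwise estimate. The same substitution in \eqref{eq:BDF2_convergence_pp_H1} gives the accumulated Darcy-norm rate.

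The only point needing care is that the new constant keeps the parameter uniformity claimed in Theorem~\ref{thm:BDF2_convergence}. The constant from the startup hypothesis is given and simply multiplies, so $C'$ stays independent of $h$ and $\Delta t$. It is uniform in $s_0\to0$ and $\lambda_p\to\infty$ exactly when the startup constant is. For that reason I would state the hypothesis with a constant that has the same dependence, namely on $\mu_f,\mu_p,\kappa,\gamma$ and the regularity norms only.

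For completeness I would also say why the hypothesis is attainable. The backward Euler startup with $\bm U_h^0=\Pi_h\bm U(0)$ has local truncation error $O(\Delta t^2)$ over a single step. A one-step energy argument, the backward Euler analogue of \eqref{eq:error_energy_identity} tested as in Theorem~\ref{thm:BDF2_energy_stability}, then gives $E_\theta^1\le C(\Delta t^4+h^{2k})$. In that single step the $\mathcal F_{\bm\eta}$ term is harmless: since $\theta^0_{\bm\eta}=\bm 0$, the quantity $\Delta t\,\delta_t\theta^1_{\bm\eta}=\theta^1_{\bm\eta}$ is controlled by the energy. This remark is optional, because the corollary assumes the bound. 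With the hypothesis taken as given, I expect no real obstacle.
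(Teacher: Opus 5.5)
Your substitution argument is how the corollary follows in the paper, which gives no separate proof. Inserting $\sqrt{E_\theta^1}\le C(\Delta t^2+h^k)$ into \eqref{eq:BDF2_convergence_final} and \eqref{eq:BDF2_convergence_pp_H1} of Theorem~\ref{thm:BDF2_convergence} is all that is needed, and your comment on how the constant inherits its parameter dependence is appropriate.

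The ``optional'' attainability paragraph, however, is wrong and should be dropped. The paper itself warns in Remark~\ref{rem:BE_startup} that this bound does not follow merely from using backward Euler.

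\begin{itemize}
\item In a backward Euler step the truncation errors are $\partial_t g(t^1)-\delta_t g^1=O(\Delta t)$, not $O(\Delta t^2)$.
\item Your handling of $\mathcal F_{\bm\eta}^1$ is where the argument fails. The identity $\Delta t\,\mathcal F_{\bm\eta}^1(\delta_t\theta_{\bm\eta}^1)=\mathcal F_{\bm\eta}^1(\theta_{\bm\eta}^1)$ leaves no spare factor of $\Delta t$. Since $\bm\tau_{\bm\eta}^1$ is only $O(\Delta t)$, we have $\norm{\mathcal F_{\bm\eta}^1}{\bm V_p'}\le C(\Delta t+h^k)$. Young's inequality then gives only $\norm{\theta_{\bm\eta}^1}{a_p}^2\le C(\Delta t^2+h^{2k})$, which is a first-order bound on $\sqrt{E_\theta^1}$.
\item The interface trace terms containing $\bm\tau_{\bm\eta}^1$ in $\mathcal F_{\bm u}^1$ and $\mathcal F_{p_p}^1$ have a similar problem. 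They are not controlled by the $L^2$-type energy, and absorbing them into $\Delta t\,D_\theta^1$ leaves a remainder of order $\Delta t^3$, not $\Delta t^4$.
\end{itemize}

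The classical $O(\Delta t^2)$ one-step error of backward Euler therefore needs an additional uniform one-step resolvent estimate for $\frac{1}{\Delta t}\mathcal N_h+\mathcal M_h$. This is exactly what the paper says must be proved separately or kept as a hypothesis. Because the corollary assumes the startup bound, your proof of the corollary itself is unaffected. Only the claim that the bound is attainable by a one-step energy argument is unjustified.
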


The estimate above controls the combined pressure
$Y=\alpha p_p-\xi$. A separate pointwise estimate for the fluid
pressure $p_f$, or a parameter-uniform pointwise estimate for $\xi$, requires additional discrete time-derivative and inf--sup estimates; such estimates are not asserted here.

\begin{remark}[Parameter dependence]
	\label{rem:param_robust}
	The estimates
	\eqref{eq:BDF2_convergence_final}--%
	\eqref{eq:BDF2_convergence_pp_H1}
	are formulated in the parameter-weighted quantities
	\[
	\sqrt{s_0}\,\|p_p-p_{p,h}\|_{0,\Omega_p},
	\qquad
	\lambda_p^{-1/2}\|Y-Y_h\|_{0,\Omega_p},
	\qquad
	\left(\frac{\kappa}{\mu_f}\right)^{1/2}
	\left(
	\Delta t\sum_{j=1}^{n}
	\|\nabla(p_p-p_{p,h})^{j+1}\|_{0,\Omega_p}^{2}
	\right)^{1/2},
	\]
	where \(Y=\alpha p_p-\xi\). For
	$0\le s_0\le \overline s_0,
	\alpha\in(0,1], \lambda_p\ge\underline\lambda>0$,
	the constants may be chosen uniformly as \(s_0\to0\) and
	\(\lambda_p\to\infty\), provided the regularity norms and discrete
	inf--sup constants remain uniform.
	
	These estimates do not imply parameter-uniform bounds for
	\(p_p\) or \(\xi\) in unweighted norms. Indeed, an unweighted
	estimate for \(p_p\) generally introduces a factor \(s_0^{-1/2}\).
	Moreover, since
	\[
	\xi=\alpha p_p-Y,
	\]
	a separate estimate for \(\xi\) inherits the lack of uniform control
	of \(p_p\), together with the factor \(\lambda_p^{1/2}\) arising
	from the weighted \(Y\)-estimate. Additional inf--sup or
	time-derivative estimates would be required to obtain sharper
	pointwise bounds for the pressure variables.
	
	No uniform robustness with respect to the singular limit
	\(\kappa\to0\) is asserted here. The pore-pressure gradient is
	controlled only in the weighted Darcy norm
	\((\kappa/\mu_f)^{1/2}\|\nabla(p_p-p_{p,h})\|\), and the constants in
	the coupled projection and trace estimates may depend on the fixed
	coefficients \(\kappa\), \(\mu_f\), and \(\gamma\). In particular,
	the Beavers--Joseph--Saffman coefficient
	\(\gamma\mu_f/\sqrt{\kappa}\) becomes singular as \(\kappa\to0\).
\end{remark}

For the startup accuracy, the second-order estimate in
Corollary~\ref{cor:second_order_startup} requires
\[
E_\theta^1\le C(\Delta t^4+h^{2k}).
\]
Without such a startup analysis, the above bound should be retained as an explicit assumption.
A single backward Euler startup can satisfy this bound when it is
initialized from compatible projected data and a uniform one-step
resolvent estimate is available.  

\begin{remark}[Backward Euler as a startup method]
	\label{rem:BE_startup}
	The estimate
	\[
	E_\theta^1\le C\bigl(\Delta t^4+h^{2k}\bigr)
	\]
	does not follow merely from the use of the backward Euler method.
	Backward Euler is globally first-order accurate when applied over a
	fixed time interval, but a single backward Euler step starting from
	exact, or sufficiently accurate and constraint-compatible, initial
	data has a local solution error of order $O(\Delta t^2)$.
	
	For the present Stokes--Biot problem, the system has a
	differential--algebraic structure and contains time derivatives in
	the interface coupling. Consequently, the estimate above additionally
	requires a uniform one-step stability estimate for the discrete
	backward Euler operator
	\[
	\frac{1}{\Delta t}\mathcal N_h+\mathcal M_h.
	\]
	Thus, the second-order startup estimate can be proved under
	such a one-step resolvent bound or retained as an explicit assumption. 
\end{remark}

\section{Numerical experiments}
\label{sec:numerical}

To verify the temporal convergence of the proposed monolithic BDF2
scheme, we consider a manufactured solution in a coupled
fluid--poroelastic configuration. The computational domains are
\[
\Omega_f = (0,1)\times(0,1),
\qquad
\Omega_p = (0,1)\times(-1,0),
\qquad
\Gamma_{fp} = (0,1)\times\{0\},
\]
with outward unit normal vectors and tangential direction on
$\Gamma_{fp}$ given by
\[
\bm{n}_f =
\begin{pmatrix} 0 \\ -1 \end{pmatrix},
\qquad
\bm{n}_p =
\begin{pmatrix} 0 \\ 1 \end{pmatrix},
\qquad
\bm{\tau} =
\begin{pmatrix} 1 \\ 0 \end{pmatrix}.
\]
This geometry is convenient for implementation in \texttt{FreeFEM++}
since the interface is flat and mesh-aligned. The same manufactured
solution was used in \cite{GuoLi2026}, allowing direct comparison
with the decoupled scheme studied there.

\medskip
\noindent
The exact solution is prescribed as
\begin{align*}
	\bm{u}_f(x,y,t)
	&= \pi\cos(\pi t)
	\begin{pmatrix}
		-3x + \cos(y) \\ y + 1
	\end{pmatrix}, \qquad
	p_f(x,y,t)
	= e^{t}\sin(\pi x)\cos\!\Bigl(\tfrac{\pi y}{2}\Bigr)
	+ 2\pi\cos(\pi t), \\
	\bm{\eta}(x,y,t)
	&= \sin(\pi t)
	\begin{pmatrix}
		-3x + \cos(y) \\ y + 1
	\end{pmatrix}, \qquad
	p_p(x,y,t)
	= e^{t}\sin(\pi x)\cos\!\Bigl(\tfrac{\pi y}{2}\Bigr),
\end{align*}
and the total pressure is recovered from the definition
\eqref{eq:xi_definition}.
The forcing and source terms, $\bm{f}_f$, $\bm{f}_s$, $S_f$, and $S_p$, together with the exterior boundary data and any nonhomogeneous interface data, are obtained by substituting the exact solution into the strong formulation of the coupled Stokes--Biot system. In the finite element implementation, the nonhomogeneous Dirichlet boundary conditions are imposed using the standard lifting technique.

The physical parameters are
\[
\mu_f = 1, \quad \rho_f = 1, \quad
\mu_p = 1, \quad \lambda_p = 1, \quad
\alpha = 1, \quad s_0 = 0.1, \quad
\kappa = 1, \quad \gamma = 0.
\]
The choice $\gamma = 0$ deactivates the tangential
Beavers--Joseph--Saffman (BJS) friction condition, resulting in a
frictionless interface. This allows us to isolate the normal
coupling mechanism and assess the temporal accuracy of the scheme
without additional interface dissipation effects. Note that the
stability and convergence analysis in
Sections~\ref{sec:analysis}--\ref{sec:convergence} remains valid for
all $\gamma \ge 0$.

\subsection{Temporal convergence}
\label{subsec:temporal}
To assess the temporal accuracy of the proposed monolithic
BDF2 scheme, 
the spatial discretizations use $k=2$, namely, $\bm{P}_2 /P_1$ Taylor–Hood pairs and $P_2$ elements for pore pressure. 
A small mesh size  
$h \approx 1/128$ (39{,}280 triangles and
19{,}897 vertices) is taken so that spatial discretization errors are negligible relative to temporal errors over the range of time
steps considered. The final time is set to $T=0.5$, and the
time step is successively refined according to $
\Delta t \in
\left\{
0.1,\;
0.05,\;
0.025,\;
0.0125
\right\}.
$
The numerical solution is initialized using the exact solution
at $t^0=0$. The first time level is computed using a Backward
Euler step, after which the BDF2 discretization is employed
for all subsequent time levels, consistent with the fully
discrete scheme described in
Section~\ref{sec:BDF2}.
Table~\ref{tab:temporal} reports the errors at the final
time $T = 0.5$ and the observed convergence rates
\[
r = \log_2\!\Bigl(\frac{e(\Delta t)}{e(\Delta t/2)}\Bigr),
\]

\begin{table}[h!]
	\centering
	\caption{Temporal convergence of the monolithic BDF2 scheme.
		Fixed mesh $h \approx 1/128$, $T = 0.5$, standard Backward Euler
		startup at $n=1$, BDF2 for all $n \geq 2$.}
	\label{tab:temporal}
	\footnotesize
	\setlength{\tabcolsep}{3pt}
	\renewcommand{\arraystretch}{1.2}
	\resizebox{\textwidth}{!}{%
		\begin{tabular}{c cc cc cc cc cc cc cc cc}
			\toprule
			$\Delta t$
			& $\|e_{\bm{u}_f}\|_{0,\Omega_f}$ & $r$
			& $\|e_{\bm{u}_f}\|_{1,\Omega_f}$ & $r$
			& $\|e_{\bm{\eta}}\|_{0,\Omega_p}$ & $r$
			& $\|e_{\bm{\eta}}\|_{1,\Omega_p}$ & $r$
			& $\|e_{p_f}\|_{0,\Omega_f}$ & $r$
			& $\|e_{\xi}\|_{0,\Omega_p}$ & $r$
			& $\|e_{p_p}\|_{1,\Omega_p}$ & $r$
			& $\|e_{p_p}\|_{0,\Omega_p}$ & $r$ \\
			\midrule
			$0.10000$
			& $1.949\!\times\!10^{-3}$ & $-$
			& $1.320\!\times\!10^{-2}$ & $-$
			& $1.378\!\times\!10^{-3}$ & $-$
			& $5.487\!\times\!10^{-3}$ & $-$
			& $2.662\!\times\!10^{-1}$ & $-$
			& $1.303\!\times\!10^{-2}$ & $-$
			& $4.636\!\times\!10^{-2}$ & $-$
			& $1.220\!\times\!10^{-2}$ & $-$ \\
			\addlinespace
			$0.05000$
			& $4.878\!\times\!10^{-4}$ & $2.0$
			& $3.335\!\times\!10^{-3}$ & $2.0$
			& $2.657\!\times\!10^{-4}$ & $2.4$
			& $1.061\!\times\!10^{-3}$ & $2.4$
			& $6.704\!\times\!10^{-2}$ & $2.0$
			& $2.530\!\times\!10^{-3}$ & $2.4$
			& $9.002\!\times\!10^{-3}$ & $2.4$
			& $2.382\!\times\!10^{-3}$ & $2.4$ \\
			\addlinespace
			$0.02500$
			& $1.237\!\times\!10^{-4}$ & $2.0$
			& $8.476\!\times\!10^{-4}$ & $2.0$
			& $5.659\!\times\!10^{-5}$ & $2.2$
			& $2.264\!\times\!10^{-4}$ & $2.2$
			& $1.672\!\times\!10^{-2}$ & $2.0$
			& $5.418\!\times\!10^{-4}$ & $2.2$
			& $1.930\!\times\!10^{-3}$ & $2.2$
			& $5.120\!\times\!10^{-4}$ & $2.2$ \\
			\addlinespace
			$0.01250$
			& $3.108\!\times\!10^{-5}$ & $2.0$
			& $2.133\!\times\!10^{-4}$ & $2.0$
			& $1.293\!\times\!10^{-5}$ & $2.1$
			& $5.211\!\times\!10^{-5}$ & $2.1$
			& $4.169\!\times\!10^{-3}$ & $2.0$
			& $1.261\!\times\!10^{-4}$ & $2.1$
			& $4.483\!\times\!10^{-4}$ & $2.1$
			& $1.177\!\times\!10^{-4}$ & $2.1$ \\
			\bottomrule
	\end{tabular}}
\end{table}

\noindent
The results confirm second-order convergence in time for all
error quantities, in agreement with
Theorem~\ref{thm:BDF2_convergence}.
The fluid velocity $\bm{u}_f$ and fluid pressure $p_f$
exhibit uniform convergence rates of approximately two
throughout the refinement sequence. The displacement
$\bm{\eta}$, total pressure $\xi$, and pore pressure
$p_p$ display slightly larger rates on the coarser levels,
but these gradually approach the theoretical value of two
as the time step is refined. This behaviour is typical of
the pre-asymptotic regime and indicates convergence toward
the asymptotic range predicted by the analysis.

The use of a Backward Euler startup step does not affect
the observed second-order convergence. Despite the
first-order initialization, all variables recover the
expected temporal accuracy, confirming that the global
error is governed by the BDF2 discretization.

The pore-pressure errors provide additional verification
of the theoretical estimates. In particular, the
$L^2(\Omega_p)$ error of $p_p$ exhibits second-order
convergence, consistent with the pointwise-in-time estimate
\eqref{eq:BDF2_convergence_final}. Likewise, the observed
$H^1(\Omega_p)$ convergence of $p_p$ is in agreement with
the estimate \eqref{eq:BDF2_convergence_pp_H1}. Overall,
the numerical results provide strong confirmation of the
second-order temporal accuracy established in
Theorem~\ref{thm:BDF2_convergence}.

\subsection{Spatial convergence}
\label{subsec:spatial}

\noindent
To assess the spatial accuracy of the proposed monolithic
BDF2 scheme, the time step is fixed at
$\Delta t=10^{-3}$ so that temporal discretization errors
are negligible relative to spatial discretization errors on
all meshes considered. The mesh is successively refined from
$h\approx 1/16$ to $h\approx 1/128$, while the final time is
kept fixed at $T=0.5$. The numerical solution is initialized
using the exact solution at $t^0=0$, the first time level is
computed using a Backward Euler step, and the BDF2 scheme is
applied thereafter.
\medskip
\[
\log_2
\left(
\frac{e(h)}
{e(h/2)}
\right),
\]
where $e(h)$ denotes the corresponding error norm.
Convergence rates are reported only from the second mesh
level onward.

\begin{table}[h!]
	\centering
	\caption{Spatial convergence of the monolithic BDF2
		scheme. Fixed time step $\Delta t = 0.001$,
		$T = 0.5$, $N = 500$.}
	\label{tab:spatial}
	\footnotesize
	\setlength{\tabcolsep}{3pt}
	\renewcommand{\arraystretch}{1.2}
	\resizebox{\textwidth}{!}{
		\begin{tabular}{c cc cc cc cc cc cc cc cc}
			\toprule
			$h$
			& $\|e_{\bm u_f}\|_{0,\Omf}$ & $r$
			& $\|e_{\bm u_f}\|_{1,\Omf}$ & $r$
			& $\|e_{\bm\eta}\|_{0,\Omp}$ & $r$
			& $\|e_{\bm\eta}\|_{1,\Omp}$ & $r$
			& $\|e_{p_f}\|_{0,\Omf}$ & $r$
			& $\|e_{\xi}\|_{0,\Omp}$ & $r$
			& $\|e_{p_p}\|_{1,\Omp}$ & $r$
			& $\|e_{p_p}\|_{0,\Omp}$ & $r$ \\
			\midrule
			$1/16$
			& $4.318\!\times\!10^{-4}$ & $-$
			& $9.948\!\times\!10^{-3}$ & $-$
			& $5.091\!\times\!10^{-4}$ & $-$
			& $1.049\!\times\!10^{-2}$ & $-$
			& $3.323\!\times\!10^{-2}$ & $-$
			& $3.297\!\times\!10^{-2}$ & $-$
			& $8.558\!\times\!10^{-2}$ & $-$
			& $2.741\!\times\!10^{-3}$ & $-$ \\
			\addlinespace
			$1/32$
			& $5.233\!\times\!10^{-5}$ & $3.0$
			& $2.093\!\times\!10^{-3}$ & $2.2$
			& $4.423\!\times\!10^{-5}$ & $3.5$
			& $1.785\!\times\!10^{-3}$ & $2.6$
			& $6.808\!\times\!10^{-3}$ & $2.3$
			& $6.432\!\times\!10^{-3}$ & $2.4$
			& $2.013\!\times\!10^{-2}$ & $2.1$
			& $3.564\!\times\!10^{-4}$ & $2.9$ \\
			\addlinespace
			$1/64$
			& $5.427\!\times\!10^{-6}$ & $3.3$
			& $4.260\!\times\!10^{-4}$ & $2.3$
			& $4.711\!\times\!10^{-6}$ & $3.2$
			& $3.666\!\times\!10^{-4}$ & $2.3$
			& $1.567\!\times\!10^{-3}$ & $2.1$
			& $1.424\!\times\!10^{-3}$ & $2.2$
			& $4.686\!\times\!10^{-3}$ & $2.1$
			& $3.940\!\times\!10^{-5}$ & $3.2$ \\
			\addlinespace
			$1/128$
			& $5.596\!\times\!10^{-7}$ & $3.3$
			& $8.315\!\times\!10^{-5}$ & $2.4$
			& $6.019\!\times\!10^{-7}$ & $3.0$
			& $8.550\!\times\!10^{-5}$ & $2.1$
			& $3.502\!\times\!10^{-4}$ & $2.2$
			& $3.542\!\times\!10^{-4}$ & $2.0$
			& $1.125\!\times\!10^{-3}$ & $2.1$
			& $5.186\!\times\!10^{-6}$ & $2.9$ \\
			\bottomrule
		\end{tabular}
	}
\end{table}
\noindent
The results demonstrate optimal spatial convergence for all
primary variables, in agreement with
Theorem~\ref{thm:BDF2_convergence}. The fluid velocity
$\bm{u}_f$ and solid displacement $\bm{\eta}$ exhibit
approximately second-order convergence in the $H^1$ norm and
higher-order convergence in the $L^2$ norm. The fluid
pressure $p_f$, total pressure $\xi$, and pore pressure
$p_p$ all converge at approximately second order in their
respective norms.

The observed rates remain stable throughout the refinement
sequence and are fully consistent with the theoretical error
estimates. In particular, the $L^2$ errors of the velocity
and displacement decrease at a rate exceeding that observed
in the corresponding $H^1$ norms, reflecting the improved
accuracy commonly observed for these variables on sufficiently
fine meshes.

Since the time step is fixed at $\Delta t=10^{-3}$, the
temporal discretization error remains negligible throughout
the experiment. Consequently, the observed convergence rates
accurately reflect the spatial approximation properties of
the finite element discretization. Overall, the numerical
results provide strong confirmation of the spatial accuracy
predicted by Theorem~\ref{thm:BDF2_convergence}.

\subsection{Parameter robustness tests}
\label{subsec:robustness}
\noindent
To investigate the robustness of the proposed method with
respect to the physical parameters, we repeat the spatial
convergence study under several parameter regimes while
keeping $\Delta t=10^{-3}$ and $T=0.5$ fixed. At this time
step, temporal discretization errors are negligible, so the
observed behavior reflects the spatial approximation
properties of the method.

We first consider the influence of the Lam\'e parameter
$\lambda_p$, which is known to cause locking difficulties in
classical displacement--pressure formulations of Biot's
equations as the nearly incompressible limit is approached.
To assess robustness in this regime, we compare the cases
$\nu_p=0.3$ ($\lambda_p\approx0.577$) and
$\nu_p=0.4999$ ($\lambda_p\approx1666.4$), spanning nearly
three orders of magnitude in $\lambda_p$.

Table~\ref{tab:robust_nu} reports the corresponding errors
and convergence rates.
\begin{table}[H]
	\centering
	\caption{Parameter test for $\lambda_p$:
		$\nu_p \in \{0.3,\, 0.4999\}$,
		$\kappa = 1$, $s_0 = 1$, $\alpha = 1$,
		$\Delta t = 10^{-3}$, $T = 0.5$.}
	\label{tab:robust_nu}
	\footnotesize
	\setlength{\tabcolsep}{3pt}
	\renewcommand{\arraystretch}{1.2}
	\resizebox{\textwidth}{!}{
		\begin{tabular}{c cc cc cc cc cc cc cc cc}
			\toprule
			\multicolumn{17}{c}{$\nu_p = 0.3$,\quad
				$\lambda_p = 0.577$,\quad $\mu_p = 0.385$} \\
			\midrule
			$h$
			& $\|e_{\bm u_f}\|_{0,\Omf}$ & $r$
			& $\|e_{\bm u_f}\|_{1,\Omf}$ & $r$
			& $\|e_{\bm\eta}\|_{0,\Omp}$ & $r$
			& $\|e_{\bm\eta}\|_{1,\Omp}$ & $r$
			& $\|e_{p_f}\|_{0,\Omf}$ & $r$
			& $\|e_{\xi}\|_{0,\Omp}$ & $r$
			& $\|e_{p_p}\|_{1,\Omp}$ & $r$
			& $\|e_{p_p}\|_{0,\Omp}$ & $r$ \\
			\midrule
			$1/16$
			& $4.321\!\times\!10^{-4}$ & $-$
			& $9.949\!\times\!10^{-3}$ & $-$
			& $1.324\!\times\!10^{-3}$ & $-$
			& $2.709\!\times\!10^{-2}$ & $-$
			& $3.323\!\times\!10^{-2}$ & $-$
			& $3.297\!\times\!10^{-2}$ & $-$
			& $8.562\!\times\!10^{-2}$ & $-$
			& $2.682\!\times\!10^{-3}$ & $-$ \\
			\addlinespace
			$1/32$
			& $5.246\!\times\!10^{-5}$ & $3.0$
			& $2.094\!\times\!10^{-3}$ & $2.2$
			& $1.211\!\times\!10^{-4}$ & $3.5$
			& $4.604\!\times\!10^{-3}$ & $2.6$
			& $6.812\!\times\!10^{-3}$ & $2.3$
			& $6.433\!\times\!10^{-3}$ & $2.4$
			& $2.013\!\times\!10^{-2}$ & $2.1$
			& $3.633\!\times\!10^{-4}$ & $2.9$ \\
			\addlinespace
			$1/64$
			& $5.454\!\times\!10^{-6}$ & $3.3$
			& $4.261\!\times\!10^{-4}$ & $2.3$
			& $1.601\!\times\!10^{-5}$ & $2.9$
			& $9.416\!\times\!10^{-4}$ & $2.3$
			& $1.569\!\times\!10^{-3}$ & $2.1$
			& $1.425\!\times\!10^{-3}$ & $2.2$
			& $4.688\!\times\!10^{-3}$ & $2.1$
			& $4.633\!\times\!10^{-5}$ & $3.0$ \\
			\addlinespace
			$1/128$
			& $5.663\!\times\!10^{-7}$ & $3.3$
			& $8.315\!\times\!10^{-5}$ & $2.4$
			& $3.276\!\times\!10^{-6}$ & $2.3$
			& $2.192\!\times\!10^{-4}$ & $2.1$
			& $3.513\!\times\!10^{-4}$ & $2.2$
			& $3.543\!\times\!10^{-4}$ & $2.0$
			& $1.126\!\times\!10^{-3}$ & $2.1$
			& $8.816\!\times\!10^{-6}$ & $2.4$ \\
			\midrule
			\multicolumn{17}{c}{$\nu_p = 0.4999$,\quad
				$\lambda_p = 1666.4$,\quad $\mu_p = 0.333$} \\
			\midrule
			$h$
			& $\|e_{\bm u_f}\|_{0,\Omf}$ & $r$
			& $\|e_{\bm u_f}\|_{1,\Omf}$ & $r$
			& $\|e_{\bm\eta}\|_{0,\Omp}$ & $r$
			& $\|e_{\bm\eta}\|_{1,\Omp}$ & $r$
			& $\|e_{p_f}\|_{0,\Omf}$ & $r$
			& $\|e_{\xi}\|_{0,\Omp}$ & $r$
			& $\|e_{p_p}\|_{1,\Omp}$ & $r$
			& $\|e_{p_p}\|_{0,\Omp}$ & $r$ \\
			\midrule
			$1/16$
			& $4.317\!\times\!10^{-4}$ & $-$
			& $9.948\!\times\!10^{-3}$ & $-$
			& $1.421\!\times\!10^{-3}$ & $-$
			& $3.103\!\times\!10^{-2}$ & $-$
			& $3.323\!\times\!10^{-2}$ & $-$
			& $3.308\!\times\!10^{-2}$ & $-$
			& $8.557\!\times\!10^{-2}$ & $-$
			& $2.765\!\times\!10^{-3}$ & $-$ \\
			\addlinespace
			$1/32$
			& $5.230\!\times\!10^{-5}$ & $3.0$
			& $2.093\!\times\!10^{-3}$ & $2.2$
			& $1.308\!\times\!10^{-4}$ & $3.4$
			& $5.305\!\times\!10^{-3}$ & $2.5$
			& $6.808\!\times\!10^{-3}$ & $2.3$
			& $6.437\!\times\!10^{-3}$ & $2.4$
			& $2.013\!\times\!10^{-2}$ & $2.1$
			& $3.584\!\times\!10^{-4}$ & $2.9$ \\
			\addlinespace
			$1/64$
			& $5.426\!\times\!10^{-6}$ & $3.3$
			& $4.260\!\times\!10^{-4}$ & $2.3$
			& $1.369\!\times\!10^{-5}$ & $3.3$
			& $1.084\!\times\!10^{-3}$ & $2.3$
			& $1.567\!\times\!10^{-3}$ & $2.1$
			& $1.425\!\times\!10^{-3}$ & $2.2$
			& $4.686\!\times\!10^{-3}$ & $2.1$
			& $3.903\!\times\!10^{-5}$ & $3.2$ \\
			\addlinespace
			$1/128$
			& $5.616\!\times\!10^{-7}$ & $3.3$
			& $8.315\!\times\!10^{-5}$ & $2.4$
			& $1.590\!\times\!10^{-6}$ & $3.1$
			& $2.524\!\times\!10^{-4}$ & $2.1$
			& $3.499\!\times\!10^{-4}$ & $2.2$
			& $3.543\!\times\!10^{-4}$ & $2.0$
			& $1.125\!\times\!10^{-3}$ & $2.1$
			& $4.629\!\times\!10^{-6}$ & $3.1$ \\
			\bottomrule
		\end{tabular}
	}
\end{table}
\noindent
The results are nearly identical for both parameter values
and remain fully consistent with the spatial convergence
behavior observed in Table~\ref{tab:spatial}. In particular,
the convergence rates of all variables are essentially
unaffected by the large increase in $\lambda_p$, and no
deterioration of the displacement approximation is observed
in the nearly incompressible regime. These results provide
numerical evidence that the proposed formulation remains
stable and accurate as $\lambda_p$ becomes large, in
agreement with the estimates of
Theorems~\ref{thm:BDF2_energy_stability} and
\ref{thm:BDF2_convergence}.

\noindent
We next investigate the influence of the permeability
coefficient $\kappa$, which controls the rate of pore-fluid
flow through the Darcy term $(\kappa/\mu_f)\nabla p_p$.
To assess the robustness of the method across different
flow regimes, we consider both a moderate permeability
$\kappa=10^{-1}$ and a nearly impermeable case $\kappa=10^{-6}$.
The remaining parameters are fixed at $\nu_p=0.3$, $s_0=1$, and $\alpha=1$ unless stated otherwise.
\begingroup
\setlength{\intextsep}{0pt}
\begin{table}[H]
	\centering
	\caption{Parameter test for $\kappa$:
		$\kappa = 10^{-1}$, $\nu_p = 0.3$,
		$s_0 = 1$, $\alpha = 1$,
		$\Delta t = 10^{-3}$, $T = 0.5$.}
	\label{tab:robust_kappa}
	\footnotesize
	\setlength{\tabcolsep}{3pt}
	\renewcommand{\arraystretch}{1.2}
	\resizebox{\textwidth}{!}{
		\begin{tabular}{c cc cc cc cc cc cc cc cc}
			\toprule
			$h$
			& $\|e_{\bm u_f}\|_{0,\Omf}$ & $r$
			& $\|e_{\bm u_f}\|_{1,\Omf}$ & $r$
			& $\|e_{\bm\eta}\|_{0,\Omp}$ & $r$
			& $\|e_{\bm\eta}\|_{1,\Omp}$ & $r$
			& $\|e_{p_f}\|_{0,\Omf}$ & $r$
			& $\|e_{\xi}\|_{0,\Omp}$ & $r$
			& $\|e_{p_p}\|_{1,\Omp}$ & $r$
			& $\|e_{p_p}\|_{0,\Omp}$ & $r$ \\
			\midrule
			$1/16$
			& $4.819\!\times\!10^{-4}$ & $-$
			& $1.019\!\times\!10^{-2}$ & $-$
			& $2.424\!\times\!10^{-3}$ & $-$
			& $2.989\!\times\!10^{-2}$ & $-$
			& $4.428\!\times\!10^{-2}$ & $-$
			& $4.197\!\times\!10^{-2}$ & $-$
			& $1.395\!\times\!10^{-1}$ & $-$
			& $2.804\!\times\!10^{-2}$ & $-$ \\
			\addlinespace
			$1/32$
			& $7.737\!\times\!10^{-5}$ & $2.6$
			& $2.177\!\times\!10^{-3}$ & $2.2$
			& $7.151\!\times\!10^{-4}$ & $1.8$
			& $5.694\!\times\!10^{-3}$ & $2.4$
			& $1.018\!\times\!10^{-2}$ & $2.1$
			& $8.629\!\times\!10^{-3}$ & $2.3$
			& $3.029\!\times\!10^{-2}$ & $2.2$
			& $6.064\!\times\!10^{-3}$ & $2.2$ \\
			\addlinespace
			$1/64$
			& $1.462\!\times\!10^{-5}$ & $2.4$
			& $4.481\!\times\!10^{-4}$ & $2.3$
			& $1.756\!\times\!10^{-4}$ & $2.0$
			& $1.245\!\times\!10^{-3}$ & $2.2$
			& $2.413\!\times\!10^{-3}$ & $2.1$
			& $1.960\!\times\!10^{-3}$ & $2.1$
			& $6.982\!\times\!10^{-3}$ & $2.1$
			& $1.403\!\times\!10^{-3}$ & $2.1$ \\
			\addlinespace
			$1/128$
			& $3.580\!\times\!10^{-6}$ & $2.0$
			& $9.065\!\times\!10^{-5}$ & $2.3$
			& $4.828\!\times\!10^{-5}$ & $1.9$
			& $3.100\!\times\!10^{-4}$ & $2.0$
			& $6.113\!\times\!10^{-4}$ & $2.0$
			& $4.978\!\times\!10^{-4}$ & $2.0$
			& $1.740\!\times\!10^{-3}$ & $2.0$
			& $3.611\!\times\!10^{-4}$ & $2.0$ \\
			\bottomrule
		\end{tabular}
	}
\end{table}
\endgroup
\noindent
The results show optimal spatial convergence for all
variables throughout the refinement sequence.
In particular, the pore pressure $p_p$ converges at the
expected rate in both $L^2(\Omega_p)$ and
$H^1(\Omega_p)$, and no deterioration relative to the
reference case $\kappa=1$ is observed.
These results indicate that the proposed formulation
remains stable and accurate for moderate permeability
values, with convergence behavior essentially identical
to that observed in the reference configuration.

\begin{remark}[Parameter dependence of the stability estimate]
	\label{rem:param_dependence}
	As discussed in Remark~\ref{rem:param_robust}, the stability estimate
	\eqref{eq:BDF2_energy_estimate_clean} is robust with respect to
	$\lambda_p$ and $s_0$ only in the corresponding parameter-weighted
	energy norms involving the combined pressure
	$Y=\alpha p_p-\xi$. The permeability $\kappa$ enters explicitly through
	the weighted Darcy and Beavers--Joseph--Saffman dissipation terms, so no
	uniform estimate in standard unweighted norms is asserted as
	$\kappa\to0$. The reduced pore-pressure accuracy observed in the
	low-permeability experiments is therefore consistent with the
	parameter-weighted stability theory and does not contradict
	\eqref{eq:BDF2_energy_estimate_clean}. 
\end{remark}

To further challenge the method, we consider the
low-permeability regime $\kappa=10^{-6}$, for which the
Darcy diffusion term becomes extremely small.
In this regime the pore-pressure equation develops a
singularly perturbed character, and sharp pressure
gradients may arise near the boundary.
Table~\ref{tab:robust_kappa01} reports the results for
both $\nu_p=0.3$ and $\nu_p=0.499$.
\begin{table}[H]
	\centering
	\caption{Parameter test for $\kappa$:
		$\kappa = 10^{-6}$, $\nu_p \in \{0.3,\, 0.499\}$,
		$s_0 = 1$, $\alpha = 1$,
		$\Delta t = 10^{-3}$, $T = 0.5$.}
	\label{tab:robust_kappa01}
	\footnotesize
	\setlength{\tabcolsep}{3pt}
	\renewcommand{\arraystretch}{1.2}
	\resizebox{\textwidth}{!}{
		\begin{tabular}{c cc cc cc cc cc cc cc cc}
			\toprule
			\multicolumn{17}{c}{$\nu_p = 0.3$,\quad
				$\lambda_p = 0.577$,\quad $\kappa = 10^{-6}$} \\
			\midrule
			$h$
			& $\|e_{\bm u_f}\|_{0,\Omf}$ & $r$
			& $\|e_{\bm u_f}\|_{1,\Omf}$ & $r$
			& $\|e_{\bm\eta}\|_{0,\Omp}$ & $r$
			& $\|e_{\bm\eta}\|_{1,\Omp}$ & $r$
			& $\|e_{p_f}\|_{0,\Omf}$ & $r$
			& $\|e_{\xi}\|_{0,\Omp}$ & $r$
			& $\|e_{p_p}\|_{1,\Omp}$ & $r$
			& $\|e_{p_p}\|_{0,\Omp}$ & $r$ \\
			\midrule
			$1/16$
			& $4.471\!\times\!10^{-4}$ & $-$
			& $1.001\!\times\!10^{-2}$ & $-$
			& $2.540\!\times\!10^{-3}$ & $-$
			& $3.157\!\times\!10^{-2}$ & $-$
			& $6.310\!\times\!10^{-2}$ & $-$
			& $6.171\!\times\!10^{-2}$ & $-$
			& $4.087\!\times\!10^{-1}$ & $-$
			& $5.590\!\times\!10^{-2}$ & $-$ \\
			\addlinespace
			$1/32$
			& $5.949\!\times\!10^{-5}$ & $2.9$
			& $2.107\!\times\!10^{-3}$ & $2.2$
			& $6.574\!\times\!10^{-4}$ & $2.0$
			& $6.012\!\times\!10^{-3}$ & $2.4$
			& $1.374\!\times\!10^{-2}$ & $2.2$
			& $1.254\!\times\!10^{-2}$ & $2.3$
			& $1.327\!\times\!10^{-1}$ & $1.6$
			& $1.163\!\times\!10^{-2}$ & $2.3$ \\
			\addlinespace
			$1/64$
			& $9.305\!\times\!10^{-6}$ & $2.7$
			& $4.327\!\times\!10^{-4}$ & $2.3$
			& $1.481\!\times\!10^{-4}$ & $2.2$
			& $1.258\!\times\!10^{-3}$ & $2.3$
			& $3.239\!\times\!10^{-3}$ & $2.1$
			& $2.886\!\times\!10^{-3}$ & $2.1$
			& $5.261\!\times\!10^{-2}$ & $1.3$
			& $2.677\!\times\!10^{-3}$ & $2.1$ \\
			\addlinespace
			$1/128$
			& $1.790\!\times\!10^{-6}$ & $2.4$
			& $8.473\!\times\!10^{-5}$ & $2.4$
			& $4.005\!\times\!10^{-5}$ & $1.9$
			& $3.103\!\times\!10^{-4}$ & $2.0$
			& $8.320\!\times\!10^{-4}$ & $2.0$
			& $7.375\!\times\!10^{-4}$ & $2.0$
			& $2.456\!\times\!10^{-2}$ & $1.1$
			& $6.880\!\times\!10^{-4}$ & $2.0$ \\
			\midrule
			\multicolumn{17}{c}{$\nu_p = 0.499$,\quad
				$\lambda_p = 166.4$,\quad $\kappa = 10^{-6}$} \\
			\midrule
			$h$
			& $\|e_{\bm u_f}\|_{0,\Omf}$ & $r$
			& $\|e_{\bm u_f}\|_{1,\Omf}$ & $r$
			& $\|e_{\bm\eta}\|_{0,\Omp}$ & $r$
			& $\|e_{\bm\eta}\|_{1,\Omp}$ & $r$
			& $\|e_{p_f}\|_{0,\Omf}$ & $r$
			& $\|e_{\xi}\|_{0,\Omp}$ & $r$
			& $\|e_{p_p}\|_{1,\Omp}$ & $r$
			& $\|e_{p_p}\|_{0,\Omp}$ & $r$ \\
			\midrule
			$1/16$
			& $4.343\!\times\!10^{-4}$ & $-$
			& $9.939\!\times\!10^{-3}$ & $-$
			& $1.410\!\times\!10^{-3}$ & $-$
			& $3.073\!\times\!10^{-2}$ & $-$
			& $3.323\!\times\!10^{-2}$ & $-$
			& $3.308\!\times\!10^{-2}$ & $-$
			& $1.005\!\times\!10^{-1}$ & $-$
			& $2.566\!\times\!10^{-3}$ & $-$ \\
			\addlinespace
			$1/32$
			& $5.201\!\times\!10^{-5}$ & $3.1$
			& $2.083\!\times\!10^{-3}$ & $2.3$
			& $1.303\!\times\!10^{-4}$ & $3.4$
			& $5.279\!\times\!10^{-3}$ & $2.5$
			& $6.812\!\times\!10^{-3}$ & $2.3$
			& $6.439\!\times\!10^{-3}$ & $2.4$
			& $2.437\!\times\!10^{-2}$ & $2.0$
			& $3.663\!\times\!10^{-4}$ & $2.8$ \\
			\addlinespace
			$1/64$
			& $5.425\!\times\!10^{-6}$ & $3.3$
			& $4.251\!\times\!10^{-4}$ & $2.3$
			& $1.364\!\times\!10^{-5}$ & $3.3$
			& $1.076\!\times\!10^{-3}$ & $2.3$
			& $1.596\!\times\!10^{-3}$ & $2.1$
			& $1.451\!\times\!10^{-3}$ & $2.1$
			& $7.604\!\times\!10^{-3}$ & $1.7$
			& $5.956\!\times\!10^{-5}$ & $2.6$ \\
			\addlinespace
			$1/128$
			& $5.737\!\times\!10^{-7}$ & $3.2$
			& $8.295\!\times\!10^{-5}$ & $2.4$
			& $2.077\!\times\!10^{-6}$ & $2.7$
			& $2.513\!\times\!10^{-4}$ & $2.1$
			& $5.282\!\times\!10^{-4}$ & $1.6$
			& $5.133\!\times\!10^{-4}$ & $1.5$
			& $5.093\!\times\!10^{-3}$ & $0.6$
			& $2.448\!\times\!10^{-5}$ & $1.3$ \\
			\bottomrule
		\end{tabular}
	}
\end{table}
\noindent
The low-permeability results differ substantially from the
moderate-permeability regime. While the fluid velocity
$\bm u_f$ and the solid displacement $\bm\eta$
continue to exhibit nearly optimal convergence rates,
the pore-pressure approximation deteriorates as the mesh
is refined. For $\nu_p=0.3$, the convergence rate of
$\|e_{p_p}\|_{1,\Omp}$ decreases from approximately
$1.6$ to $1.1$. For the nearly incompressible case
$\nu_p=0.499$, the deterioration is more pronounced,
with reduced and non-monotone convergence behavior
appearing on the finest meshes.
These observations are consistent with
Remark~\ref{rem:param_dependence}. The stability theory
is robust with respect to the nearly incompressible
parameter $\lambda_p$, but retains an explicit
dependence on $\kappa$ through the parameter-weighted
Darcy and BJS dissipation terms. Consequently, when
$\kappa$ is extremely small, pre-asymptotic effects and
reduced pressure convergence may occur without violating
the theoretical stability estimate.
These results are consistent with the parameter-weighted
structure of the stability estimate and suggest that the
low-permeability regime remains numerically challenging.
Although stability is preserved, the results in Table~\ref{tab:robust_kappa01} show that the pressure variables exhibit a noticeable loss of accuracy and reduced
convergence rates as $\kappa$ becomes very small.
Consequently, resolving extremely low-permeability
problems may require finer meshes or additional
discretization techniques to recover the asymptotic
convergence behavior predicted for moderate values of
$\kappa$.
\noindent

We then turn to examine robustness with respect to the storage coefficient $s_0 \geq 0$. The singular limit $s_0 = 0$, in which the storage term vanishes and \eqref{eq:biot_total} becomes a quasi-static constraint with no time derivative
of $p_p$, is the most challenging case.

\begingroup
\setlength{\intextsep}{0pt}
\begin{table}[H]
	\centering
	\caption{Parameter test for $s_0$:
		$s_0 = 0$, $\nu_p = 0.3$, $\kappa = 1$,
		$\alpha = 1$, $\Delta t = 10^{-3}$, $T = 0.5$.}
	\label{tab:robust_s0}
	\footnotesize
	\setlength{\tabcolsep}{3pt}
	\renewcommand{\arraystretch}{1.2}
	\resizebox{\textwidth}{!}{
		\begin{tabular}{c cc cc cc cc cc cc cc cc}
			\toprule
			$h$
			& $\|e_{\bm u_f}\|_{0,\Omf}$ & $r$
			& $\|e_{\bm u_f}\|_{1,\Omf}$ & $r$
			& $\|e_{\bm\eta}\|_{0,\Omp}$ & $r$
			& $\|e_{\bm\eta}\|_{1,\Omp}$ & $r$
			& $\|e_{p_f}\|_{0,\Omf}$ & $r$
			& $\|e_{\xi}\|_{0,\Omp}$ & $r$
			& $\|e_{p_p}\|_{1,\Omp}$ & $r$
			& $\|e_{p_p}\|_{0,\Omp}$ & $r$ \\
			\midrule
			$1/16$
			& $4.316\!\times\!10^{-4}$ & $-$
			& $9.947\!\times\!10^{-3}$ & $-$
			& $1.299\!\times\!10^{-3}$ & $-$
			& $2.705\!\times\!10^{-2}$ & $-$
			& $3.323\!\times\!10^{-2}$ & $-$
			& $3.297\!\times\!10^{-2}$ & $-$
			& $8.561\!\times\!10^{-2}$ & $-$
			& $2.806\!\times\!10^{-3}$ & $-$ \\
			\addlinespace
			$1/32$
			& $5.231\!\times\!10^{-5}$ & $3.0$
			& $2.093\!\times\!10^{-3}$ & $2.2$
			& $1.144\!\times\!10^{-4}$ & $3.5$
			& $4.602\!\times\!10^{-3}$ & $2.6$
			& $6.808\!\times\!10^{-3}$ & $2.3$
			& $6.432\!\times\!10^{-3}$ & $2.4$
			& $2.013\!\times\!10^{-2}$ & $2.1$
			& $3.582\!\times\!10^{-4}$ & $3.0$ \\
			\addlinespace
			$1/64$
			& $5.426\!\times\!10^{-6}$ & $3.3$
			& $4.260\!\times\!10^{-4}$ & $2.3$
			& $1.209\!\times\!10^{-5}$ & $3.2$
			& $9.407\!\times\!10^{-4}$ & $2.3$
			& $1.567\!\times\!10^{-3}$ & $2.1$
			& $1.424\!\times\!10^{-3}$ & $2.2$
			& $4.686\!\times\!10^{-3}$ & $2.1$
			& $3.904\!\times\!10^{-5}$ & $3.2$ \\
			\addlinespace
			$1/128$
			& $5.603\!\times\!10^{-7}$ & $3.3$
			& $8.315\!\times\!10^{-5}$ & $2.4$
			& $1.515\!\times\!10^{-6}$ & $3.0$
			& $2.189\!\times\!10^{-4}$ & $2.1$
			& $3.500\!\times\!10^{-4}$ & $2.2$
			& $3.542\!\times\!10^{-4}$ & $2.0$
			& $1.125\!\times\!10^{-3}$ & $2.1$
			& $4.676\!\times\!10^{-6}$ & $3.1$ \\
			\bottomrule
		\end{tabular}
	}
\end{table}
\endgroup

Table~\ref{tab:robust_s0} reports results for
$s_0 = 0$ with $\nu_p = 0.3$ and $\kappa = 1$.
The results are nearly indistinguishable from those
obtained for $s_0=1$.
All variables retain optimal convergence rates across
the entire refinement sequence, indicating that the
method remains stable and accurate in the singular
storage limit $s_0=0$.
\noindent

We finally investigate the influence of the fluid
viscosity $\mu_f$, which scales the viscous stress
term $2\mu_f\bm D(\bm u_f)$ in the Stokes equations.
As $\mu_f$ decreases, viscous dissipation becomes weaker,
making the fluid subproblem increasingly sensitive to
pressure gradients and interface coupling effects.
To assess the sensitivity of the method to this
parameter, we consider the values
$\mu_f=10^{-1}$ and $\mu_f=10^{-3}$ while keeping
all other parameters fixed.
\begin{table}[H]
	\centering
	\caption{Parameter test for $\mu_f$:
		$\mu_f \in \{10^{-1},\, 10^{-3}\}$,
		$\nu_p = 0.3$, $\kappa = 1$, $s_0 = 1$,
		$\alpha = 1$, $\Delta t = 10^{-3}$, $T = 0.5$.}
	\label{tab:robust_muf}
	\footnotesize
	\setlength{\tabcolsep}{3pt}
	\renewcommand{\arraystretch}{1.2}
	\resizebox{\textwidth}{!}{
		\begin{tabular}{c cc cc cc cc cc cc cc cc}
			\toprule
			\multicolumn{17}{c}{$\mu_f = 10^{-1}$} \\
			\midrule
			$h$
			& $\|e_{\bm u_f}\|_{0,\Omf}$ & $r$
			& $\|e_{\bm u_f}\|_{1,\Omf}$ & $r$
			& $\|e_{\bm\eta}\|_{0,\Omp}$ & $r$
			& $\|e_{\bm\eta}\|_{1,\Omp}$ & $r$
			& $\|e_{p_f}\|_{0,\Omf}$ & $r$
			& $\|e_{\xi}\|_{0,\Omp}$ & $r$
			& $\|e_{p_p}\|_{1,\Omp}$ & $r$
			& $\|e_{p_p}\|_{0,\Omp}$ & $r$ \\
			\midrule
			$1/16$
			& $4.266\!\times\!10^{-3}$ & $-$
			& $9.848\!\times\!10^{-2}$ & $-$
			& $1.298\!\times\!10^{-3}$ & $-$
			& $2.706\!\times\!10^{-2}$ & $-$
			& $3.322\!\times\!10^{-2}$ & $-$
			& $3.297\!\times\!10^{-2}$ & $-$
			& $8.557\!\times\!10^{-2}$ & $-$
			& $2.781\!\times\!10^{-3}$ & $-$ \\
			\addlinespace
			$1/32$
			& $5.207\!\times\!10^{-4}$ & $3.0$
			& $2.086\!\times\!10^{-2}$ & $2.2$
			& $1.143\!\times\!10^{-4}$ & $3.5$
			& $4.602\!\times\!10^{-3}$ & $2.6$
			& $6.808\!\times\!10^{-3}$ & $2.3$
			& $6.433\!\times\!10^{-3}$ & $2.4$
			& $2.013\!\times\!10^{-2}$ & $2.1$
			& $3.588\!\times\!10^{-4}$ & $3.0$ \\
			\addlinespace
			$1/64$
			& $5.419\!\times\!10^{-5}$ & $3.3$
			& $4.257\!\times\!10^{-3}$ & $2.3$
			& $1.195\!\times\!10^{-5}$ & $3.3$
			& $9.406\!\times\!10^{-4}$ & $2.3$
			& $1.567\!\times\!10^{-3}$ & $2.1$
			& $1.424\!\times\!10^{-3}$ & $2.2$
			& $4.686\!\times\!10^{-3}$ & $2.1$
			& $3.906\!\times\!10^{-5}$ & $3.2$ \\
			\addlinespace
			$1/128$
			& $5.435\!\times\!10^{-6}$ & $3.3$
			& $8.313\!\times\!10^{-4}$ & $2.4$
			& $1.384\!\times\!10^{-6}$ & $3.1$
			& $2.189\!\times\!10^{-4}$ & $2.1$
			& $3.499\!\times\!10^{-4}$ & $2.2$
			& $3.542\!\times\!10^{-4}$ & $2.0$
			& $1.125\!\times\!10^{-3}$ & $2.1$
			& $4.602\!\times\!10^{-6}$ & $3.1$ \\
			\midrule
			\multicolumn{17}{c}{$\mu_f = 10^{-3}$} \\
			\midrule
			$h$
			& $\|e_{\bm u_f}\|_{0,\Omf}$ & $r$
			& $\|e_{\bm u_f}\|_{1,\Omf}$ & $r$
			& $\|e_{\bm\eta}\|_{0,\Omp}$ & $r$
			& $\|e_{\bm\eta}\|_{1,\Omp}$ & $r$
			& $\|e_{p_f}\|_{0,\Omf}$ & $r$
			& $\|e_{\xi}\|_{0,\Omp}$ & $r$
			& $\|e_{p_p}\|_{1,\Omp}$ & $r$
			& $\|e_{p_p}\|_{0,\Omp}$ & $r$ \\
			\midrule
			$1/16$
			& $1.301\!\times\!10^{-1}$ & $-$
			& $3.388\!\times\!10^{0}$ & $-$
			& $1.299\!\times\!10^{-3}$ & $-$
			& $2.706\!\times\!10^{-2}$ & $-$
			& $3.452\!\times\!10^{-2}$ & $-$
			& $3.297\!\times\!10^{-2}$ & $-$
			& $8.557\!\times\!10^{-2}$ & $-$
			& $2.780\!\times\!10^{-3}$ & $-$ \\
			\addlinespace
			$1/32$
			& $3.178\!\times\!10^{-2}$ & $2.0$
			& $1.415\!\times\!10^{0}$ & $1.3$
			& $1.143\!\times\!10^{-4}$ & $3.5$
			& $4.602\!\times\!10^{-3}$ & $2.6$
			& $6.863\!\times\!10^{-3}$ & $2.3$
			& $6.433\!\times\!10^{-3}$ & $2.4$
			& $2.013\!\times\!10^{-2}$ & $2.1$
			& $3.589\!\times\!10^{-4}$ & $3.0$ \\
			\addlinespace
			$1/64$
			& $4.832\!\times\!10^{-3}$ & $2.7$
			& $3.903\!\times\!10^{-1}$ & $1.9$
			& $1.195\!\times\!10^{-5}$ & $3.3$
			& $9.406\!\times\!10^{-4}$ & $2.3$
			& $1.567\!\times\!10^{-3}$ & $2.1$
			& $1.424\!\times\!10^{-3}$ & $2.2$
			& $4.686\!\times\!10^{-3}$ & $2.1$
			& $3.906\!\times\!10^{-5}$ & $3.2$ \\
			\addlinespace
			$1/128$
			& $5.101\!\times\!10^{-4}$ & $3.2$
			& $8.137\!\times\!10^{-2}$ & $2.3$
			& $1.384\!\times\!10^{-6}$ & $3.1$
			& $2.189\!\times\!10^{-4}$ & $2.1$
			& $3.499\!\times\!10^{-4}$ & $2.2$
			& $3.542\!\times\!10^{-4}$ & $2.0$
			& $1.125\!\times\!10^{-3}$ & $2.1$
			& $4.603\!\times\!10^{-6}$ & $3.1$ \\
			\bottomrule
		\end{tabular}
	}
\end{table}
The results in Table~\ref{tab:robust_muf} show that the convergence behavior of the method is largely unaffected by variations in
the fluid viscosity. For $\mu_f=10^{-1}$, all variables exhibit the same spatial convergence rates observed in the reference
configuration.

For $\mu_f=10^{-3}$, the magnitude of the fluid
velocity error increases, particularly in the
$H^1(\Omega_f)$ norm, reflecting the reduced
viscous regularization of the Stokes subsystem.
Nevertheless, the convergence rates recover under
mesh refinement and approach their asymptotic values.
The poroelastic variables remain essentially
unchanged, indicating that the coupled formulation
does not exhibit sensitivity to small fluid viscosity.

Taken together, the parameter studies demonstrate that the proposed monolithic BDF2 total-pressure formulation is highly robust with respect to the material and coupling parameters governing the Stokes--Biot system. The convergence behavior remains essentially unchanged over a wide range of values of the Lam\'e parameter $\lambda_p$, including the nearly incompressible limit $\nu_p \rightarrow 0.5$, confirming the locking-free character of the total-pressure formulation. Similarly, the method retains optimal spatial convergence in the singular storage limit $s_0=0$, indicating robustness with respect to fluid compressibility. Variations in the fluid viscosity $\mu_f$ affect the magnitude of the fluid velocity error, particularly for very small viscosities, but do not alter the asymptotic convergence behavior of either the fluid or poroelastic variables. In contrast, the permeability parameter $\kappa$ has a more pronounced influence. For moderate permeabilities, such as $\kappa=10^{-1}$ and $\kappa=1$, all variables exhibit the predicted convergence rates. However, when $\kappa$ is reduced to $10^{-6}$, the pore-pressure approximation deteriorates, and optimal convergence is no longer observed, while the remaining variables continue to converge satisfactorily. This behavior is consistent with the singularly perturbed nature of the Darcy operator in the low-permeability regime and suggests that additional mesh refinement or adaptive resolution may be required to accurately capture the resulting pore-pressure boundary layers. Overall, the numerical results provide strong support for the theoretical stability and convergence analysis. The experiments confirm robustness with respect to $\lambda_p$, $s_0$, and $\mu_f$, while indicating that additional spatial resolution may be required in the extreme low-permeability regime to accurately resolve the pore-pressure solution.
\section{Conclusions}
\label{sec:conclusions}
In this work, we developed and analyzed a fully implicit monolithic BDF2 finite element method for the coupled time-dependent Stokes--Biot system in a three-field total-pressure formulation. The proposed method advances the fluid velocity, fluid pressure, solid displacement, total pressure, and pore pressure simultaneously at each time step while enforcing all interface conditions implicitly. In contrast to partitioned approaches, the formulation does not require operator splitting, interface extrapolation, or time-step restrictions.

The theoretical analysis established discrete energy stability of the fully discrete scheme through the BDF2 $G$-stability framework. Stability estimates were shown to be robust with respect to the physical parameters, including the Lam\'e parameter, the Biot--Willis coefficient, and the storage coefficient. Furthermore, optimal a priori error estimates were derived, establishing second-order convergence in time together with optimal spatial convergence.

The numerical experiments strongly support the theoretical results. Temporal convergence studies confirmed the expected second-order accuracy of the BDF2 discretization, while spatial convergence studies recovered the optimal finite element rates predicted by the analysis. Additional parameter studies demonstrated robustness with respect to variations in the Lam\'e parameter, including the nearly incompressible regime, as well as robustness with respect to the storage coefficient and fluid viscosity. For moderate permeability values, the method maintained its optimal convergence behavior and stability properties. In the extreme low-permeability regime, however, deterioration of the pore-pressure approximation was observed, indicating the presence of unresolved pressure layers and highlighting the increased spatial resolution required in singularly perturbed Darcy-dominated problems.

Overall, the results demonstrate that the proposed monolithic BDF2 total-pressure formulation provides a stable, accurate, and reliable framework for the numerical simulation of coupled fluid--poroelastic interaction problems across a broad range of physically relevant parameter regimes. The combination of rigorous analysis and comprehensive numerical verification confirms its effectiveness for challenging Stokes--Biot applications.

Future work will focus on the development of efficient parameter-robust preconditioners with Multigrid solvers \cite{ChenChen2021} for the fully coupled algebraic systems, mortar methods that use non-matching meshes in different subdomains \cite{Ambartsumyan2018, Huang2017mortar}, adaptive mesh refinement strategies for heterogeneous and low-permeability media, higher-order temporal discretizations, and extensions to nonlinear and multiphysics poroelastic models arising in geomechanics, biomechanics, and subsurface flow applications.

\section*{Acknowledgements}
The work of TA and MC is partially supported by the I-GAP Small Tech Transfer Grant I-GAP OTT-STT-272, sponsored by the Office of Technology Transfer at Morgan State University, and the support of The University of Maryland Baltimore Life Science Discovery (UM-BILD) Accelerator and REACH Hub Award 1U01GM152511-03.


\begin{thebibliography}{99}
	
	\bibitem{Ambartsumyan2018}
	I.~Ambartsumyan, E.~Khattatov, I.~Yotov, and P.~Zunino,
	\newblock A Lagrange multiplier method for a Stokes--Biot fluid--poroelastic structure interaction model,
	\newblock {\em Numerische Mathematik}, 140(2):513--553, 2018.
	
	\bibitem{BadiaQuainiQuarteroni2009}
	S.~Badia, A.~Quaini, and A.~Quarteroni,
	\newblock Coupling Biot and Navier--Stokes equations for modelling fluid--poroelastic media interaction,
	\newblock {\em Journal of Computational Physics}, 228(21):7986--8014, 2009.
	
	\bibitem{Biot1941}
	M.~A.~Biot,
	\newblock General theory of three-dimensional consolidation,
	\newblock {\em Journal of Applied Physics}, 12(2):155--164, 1941.
	
	\bibitem{Biot1956}
	M.~A.~Biot,
	\newblock Theory of propagation of elastic waves in a fluid-saturated porous solid,
	\newblock {\em Journal of the Acoustical Society of America}, 28(2):168--178, 1956.
	
	\bibitem{BoffiBrezziFortin2013}
	D.~Boffi, F.~Brezzi, and M.~Fortin,
	\newblock {\em Mixed Finite Element Methods and Applications},
	\newblock Springer, 2013.
	
	\bibitem{BukacYotov2015}
	M.~Buka\v{c}, I.~Yotov, R.~Zakerzadeh, and P.~Zunino,
	\newblock Partitioning strategies for the interaction of a fluid with a poroelastic material,
	\newblock {\em Computer Methods in Applied Mechanics and Engineering}, 292:138--170, 2015.
	
	\bibitem{CesmeliogluChidyagwai2020}
	A.~Cesmelioglu and P.~Chidyagwai,
	\newblock Numerical analysis of the coupling of free fluid with a poroelastic material,
	\newblock {\em Numerical Methods for Partial Differential Equations}, 36(3):463--494, 2020.
	
	\bibitem{Chen2013}
	W.~Chen, M.~Gunzburger, D.~Sun, and X.~Wang,
	\newblock Efficient and long-time accurate second-order methods for the Stokes--Darcy system,
	\newblock {\em SIAM Journal on Numerical Analysis}, 51(5):2563--2584, 2013.
	
	\bibitem{ChenChen2021}
	L.~Chen and Y.~Chen,
	\newblock Multigrid method for poroelasticity problem by finite element method,
	\newblock {\em Advances in Applied Mathematics and Mechanics},
	2021.
	doi: \href{https://doi.org/10.4208/aamm.OA-2019-0003}
	{10.4208/aamm.OA-2019-0003}.
	
	\bibitem{DiscacciatiQuarteroni2009}
	M.~Discacciati and A.~Quarteroni,
	\newblock Navier--Stokes/Darcy coupling: modeling, analysis, and numerical approximation,
	\newblock {\em Revista Matemática Complutense}, 22(2):315--426, 2009.
	
	\bibitem{GiraultRaviart1986}
	V.~Girault and P.-A.~Raviart,
	\newblock {\em Finite Element Methods for Navier--Stokes Equations: Theory and Algorithms},
	\newblock Springer-Verlag, Berlin, 1986.
	
	\bibitem{gu2023iterative}
	H.~Gu, M.~Cai, and J.~Li,
	\newblock An iterative decoupled algorithm with unconditional stability for Biot model,
	\newblock {\em Mathematics of Computation}, 92(341):1087--1108, 2023.
	
	\bibitem{gu2025convergence}
	H.~Gu, M.~Cai, and J.~Li,
	\newblock Convergence analysis of a global-in-time iterative decoupled algorithm for Biot’s model,
	\newblock {\em Advances in Applied Mathematics and Mechanics},
	17(3):778--803, 2025.
	
	\bibitem{GuoLi2026}
	L.~Guo and S.~Li,
	\newblock Stability and error estimates of a second-order decoupled FEM for Stokes--Biot,
	\newblock {\em Communications in Nonlinear Science and Numerical Simulation}, 158:109806, 2026.
	
	\bibitem{HairerWanner1996}
	E.~Hairer and G.~Wanner,
	\newblock {\em Solving Ordinary Differential Equations II},
	\newblock Springer, 1996.
	
	\bibitem{HeGuoFeng2024}
	L.~He, J.~Guo, and M.~Feng,
	\newblock Analysis of two discontinuous Galerkin finite element methods for the total pressure formulation of linear poroelasticity model,
	\newblock {\em Applied Numerical Mathematics},
	204:60--85, 2024.
	
	\bibitem{Heywood1990}
	J.~G.~Heywood and R.~Rannacher,
	\newblock Finite element approximation of the nonstationary Navier--Stokes problem IV,
	\newblock {\em SIAM Journal on Numerical Analysis}, 27(2):353--384, 1990.
	
	\bibitem{Huang2017mortar}
	P.~Huang, J.~Chen, and M.~Cai,
	\newblock A mortar method using nonconforming and mixed finite elements for the coupled Stokes--Darcy model,
	\newblock {\em Advances in Applied Mathematics and Mechanics},
	9(3):596--620, 2017.
	
	\bibitem{Lee2017}
	J.~J.~Lee, K.-A.~Mardal, and R.~Winther,
	\newblock Parameter-robust discretization and preconditioning of Biot's consolidation model,
	\newblock {\em SIAM Journal on Scientific Computing}, 39(1):A1--A24, 2017.
	
	\bibitem{Murad2004}
	M.~A.~Murad and A.~F.~D.~Loula,
	\newblock Improved accuracy in finite element analysis of Biot's consolidation problem,
	\newblock {\em Computer Methods in Applied Mechanics and Engineering}, 193(9--11):965--981, 2004.
	
	\bibitem{OyarzuaRuizBaier2016}
	R.~Oyarz\'{u}a and R.~Ruiz-Baier,
	\newblock Locking-free finite element methods for poroelasticity,
	\newblock {\em SIAM Journal on Numerical Analysis}, 54(5):2951--2973, 2016.
	
	\bibitem{OyekoleBukac2020}
	O.~Oyekole and M.~Buka\v{c},
	\newblock Second-order, loosely coupled methods for fluid--poroelastic interaction,
	\newblock {\em Numerical Methods for Partial Differential Equations}, 36(4):800--822, 2020.
	
	\bibitem{PhillipsWheeler2007}
	P.~J.~Phillips and M.~F.~Wheeler,
	\newblock A coupling of mixed and continuous Galerkin finite element methods for poroelasticity I,
	\newblock {\em Computational Geosciences}, 11:131--144, 2007.
	
	\bibitem{PhillipsWheeler2009}
	P.~J.~Phillips and M.~F.~Wheeler,
	\newblock A coupling of mixed and continuous Galerkin finite element methods for poroelasticity II,
	\newblock {\em Computational Geosciences}, 13:145--160, 2009.
	
	\bibitem{RuizBaier2022}
	R.~Ruiz-Baier, M.~Taffetani, H.~D.~Westermeyer, and I.~Yotov,
	\newblock The Biot--Stokes coupling using total pressure,
	\newblock {\em Computer Methods in Applied Mechanics and Engineering}, 389:114384, 2022.
	
	\bibitem{Thomee2006}
	V.~Thom\'ee,
	\newblock {\em Galerkin Finite Element Methods for Parabolic Problems},
	\newblock Springer, 2006.
	
	\bibitem{ZhouLiChen2024}
	M.~Zhou, R.~Li, and Z.~Chen,
	\newblock A discontinuous Galerkin method for a coupled Stokes--Biot problem,
	\newblock {\em Journal of Computational and Applied Mathematics}, 451:116086, 2024.
	
	\bibitem{ZhouLiZhangChen2026}
	B.~Zhou, R.~Li, C.-S.~Zhang, and Z.~Chen,
	\newblock Combined discontinuous and continuous Galerkin methods for fractured poroelastic media flow on polytopic grids,
	\newblock {\em Journal of Computational and Applied Mathematics}, 474:116986, 2026.
	
\end{thebibliography}
\end{document}